\definecolor{ultramarine}{RGB}{0,32,96}
\definecolor{DarkerGreen}{RGB}{0,170,0}
\definecolor{DarkerRed}{RGB}{170,0,0}
\definecolor{myRed}{rgb}{0.450385, 0.157961, 0.217975}
\definecolor{myBlue}{rgb}{0.139681, 0.311666, 0.550652}
\definecolor{myKcolor}{rgb}{0, 0.411765, 0.572549}
\definecolor{myEcolor}{rgb}{0, 0, 0}
\newcommand{\Fig}[1]{Fig.~\ref{#1}}
\NewDocumentCommand\sstr{o}{\IfNoValueTF{#1}{\sigma}{\sigma\ar{#1}}}
\newcommand{\scristr}{\bar{\sstr}}
\newcommand{\Gc}{G_{\l{c}}} 
\newcommand{\Gf}{G_{\l{f}}} 
\newcommand\res{\mathop{\hbox{\vrule height 7pt width .5pt depth 0pt
			\vrule height .5pt width 6pt depth 0pt}}\nolimits}
\newcommand\eps{\varepsilon}
\newcommand\R{\mathbb{R}}
\newcommand\calA{\mathcal{A}}
\newcommand\calL{\mathcal{L}}
\newcommand\calH{\mathcal{H}}
\newcommand\calulu{\mathfrak{U}}
\newtheorem{theorem}{Theorem}[section]
\newtheorem{proposition}[theorem]{Proposition}
\newtheorem{lemma}[theorem]{Lemma}
\newtheorem{remark}[theorem]{Remark}
\newtheorem{corollary}[theorem]{Corollary}
\numberwithin{equation}{section}
\newcommand\Functeps{\mathcal F_\eps}
\newcommand\dx{{\mathrm d}x}
\newcommand\dt{{\mathrm d}t}
\newcommand\dd{{\mathrm d}}
\newcommand\PotDann{\omega}
\newcommand\FailureS{\overline{\varsigma}}
\newcommand{\hsigmabarra}{h_{\FailureS}}
\newcommand\sfratt{s_{\mathrm{frac}}}
\newcommand\Functu{\mathscr{F}}
\newcommand\hatf{l}
\newcommand\hatk{\vartheta}
\begin{document}


\begin{center}
  {\Large
{Phase-field modelling of cohesive fracture. Part II: Reconstruction of the cohesive law}}\\[5mm]
{\today}\\[5mm]
Roberto Alessi$^{1}$, Francesco Colasanto$^{2}$ and Matteo Focardi$^{2}$ \\[2mm]
{\em $^{1}$ DICI, Universit\`a di Pisa\\ 56122, Pisa, Italy}\\[1mm]
{\em $^{2}$ DiMaI, Universit\`a di Firenze\\ 50134 Firenze, Italy}
\\[3mm]
    \begin{minipage}[c]{0.8\textwidth}



This is the second paper of a three-part work the main aim of which is to provide a unified consistent framework for the
phase-field modelling of cohesive fracture.
Building on the theoretical foundations of the first paper, where {$\Gamma$-convergence} results have been derived, this second paper presents a systematic procedure for constructing phase-field models that reproduce prescribed cohesive laws. By either selecting the degradation function and determining the damage potential or vice versa, we enable the derivation of multiple phase-field models that exhibit the same cohesive fracture behavior but differ in their localized phase-field evolution. This methodology provides a flexible and rigorous strategy for tailoring phase-field models to specific cohesive responses, as shown by the several examples worked out.
The mechanical responses associated with these examples, highlighting their features and validating the theoretical results, are investigated in the third paper from a more engineering-oriented and applied perspective.

\end{minipage}
\end{center}


\tableofcontents



\section{Introduction}
\label{sec_cpflit}
\FloatBarrier

\subsection{Background and Motivation}

Accurately predicting fracture initiation and propagation is crucial for designing resilient mechanical systems and preventing structural failures. Griffith’s brittle fracture theory \cite{Griffith1921}, despite its simplicity and widespread use, assumes that fracture energy dissipates entirely upon crack formation, neglecting any interaction between crack surfaces (\Fig{fig_BCL}-a). This leads to two major limitations: the inability to predict crack initiation in a pristine material \cite{marigo2010,Tanne2018,Kumar2020} and unrealistic scale effects \cite{Marigo2023}.  

To address these shortcomings, cohesive fracture models introduce nonzero forces between crack surfaces, the cohesive forces, making the surface energy density a function of the displacement jump. Originally proposed by Dugdale and Barenblatt \cite{Dugdale1960,Barenblatt1962}, these models incorporate a critical stress and a characteristic length, offering a more realistic fracture representation (\Fig{fig_BCL}-b). Building on this foundation, the variational formulation of fracture, first developed for brittle materials in \cite{Marigo1998,Braides95}, has been extended to cohesive models in \cite{Bourdin2008}.


\begin{figure}[h!]
 \centering
 \small
  \includegraphics[width=\linewidth]{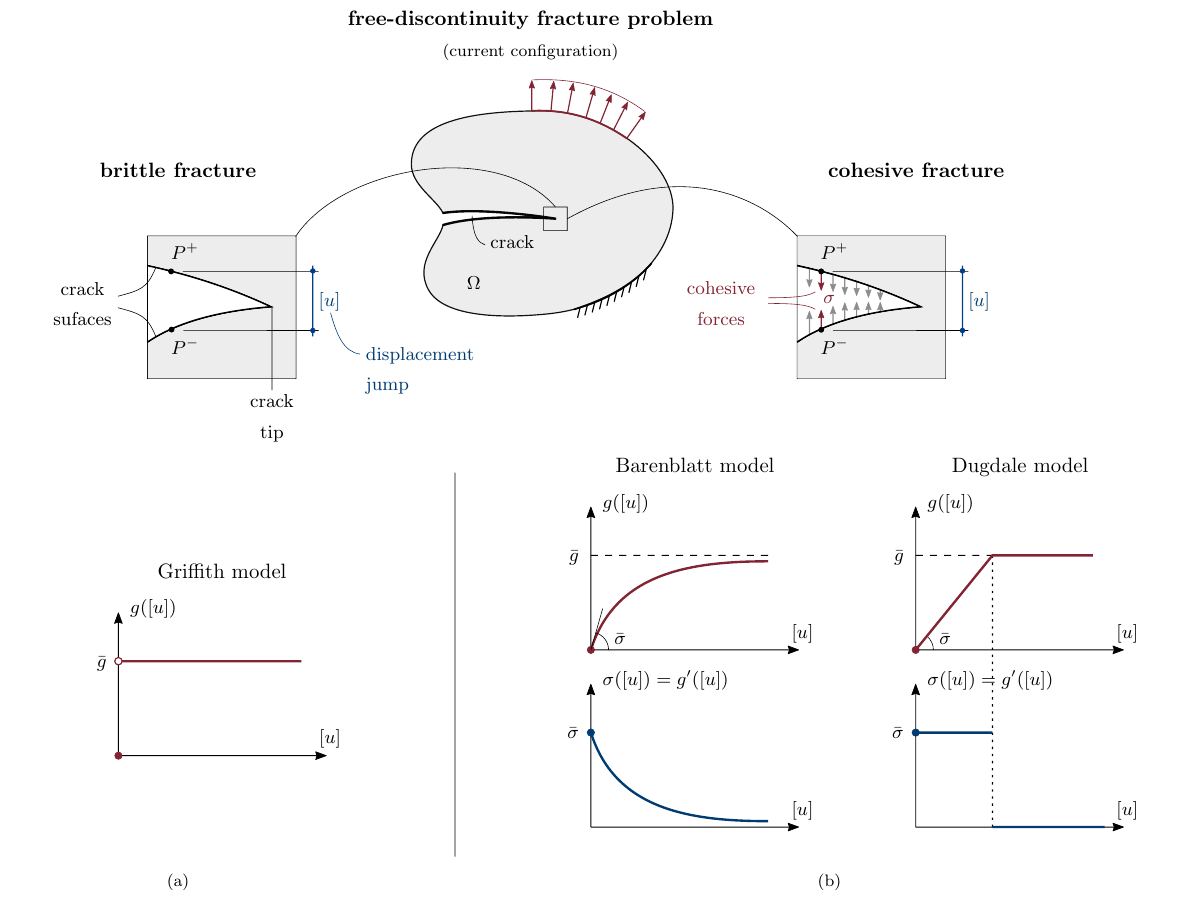}
 \caption{Qualitative trends of the surface energy density $\Gf$ and associated cohesive stress $\scristr$ with respect to the crack opening (displacement jump) $\delta$ for brittle and cohesive fracture models: (a) Griffith, (b) Dugdale and Barenblatt cohesive models. $\Gc$ and $\scristr_0$ denote the fracture toughness and the initial critical stress, respectively.}
 \label{fig_BCL}
\end{figure}

The phase-field regularization of brittle fracture, \cite{Ambrosio1992,Bourdin2000b,Focardi2001}, allows cracks to emerge naturally within the $\Gamma$-convergence framework, \cite{Braides1998}, and enables the numerical simulation of complex fracture processes, making it nowadays a leading approach in fracture mechanics \cite{Bourdin2014,Mesgarnejad2015}. For instance it captures crack nucleation, even without pre-existing singularities, and models complex crack patterns with straightforward numerical methods, often based on alternate minimization schemes.

Phase-field models for brittle fracture inherit a limitation from Griffith's theory: they do not allow independent control of critical strength, fracture toughness, and regularization length. 
As a consequence, the capability to model crack nucleation in both smooth and notched domains \cite{Bourdin2008,Tanne2018,Lopez-Pamies2024} is restricted.

Cohesive fracture models turned out to being able to overcome this limitation of Griffith's brittle fracture theory, \cite{Marigo2023}.
Building on the work of \cite{Pham2010c,Pham2010a}, \cite{Lorentz2011,Lorentz2011a} developed a standalone gradient-damage (phase-field) model, not derived from a free-discontinuity fracture model, that is able to describe a cohesive fracture behavior. The model has been validated in a one-dimensional uniaxial tension scenario by a closed-form solution and in higher-dimensional problems with numerical simulations of large-scale domains.
Despite its potential, this cohesive phase-field model was initially overlooked by many in the fracture mechanics community.

Later on, \cite{Conti2016} successfully developed a mathematically consistent variational phase-field model that regularizes the free-discontinuity cohesive fracture problem of \cite{Bourdin2008} (see also \cite{ContiFocardiIurlano2022,ContiFocardiIurlanopgrowth,Colasanto2024} for vectorial counterparts). The numerical implementation of this model, done by \cite{Freddi2017}, encountered several challenges, partially overcome by adopting a backtracking algorithm and by further regularizing the degradation function, similarly to what has been done in \cite{Lammen2023,Lammen2025}. Additionally, difficulties in adjusting the elastic degradation function to fit specific cohesive laws, along with the use of a fixed quadratic phase-field dissipation function, likely constrained the model's flexibility, limiting its wider acceptance in the fracture mechanics community.

More impactful progress was made by \cite{Wu2017,Wu2018b}, who, following \cite{Lorentz2011}, proposed a next-generation phase-field cohesive fracture model within the familiar variational framework of gradient-damage models \cite{Pham2010c,Pham2010a}. These models use polynomial crack geometric functions and rational energetic degradation functions, enabling the independent adjustment of critical stress, fracture toughness, and regularization length to match specific softening behaviors. A defining feature of the models proposed by \cite{Conti2016,Wu2017,Wu2018b} is the incorporation of the regularization length into the elastic degradation function, enhancing their ability to describe cohesive fracture failures. Examples of commonly used traction-separation laws in these models include linear, exponential, hyperbolic, and Cornelissen \cite{Cornelissen1986} laws.

Despite the model's flexibility and accuracy, determining how to set the material functions to obtain a specific target softening traction-separation law was still unclear. A breakthrough came from \cite{Feng2021}, who introduced an integral relation that links a single unknown function, defining both the degradation function and phase-field dissipation function, to the desired traction-separation law. In these phase-field models, both the regularization length and mesh size were found to have minimal impact on the overall mechanical response, provided the mesh adequately resolves the regularization length.
One of the main goals of this Part~II paper is exactly to validate analitically and to generalise the model-finding process to match a target cohesive fracture law.

Indeed, having established in the Part~I paper \cite{Alessi2025a} a $\Gamma$-convergence result in a one-dimensional setting for a broad class of phase-field energies for the description of cohesive fracture that encompasses the models proposed in \cite{Conti2016}, \cite{Wu2017,Wu2018b}, \cite{Feng2021}, and \cite{Lammen2023,Lammen2025}, in this second paper, we develop a general and flexible procedure for constructing phase-field models that reproduce prescribed cohesive traction-separation laws
exhibiting either a linear or a superlinear behaviour for small jump amplitudes
(for the analysis of the corresponding vector-valued model see \cite{Colasanto2024}). This is achieved by either fixing the degradation function and determining the damage potential or vice versa.

It is worth pointing out that our approach rigorously justifies and extends the results for the linear case of \cite{Feng2021}, enabling the derivation of multiple phase field models that correspond to the same target cohesive law but exhibit distinct localized phase field evolutions, as shown by the several examples worked out.
In addition,  the superlinear case has been shown to be particularly relevant for describing fracture processes in ductile materials by strain-gradient plasticity models~\cite{Fokoua2014}.
Our results, that provide a unified framework for the description of cohesive fracture, are validated in the Part~III paper, \cite{Alessi2025c}, where the mechanical responses of different phase-field models are investigated in depth with a more engineering-oriented perspective. Moreover, the third paper also contains in its introduction a state-of-art timeline that illustrated the key milestones, their interconnections, and how this work fits within the broader variational framework of fracture mechanics.

\subsection{A recap of the phase-field model and $\Gamma$-convergence}\label{ss:recap}


We briefly recall the phase-field model introduced and analyzed in \cite{Alessi2025a} that encompasses at the same
time several cohesive phase-field models present in the literature (see \cite{Conti2016,Wu2017,Wu2018b,Lammen2023,Lammen2025}). With this aim consider functions
$\hatf,\, Q\,,\PotDann:[0,1]\to[0,\infty)$, $f:[0,1)\to[0,\infty)$ and $\varphi:[0,\infty)\to[0,\infty)$ satisfying:
\begin{itemize}
\item[(Hp~$1$)] $\hatf,\, Q\in C^0([0,1],[0,\infty))$ with $\hatf (1)= 1$,
$\hatf^{-1}(\{0\})=Q^{-1}(\{0\})=\{0\}$, $Q$ increasing in a right neighbourhood of the origin,
and such that
\begin{equation}\label{e:f}
[0,1)\ni t\mapsto f(t):=\left( \frac{\hatf(t)}{Q(1-t)}\right)^{\sfrac12}\,,
\end{equation}
is increasing in a right neighbourhood of the origin;
\item[(Hp~$2$)] $\PotDann\in C^0([0,1],[0,\infty))$ such that $\PotDann^{-1}(\{0\})=\{0\}$,
and the following limit exists
\begin{equation}\label{e:FailureS def}
\lim_{t\to 0^+}\left(\frac{\PotDann(t)}{Q(t)}\right)^{\sfrac12}
=:\FailureS\in[0,\infty]\,;
 \end{equation}
\item[(Hp~$3$)] $\varphi\in C^0([0,\infty))$, $\varphi$ is non-decreasing and 
$\varphi(\infty):= \displaystyle{\lim_{t\to\infty}\varphi(t)}\in(0,\infty)$;
\item[(Hp~$4$)] $\varphi^{-1}(0)=\{0\}$, and
$\varphi$ is (right-)differentiable in $0$ with $\varphi'(0^+)\in(0,\infty)$.
\end{itemize}
Note that $[0,1]\ni t\mapsto f_{\eps}(t)$ is increasing in a right neighbourhood of the origin, as well.

We adopt standard notation for Sobolev, $BV$, $GBV$ spaces as in \cite{AFP}. We refer to the same book for results
related to those function spaces.
Then define the functionals $\Functeps:L^1(\Omega,\R^2)\times\calA(\Omega)\to[0,\infty]$ by
\begin{equation}\label{functeps}
 \Functeps(u,v,A):= \int_A \left( \varphi(\eps f^2(v)) |u'|^2 + \frac{\PotDann(1-v)}{4\eps} + \eps |v'|^2 \right) \dx\,
\end{equation}
if $(u,v)\in H^1(\Omega, \R\times[0,1])$ and $\infty$ otherwise. In particular, in formula \eqref{functeps} above, the functions $[0,1)\mapsto\varphi(\eps f^2(t))$ are extended by continuity to $t=1$ with value $\varphi(\infty)$ for every $\eps>0$.
Now let $\Functu_{\FailureS}:L^1(\Omega)\times \mathcal{A}(\Omega)\to [0,\infty]$ be given by
\begin{equation}\label{e:H}
 \Functu_{\FailureS}(u,A):=
 \begin{cases}
\displaystyle{\int_{A}\hsigmabarra^{**}(|u'|)\dx +(\varphi'(0^+))^{\sfrac12}\FailureS|D^cu|(A) +\int_{J_u\cap A}g(|[u]|)\dd \calH ^0} \; \; & \\
   & \hskip-2cm\textup{if} \; u\in GBV(\Omega) \\
\infty & \hskip-2cm\textup{otherwise} \\
 \end{cases}
\end{equation}
where for $\varsigma\in[0,\infty]$ we define $h_\varsigma:[0,\infty) \to [0,\infty)$ by
\begin{equation}\label{e:hsigma}
  h_\varsigma(t):=\inf_{\tau\in(0,\infty)}
\left\{\varphi\left(\frac1\tau\right)t^2 +\frac{\varsigma^2}{4}\tau\right\}\,,
\end{equation}
(in particular, $h_0(t)=0$ and $h_\infty(t)=\varphi(\infty)t^2$ for every $t\in[0,\infty)$),
$\hsigmabarra^{**}$ denotes the convex envelope of $\hsigmabarra$, and
$g:[0,\infty)\to \mathbb{R}$ is defined by
\begin{equation}\label{e:lagiyo}
  g(s):=\inf_{(\gamma,\beta)\in \calulu _s}
  \int_0^1 \Big(\PotDann(1-\beta)\big(
  \varphi'(0^+)f^2(\beta)|\gamma'|^2
  +|\beta'|^2\big)\Big)^{\sfrac12} \dx\,,
\end{equation}
where $\calulu_s$ is
\begin{align}\label{e:glispaziu}
  \calulu_{s}=\{\gamma,\beta \in H^1((0,1)):\,&
  \gamma(0)=0\,,\gamma(1)=s\,, 
  0\leq \beta \leq 1\,, \beta(0)=\beta(1)= 1\}\,,
\end{align}

In \cite[Theorem~1.1]{Alessi2025a} we have established the following $\Gamma$-convergence result
(cf. \cite{Dalmaso1993, Braides1998} for theory and results on $\Gamma$-convergence).
In what follows we adopt the convention $0\cdot\infty=0$. 
\begin{theorem}\label{t:finale}
Assume (Hp~$1$)-(Hp~$4$), and \eqref{e:FailureS def} hold with $\FailureS\in(0,\infty]$.
Let $\Functeps$ be the functional defined in \eqref{functeps}.
Then, for all $(u,v)\in L^1(\Omega,\R^2)$
\[
\Gamma({L^1})\text{-}\lim_{\eps\to0}\Functeps(u,v)=
F_{\,\FailureS}(u,v)\,,
\]
where, 
$F_{\,\FailureS}:L^1(\Omega,\R^2)\times\calA(\Omega)\to[0,\infty]$ is defined by
\begin{equation}\label{F0}
F_{\,\FailureS}(u,v):=
\begin{cases}
\Functu_{\,\FailureS}(u) & \textup{if $v=1$ $\mathcal{L}^1$-a.e. on $\Omega$}\cr
\infty & \textup{otherwise}.
\end{cases}\,
\end{equation}
\end{theorem}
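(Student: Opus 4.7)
My plan is to establish the standard two bounds of a $\Gamma$-convergence statement — the liminf inequality along every sequence $(u_\eps,v_\eps)\to(u,v)$ in $L^1$ and the existence of a matching recovery sequence — together with a preliminary compactness observation that forces $v=1$ a.e.\ whenever the energies stay bounded (so that the otherwise-branch of $F_{\,\FailureS}$ is automatic). The essential mechanism is a Modica--Mortola / Ambrosio--Tortorelli splitting in which the three summands of $\Functeps$ couple differently on regions where the approximate gradient $|u'_\eps|$ is moderate (producing the bulk density $\hsigmabarra^{**}(|u'|)$ and the diffuse Cantor term $(\varphi'(0^+))^{\sfrac12}\FailureS |D^c u|$) and on thin transition layers where $v_\eps$ dips towards $0$ (producing the cohesive surface density $g(|[u]|)$).

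For the liminf inequality I would argue by slicing and blow-up. Boundedness of $\int_A \PotDann(1-v_\eps)/(4\eps)\,\dx$ yields $v_\eps\to 1$ a.e., while the Young inequality $\eps|v'_\eps|^2+\PotDann(1-v_\eps)/(4\eps)\ge (\PotDann(1-v_\eps))^{\sfrac12}|v'_\eps|$ gives a coarea-type compactness that places $u\in GBV(\Omega)$. At a Lebesgue point $x$ of the approximate gradient, pointwise optimisation of $\varphi(\eps f^2(v_\eps))|u'_\eps|^2+\PotDann(1-v_\eps)/(4\eps)$ in the auxiliary variable $\tau:=1/(\eps f^2(v_\eps))$ reproduces the infimum defining $h_{\FailureS}$ via the limit \eqref{e:FailureS def}, giving pointwise the bound $h_{\FailureS}(|u'(x)|)$; the convex envelope $\hsigmabarra^{**}$ is then recovered by the usual oscillation / relaxation argument. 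Near a jump $x_0\in J_u$ with $|[u]|=s$, the pair $(u_\eps,v_\eps)$ rescaled to the unit interval is admissible in $\calulu_s$, and a second application of the AM--GM inequality to the products $\varphi(\eps f^2)|u'_\eps|^2\cdot \PotDann$ and $\eps|v'_\eps|^2\cdot \PotDann/\eps$ reconstructs the integrand of \eqref{e:lagiyo}, yielding $g(s)$ in the limit.

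For the limsup inequality, I would first reduce by density and by lower semicontinuity of $F_{\,\FailureS}$ to target functions $u$ that are piecewise affine with finitely many jumps and no Cantor part. Away from the jump set I take $v_\eps\equiv 1$, possibly perturbed on a fine microstructure to realise the pointwise minimiser in $\tau$ so that the bulk contribution converges to $\int\hsigmabarra^{**}(|u'|)\,\dx$. At each jump point $x_0$ I choose an almost-minimiser $(\gamma_\delta,\beta_\delta)\in\calulu_{|[u](x_0)|}$ of \eqref{e:lagiyo} and paste the rescaled profiles $v_\eps(x)=\beta_\delta((x-x_0)/\eta_\eps)$, $u_\eps(x)=u(x_0^-)+\gamma_\delta((x-x_0)/\eta_\eps)$ on a layer of width $\eta_\eps\to 0$ with $\eta_\eps/\eps\to\infty$; a change of variables and Young's inequality in reverse make the surface contribution converge to $g(|[u](x_0)|)$ while the bulk error on the layer is negligible. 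A diagonal extraction $\delta\to 0$ together with passage from piecewise affine $u$ to the general $GBV$ case completes the construction.

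The main obstacle is the identification of the Cantor term $(\varphi'(0^+))^{\sfrac12}\FailureS |D^c u|$ in the liminf. One must show that the portion of the energy not captured by either the a.c.\ part or the jump part concentrates precisely on $|D^c u|$ with the correct constant: this requires a careful control of how $v_\eps$ transitions between its diffuse value $\simeq 1$ and the jump value $\simeq 0$, and crucially uses the linearisation $f^2(t)\sim \hatf(t)/Q(t)$ together with \eqref{e:FailureS def} to link the interplay between $\PotDann$, $Q$ and $\varphi'(0^+)$. The case $\FailureS=\infty$ (in which the Cantor part forces $u\in SBV$) and the extraction of $\hsigmabarra^{**}$ rather than $\hsigmabarra$ through oscillating microstructures in $v_\eps$ are further subtle matching points that must be handled consistently in the upper and lower bounds.
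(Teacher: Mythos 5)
The theorem you have addressed is not actually proven in this paper: the text states explicitly, ``In \cite[Theorem~1.1]{Alessi2025a} we have established the following $\Gamma$-convergence result,'' so what appears here is a recall of the Part~I companion result, stated without proof. There is therefore no in-paper argument against which to compare yours; what follows is an assessment of your sketch on its own terms.

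Your outline is a plausible Ambrosio--Tortorelli/Modica--Mortola template (Young inequality for a coarea bound and $GBV$ compactness, pointwise optimisation in $\tau=1/(\eps f^2(v_\eps))$ to see $h_{\FailureS}$, rescaling near a jump to expose $\calulu_s$ and the integrand of \eqref{e:lagiyo}, pasted near-optimal profiles for the upper bound), but it stops well short of a proof, and the hardest points are left open. You yourself flag the Cantor term as ``the main obstacle'' and do not resolve it: extracting exactly $(\varphi'(0^+))^{\sfrac12}\FailureS|D^cu|$ from the intermediate scales is not a byproduct of the bulk or jump estimates and requires a multi-scale decomposition of the transition regions that your sketch does not supply. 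There are also imprecisions in what you do spell out. The AM--GM step near a jump gives $\big((\varphi(\eps f^2(v))|u'|^2+\eps|v'|^2)\,\PotDann(1-v)/\eps\big)^{\sfrac12}$, and to turn this into the integrand of \eqref{e:lagiyo} you need both a change of variables that makes the powers of $\eps$ cancel exactly and the uniform linearisation $\varphi(\eps f^2(v_\eps))\sim \varphi'(0^+)\,\eps f^2(v_\eps)$ on the whole blown-up layer (justified via (Hp~$4$) and control on $\eps f^2(v_\eps)$), not just an a.e.\ asymptotic; you do not address how the competitors remain admissible uniformly across the three regimes (a.c., Cantor, jump), nor why the pointwise bound by $h_{\FailureS}$ upgrades to $\hsigmabarra^{**}$ in the liminf beyond invoking ``relaxation.'' These are genuine gaps that a full proof such as the one in the Part~I paper must close.
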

In particular, if $\Functu_\infty(u)<\infty$ above, then $|D^cu|(\Omega)=0$ (cf. \eqref{e:H}), and thus we conclude that $u\in GSBV(\Omega)$ with
$u'\in L^2(\Omega)$

We recall that changing the scaling in the degradation function $\varphi(\eps f(t))$ to $\varphi(\gamma_\eps f(t))$,
with $\sfrac\eps{\gamma_\eps}\to0 $ as $\eps\to0^+$, then the corresponding functionals \eqref{functeps} $\Gamma$-converge
to a brittle type functional (cf. \cite[Theorem~1.2]{Alessi2025a}).

\subsection{Contents of this Paper}

The great generality in which Theorem~\ref{t:finale} is proven is instrumental to address the issue of constructing 
the phase-field model in order to obtain an assigned cohesive law $g$. In Section~\ref{s:sezione 5.3} we take advantage of Theorem~\ref{t:finale} to solve the latter problem for a large class of concave functions with either a linear or a superlinear behaviour for small jump amplitudes, including several examples which are typically used in the engineering community for numerical simulations (cf. Section~\ref{ss:esempi g esplicite}).
The latter problem has already been discussed from a numerical and engineering perspective in \cite{Wu2017,Wu2018b,Feng2021,Lammen2025,Wu2025}.

Section~\ref{s:assegnare g} is devoted to provide an alternative characterization of the surface energy densities
in the phase-field model. Our approach is different to the one introduced in \cite{BCI,BI23} (cf. the comments before Proposition~\ref{p:cambio variabile in v old} for a more detailed comparison).
One of the main ideas we introduce is rewriting in a convenient way the minimization formula that defines the surface
energy density $g(s)$ for every jump amplitude $s\geq0$ (cf. \eqref{e:lagiyo} in Theorem~\ref{t:finale}).
Indeed, the latter entails the solution of a vector-valued, one-dimensional variational problem with Dirichlet boundary conditions. 
Via a change of variable we are able to reduce it to a minimum problem for a family of scalar and one-dimensional functionals $G_{m,s}$, 
defined in \eqref{e:funzionale Gms}, depending on the deformation gradient, parametrized by the 
minimum value $m$ of the phase-field variable and finite on $W^{1,1}$ functions satisfying a suitable Dirichlet boundary condition depending on $s$
(cf. Proposition~\ref{p:cambio variabile in v old}). The new minimum problem is obtained by minimizing first 
each functional $G_{m,s}$ on $L^1$, and then minimizing the corresponding minimimum value $\mathfrak{G}_s(m)$ on the interval
$(0,1)$
(cf. \eqref{e:doppio inf rilassato G} and \eqref{e:mathfrakGs}),
namely $g(s)=\inf_{m\in(0,1)}\mathfrak{G}_s(m)$ where $\mathfrak{G}_s(m)=\inf_{L^1} G_{m,s}$.

Another key step in our approach is the variational analysis of the energies $G_{m,s}$ introduced in \eqref{e:funzionale Gms}.
In particular, for each $m$ we determine the relaxed functional with respect to the $L^1$ topology of 
$G_{m,s}$, which turns out to be finite on $BV$, and to have a unique minimizer that it is either 
$W^{1,1}$ or $SBV\setminus W^{1,1}$ with the singular part of the distributional derivative concentrated 
in the point $t=m$ in the second instance (cf. Theorem~\ref{t:regolarità minimo old} and Corollary~\ref{c:regolarità minimo old}).
The analysis of the function $\mathfrak{G}_s$ exploits an auxiliary key function $\Phi$, defined in \eqref{e:la psi piccola}, determining its monotonicity properties. 
More precisely, $\mathfrak{G}_s$ turns out to be increasing on intervals for which the minimizer $w_{m,s}$ of the relaxation of $G_{m,s}$ is $SBV\setminus W^{1,1}$ and decreasing otherwise (for the precise statement see Lemma~\ref{l:derivate energia}). 
In addition, the regularity of the minimizer $w_{m,s}$ depends on whether $m$ belongs or not to the $s$ sublevel set of $\Phi$. Given this, we are able to show that $g(s)$ is reduced either to $\inf\{\mathfrak{G}_s(m):\,\Phi(m)=s\}$ or to its behaviour for $m=0$ and $m=1$, for every $s$ in the range of $\Phi$ (for the precise statement see Proposition~\ref{l:Gs reduction}).
In particular, in case $\Phi$ is strictly decreasing then $g$ turns out to be concave and of class $C^1$,
with derivative expressed exactly in terms of $\Phi^{-1}(s)$ (cf. Theorems~\ref{t:teo la g esplicita}). 
In turn, this information is used in Theorems~\ref{t:hatf fixed} and \ref{t:potdann fixed} for the linear case,
and in Theorems~\ref{t:potdann fixed infinito} and \ref{t:hatf fixed infinito} in the superlinear one,
to define a phase-field model choosing either the damage potential and fixing the degradation
function in the former or vice versa in the latter, with corresponding $\Gamma$-limit that has surface
energy density exactly equal to the assigned function $g$. With this aim we are led to solve the Abel integral equation corresponding to the equality $g(s)=\mathfrak{G}_s(\Phi^{-1}(s))$ (cf. \eqref{e:Abel equation}).
A similar use of Abel integral equation is also proposed in \cite[Section~3.2]{Feng2021} for the heuristic calibration of the damage potential in the phase-field model for numerical convergence purposes.
Instead, in the linear case if $\Phi$ is non-decreasing the function $g$ always corresponds to the Dugdale cohesive law
(cf. Theorems~\ref{t:teo la g esplicita Dugdale}).

\section{Analysis of the surface energy densities arising in the phase-field approximation}\label{s:assegnare g}

In this section we study in depth the surface energy densities $g$ obtained via the phase-field approximation
in Theorem~\ref{t:finale}. For solving the reconstruction problem it is key to provide an equivalent and more
maneageable characterization with respect to its definition in formula \eqref{e:lagiyo}.

With this aim, throughout the whole section $\hatf$, $Q$, $\PotDann$ and $\varphi$ are assumed to satisfy (Hp~$1$)-(Hp~$4$)
even if not explicitly mentioned.

It is convenient to recall several qualitative properties of $g$ that follows directly from \eqref{e:lagiyo} established in \cite[Section~2.3]{Alessi2025a}. With this aim we introduce the function $\Psi:[0,1]\to[0,\infty)$ given by
\begin{equation}\label{e:Psi}
\Psi(t):=\int_0^t\PotDann^{\sfrac12}(1-\tau)d\tau\,.
\end{equation}
We start with the case in which $g$ is linear for small jump amplitudes.
\begin{proposition}\label{p:lepropdig}
Under the assumptions of Theorem~\ref{t:finale} with $\FailureS\in(0,\infty)$, the function $g$ defined in \eqref{e:lagiyo} enjoys the following properties:
\begin{itemize}
\item[(i)] $g(0)=0$, $g$ is non-decreasing, and subadditive;

\item[(ii)] $g(s)\leq (\varphi'(0^+))^{\sfrac12}\,\FailureS s\wedge 2\Psi(1)$, $g$ is
Lipschitz continuous with Lipschitz constant equal to $(\varphi'(0^+))^{\sfrac12}\,\FailureS$;

\item[(iii)] the ensuing limit exists and
\begin{equation}\label{e:Lafigrandeinfi}
\lim_{s\to \infty} g(s)=2\Psi(1);
\end{equation}

\item[(iv)] the ensuing limit exists and
\begin{equation}\label{e:strenght}
\lim_{s\to 0}\frac{g(s)}{s}=(\varphi'(0^+))^{\sfrac12}\,\FailureS.
\end{equation}

\end{itemize}
\end{proposition}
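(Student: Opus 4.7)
The plan is to interpret the integrand in \eqref{e:lagiyo} as a length element for the degenerate Riemannian metric $ds^2 = \PotDann(1-\beta)\bigl(\varphi'(0^+)f^2(\beta)\,d\gamma^2 + d\beta^2\bigr)$ on the strip $\R\times[0,1)$, so that $g(s)$ is the induced distance from $(0,1)$ to $(s,1)$; reparametrization invariance of the integrand then gives the right to concatenate and rescale competitors freely. This immediately settles (i): $g(0)=0$ is witnessed by $\gamma\equiv 0$ paired with a $\beta$-family dipping slightly below $1$, whose energy is controlled by $2\int_0^\eta\sqrt{\PotDann(b)}\,db\to 0$; monotonicity follows by rescaling $\gamma\mapsto(s_1/s_2)\gamma$ in any $\calulu_{s_2}$-competitor, which shrinks the integrand pointwise; and subadditivity is obtained by splicing near-minimizers of $\calulu_{s_1}$ and $\calulu_{s_2}$ onto $[0,\sfrac12]$ and $[\sfrac12,1]$, the splicing preserving energy thanks to reparametrization invariance.

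For the upper bounds in (ii) I would design two competitor families. A \emph{tunneling competitor} has $\beta$ drop from $1$ to $0$ on $[0,\delta]$, stay at $0$ on $[\delta,1-\delta]$ where $\gamma$ carries the full jump $s$, and return to $1$ on $[1-\delta,1]$: since $f(0)=0$ the middle contribution vanishes, while each transition yields, via the substitution $b=1-\beta$, the cost $\Psi(1)$, giving $g(s)\le 2\Psi(1)$. A \emph{near-$\beta=1$ competitor} takes $\beta\equiv 1-\eta$ on most of $[0,1]$ with sharp endpoint transitions and $\gamma$ linearly interpolating in the middle; integrating the middle integrand yields $\sqrt{\varphi'(0^+)\PotDann(\eta)\hatf(1-\eta)/Q(\eta)}\,s$, which tends to $(\varphi'(0^+))^{\sfrac12}\FailureS\,s$ as $\eta\to 0$ by \eqref{e:FailureS def} and $\hatf(1)=1$, while the transition contributions vanish, proving $g(s)\le(\varphi'(0^+))^{\sfrac12}\FailureS\,s$. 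Lipschitz continuity with the stated constant then follows from monotonicity and subadditivity via $0\le g(s_2)-g(s_1)\le g(s_2-s_1)\le(\varphi'(0^+))^{\sfrac12}\FailureS(s_2-s_1)$.

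The matching lower bounds in (iii) and (iv) are the delicate part. I would rely on the two pointwise estimates $\sqrt{\PotDann(1-\beta)(A+B)}\ge\sqrt{\PotDann(1-\beta)}|\beta'|$ and $\sqrt{\PotDann(1-\beta)(A+B)}\ge\sqrt{\PotDann(1-\beta)\varphi'(0^+)}\,f(\beta)|\gamma'|$, with $A=\varphi'(0^+)f^2(\beta)|\gamma'|^2$ and $B=|\beta'|^2$; the first, combined with the change of variable $b=1-\beta$ and the boundary conditions on $\beta$, yields $\int_0^1\sqrt{\PotDann(1-\beta)}|\beta'|\,dx\ge 2(\Psi(1)-\Psi(\beta_{\min}))$. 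For (iii), if along some subsequence $s\to\infty$ near-minimizers had $\beta_{\min}\ge m>0$, the second bound would force $g(s)\ge\sqrt{\varphi'(0^+)}\,s\,\inf_{[m,1)}\sqrt{\PotDann(1-\beta)}f(\beta)\to\infty$ (positivity from continuity on $(0,1)$), contradicting $g(s)\le 2\Psi(1)$; hence $\beta_{\min}\to 0$ along near-minimizers and $g(s)\to 2\Psi(1)$. For (iv), fix $\epsilon>0$ and $\delta>0$ such that $\sqrt{\PotDann(1-\beta)}f(\beta)\ge\FailureS-\epsilon$ on $[1-\delta,1)$: any competitor whose $\beta$ dips below $1-\delta$ pays at least $2\int_0^\delta\sqrt{\PotDann(b)}\,db$, which exceeds $(\varphi'(0^+))^{\sfrac12}\FailureS s$ for $s$ small, so near-minimizers satisfy $\beta\ge 1-\delta$ throughout, and the second bound together with $\int|\gamma'|\ge s$ gives $g(s)\ge\sqrt{\varphi'(0^+)}(\FailureS-\epsilon)s$. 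The main obstacle is precisely this dichotomy for near-minimizers: it rests only on the boundary asymptotics of the coupling $\sqrt{\PotDann(1-\beta)}f(\beta)$ at $\beta=0$ and at $\beta=1^-$, not on any global monotonicity of $f$ on $[0,1)$.
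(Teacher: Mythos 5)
Your proposal is correct, and a preliminary remark is in order: this paper does not actually prove Proposition~\ref{p:lepropdig} — the properties are recalled from Section~2.3 of Part~I — so there is no in-paper proof to compare against line by line. Your route is the natural direct one and it works: the upper bounds via the two explicit competitor families (the tunnelling profile with $\beta$ reaching $0$, which costs exactly $2\Psi(1)$ since $f(0)=0$, and the plateau profile $\beta\equiv 1-\eta$, whose middle cost tends to $(\varphi'(0^+))^{\sfrac12}\FailureS s$ by \eqref{e:FailureS def} and $\hatf(1)=1$); the splicing and rescaling steps for (i), which are legitimate because the integrand is $1$-homogeneous in $(\gamma',\beta')$ and independent of $\gamma$ itself; the Lipschitz bound from monotonicity plus subadditivity; and the lower bounds in (iii)–(iv) from the two pointwise estimates together with the total-variation change of variables and the dichotomy on $\beta_{\min}$, which indeed uses nothing beyond the boundary behaviour of the coupling and so does not require (Hp~$5$).

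Two small points deserve care, neither of which is a real gap. First, your lower-bound estimates apply the pointwise inequality with the coupling $\sqrt{\PotDann(1-\beta)}\,f(\beta)$ evaluated also where $\beta=1$; since $f(1^-)=\infty$ and $\PotDann(0)=0$, the integrand in \eqref{e:lagiyo} must be read with $\PotDann(1-t)f^2(t)=\hatk(t)$ extended by continuity to $t=1$ with value $\FailureS^2$ — this is the intended reading (cf. \eqref{e:def G} and the observation $\hatk(1^-)=\FailureS^2$), whereas a literal application of the convention $0\cdot\infty=0$ at $\beta=1$ would make the infimum trivially zero; you should state this explicitly. Second, in (iii) the positivity you invoke is $\inf_{[m,1)}\hatk^{\sfrac12}>0$ for $m>0$, which follows from continuity and strict positivity of the extended $\hatk$ on the compact set $[m,1]$ (using $\hatk(1^-)=\FailureS^2>0$), not merely ``continuity on $(0,1)$''; this is a wording fix. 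Finally, $g(0)=0$ is immediate from the constant competitor $(\gamma,\beta)\equiv(0,1)$, so the dipping construction there is unnecessary.
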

We then consider the superlinear setting.
\begin{proposition}\label{p:lepropdig infty}
Under the assumptions of Theorem~\ref{t:finale} with $\FailureS=\infty$,
the function $g$ defined in \eqref{e:lagiyo} enjoys the following properties:
\begin{itemize}
\item[(i)] $g(0)=0$,  $g$ is non-decreasing, and subadditive;
\item[(ii)] $g\in C^0([0,\infty))$, $0\leq g(s)\leq \widehat{g}(s)$ for every $s\geq 0$, where
\begin{equation}\label{e:g zero def}
    \widehat{g}(s):=\inf_{x\in(0,1]}\left\{2(\Psi(1)-\Psi(1-x))+\Big(\varphi'(0^+)\hatf(1-x)\frac{\PotDann(x)}{Q(x)}\Big)^{\sfrac12}\,s\right\}\,,
\end{equation}
$\widehat{g}\in C^0([0,\infty))$ is concave, non-decreasing, $\widehat{g}(s)\leq 2\Psi(1)$, and
\begin{equation}\label{e:g zero behaviour in 0}
\lim_{s\to 0^+}\frac{\widehat{g}(s)}s=\infty;
\end{equation}

\item[(iii)]
\begin{equation*}
\lim_{s\to \infty} g(s)=2\Psi(1);
\end{equation*}

\item[(iv)]
\begin{equation}\label{e:g behaviour in 0}
2^{-\sfrac12}\leq\liminf_{s\to 0^+}\frac{g(s)}{\widehat{g}(s)}\leq
\limsup_{s\to 0^+}\frac{g(s)}{\widehat{g}(s)}\leq 1\,.
\end{equation}

\end{itemize}
\end{proposition}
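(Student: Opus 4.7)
The overall plan is to mirror the linear-case arguments of Proposition~\ref{p:lepropdig} in the easier parts, isolating the three genuinely new points: the super-linearity \eqref{e:g zero behaviour in 0}, the convergence $\min\beta_n\to 0$ required in~(iii), and the quantitative lower bound \eqref{e:g behaviour in 0}.

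For (i) I would take $(\gamma,\beta)\equiv(0,1)$ for $g(0)=0$; deduce monotonicity from the dilation $\gamma\mapsto(s/s')\gamma$, which turns an admissible pair for $s'\ge s$ into an admissible pair for $s$ with a pointwise smaller integrand; and deduce subadditivity by concatenating optimal profiles for $s_1$ and $s_2$ on the two halves of $[0,1]$, exploiting the invariance of the functional under reparameterizations. The inequality $g\le\widehat g$ in (ii) follows from the ``decrease--plateau--increase'' competitor in which $\beta$ decreases monotonically from $1$ to $1-x$ on $[0,\tau]$, stays constant on the plateau $[\tau,1-\tau]$, and rises symmetrically back to $1$, while $\gamma$ is affine of slope $s/(1-2\tau)$ on the plateau; via $\Psi'(t)=\PotDann^{\sfrac12}(1-t)$ the two boundary layers and the plateau reproduce exactly the two summands in \eqref{e:g zero def} as $\tau\to 0^+$. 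All qualitative properties of $\widehat g$ then follow directly: concavity as an infimum of affine functions of $s$, the bound $\widehat g\le 2\Psi(1)$ taking $x=1$ (using $\hatf(0)=0$), and continuity on $[0,\infty)$ from concavity together with $\widehat g(0)=0$ (obtained as $x\to 0^+$). I would then argue \eqref{e:g zero behaviour in 0} by contradiction: if $\widehat g(s_n)\le Cs_n$ along some $s_n\downarrow 0$, choosing $s_n^2$-near optimizers $x_n$ forces $2(\Psi(1)-\Psi(1-x_n))\le(C+s_n)s_n\to 0$, hence $x_n\to 0$; by (Hp~$2$) with $\FailureS=\infty$ and $\hatf(1)=1$, the slope $(\varphi'(0^+)\hatf(1-x_n)\PotDann(x_n)/Q(x_n))^{\sfrac12}$ then diverges, contradicting $\widehat g(s_n)/s_n\le C$. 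Continuity of $g$ finally follows from subadditivity and continuity of $\widehat g$ at the origin.

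The upper bound in (iii) is inherited from (ii). For the lower bound I would take a $1/n$-minimizer $(\gamma_n,\beta_n)$ for $g(s_n)$ with $s_n\to\infty$ and, via $\sqrt{a+b}\ge\sqrt a$ and the coarea formula applied to $\beta_n$, obtain $g(s_n)+1/n\ge 2(\Psi(1)-\Psi(m_n))$ with $m_n:=\min\beta_n$, so that the lower bound reduces to $m_n\to 0$. Setting $A(\beta):=(\varphi'(0^+)\hatf(\beta)\PotDann(1-\beta)/Q(1-\beta))^{\sfrac12}$, the complementary bound $\sqrt{a+b}\ge\sqrt b$ yields $g(s_n)+1/n\ge\int_0^1 A(\beta_n)|\gamma_n'|\,\dx$. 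Assuming by contradiction $m_n\ge\delta>0$ along a subsequence, so that $\beta_n\in[\delta,1]$, I would split $[\delta,1]=[\delta,1-\eta]\cup[1-\eta,1]$: on the first piece $A$ is bounded below by a positive constant by continuity and positivity on a compact set; on the second piece $A$ is bounded below by some $C_\eta\uparrow\infty$ as $\eta\to 0^+$ thanks to $\FailureS=\infty$ and $\hatf(1)=1$. Hence $g(s_n)+1/n\ge c\int_0^1|\gamma_n'|\,\dx\ge cs_n\to\infty$ for a suitable $c=c(\delta,\eta)>0$, contradicting $g(s_n)\le 2\Psi(1)$.

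For (iv), the upper bound follows from (ii). For the lower bound I would apply the elementary inequality $\sqrt{a+b}\ge 2^{-\sfrac12}(\sqrt a+\sqrt b)$ to the integrand to decouple the problem as
\begin{equation*}
g(s)\ge 2^{-\sfrac12}\inf_{(\gamma,\beta)\in\calulu_s}\biggl(\int_0^1\PotDann^{\sfrac12}(1-\beta)|\beta'|\,\dx+\int_0^1 A(\beta)|\gamma'|\,\dx\biggr).
\end{equation*}
For each candidate value $m$ of $\min\beta$, the first integral is at least $2(\Psi(1)-\Psi(m))$ (coarea again), while the second is at least $s\inf_{t\in[m,1]}A(t)$; since $\Psi$ is non-decreasing, the resulting double infimum collapses to $\inf_m\{2(\Psi(1)-\Psi(m))+A(m)s\}=\widehat g(s)$, giving $g(s)\ge 2^{-\sfrac12}\widehat g(s)$ for every $s\ge 0$ and hence \eqref{e:g behaviour in 0}. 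The most delicate step in the whole proof is the argument $m_n\to 0$ in (iii): it requires exploiting the blow-up of $A$ near $\beta=1$, which together with \eqref{e:g zero behaviour in 0} is the only place where the super-linearity $\FailureS=\infty$ is genuinely used.
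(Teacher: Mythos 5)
This proposition is not proved in the present paper at all: it is recalled verbatim from \cite[Section~2.3]{Alessi2025a}, so there is no in-paper proof to compare against, and your argument has to be judged on its own. On those terms it is essentially correct and follows the natural route: the trivial competitor, dilation and concatenation arguments for (i); the decrease--plateau--increase competitor giving $g\le\widehat g$ exactly (the limit $\tau\to0^+$ is not even needed, since the plateau contribution is $\tau$-independent); the contradiction argument for \eqref{e:g zero behaviour in 0}, which correctly isolates where $\FailureS=\infty$ and $\hatf(1)=1$ enter; the bound $g(s_n)+\sfrac1n\ge 2(\Psi(1)-\Psi(m_n))$ via $\Psi\circ\beta_n$ together with the blow-up of the coefficient near $\beta=1$ to force $m_n\to0$ in (iii); and the decoupling $\sqrt{a+b}\ge 2^{-\sfrac12}(\sqrt a+\sqrt b)$ in (iv). The collapse of the double infimum onto $\widehat g(s)$ is the right observation: since $\Psi$ is non-decreasing one may replace $\Psi(m)$ by $\Psi(t)$ at a near-minimizer $t\in[m,1)$ of the slope, and this in fact yields the stronger pointwise bound $g\ge 2^{-\sfrac12}\,\widehat g$ on $[0,\infty)$, of which \eqref{e:g behaviour in 0} is a consequence.

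Two points should be tightened. First, continuity of $\widehat g$ at $s=0$ does not follow from concavity together with $\widehat g(0)=0$ (a concave, non-decreasing function may jump at the left endpoint); what you need, and what your modulus-of-continuity argument for $g$ actually uses, is $\limsup_{s\to0^+}\widehat g(s)\le 2(\Psi(1)-\Psi(1-x))$ for each fixed $x\in(0,1]$, followed by $x\to0^+$ -- a one-line fix from the very definition \eqref{e:g zero def}. Second, your lower bounds in (iii) and (iv) tacitly read the coefficient $\PotDann(1-\beta)f^2(\beta)$ on the set $\{\beta=1\}$ as $\hatk(1^-)=\FailureS^2=\infty$, so that $A\ge c>0$ on all of $[\delta,1]$ and the infimum in the collapse runs over $t\in[m,1)$; since $f$ is only defined on $[0,1)$ and the paper elsewhere adopts the convention $0\cdot\infty=0$, you should state explicitly that \eqref{e:lagiyo} is understood with coefficient $\hatk(\beta)$ extended by $\hatk(1):=\hatk(1^-)$, exactly as in the rewriting \eqref{e:definizione g assegnata}--\eqref{e:def G}; under any other reading the estimate $\int_0^1 A(\beta_n)|\gamma_n'|\dx\le \mathcal{G}(\gamma_n,\beta_n)$ is not justified on $\{\beta_n=1\}$ (and indeed the statement itself would fail). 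With these two remarks added, the proof is complete.
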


For the sake of notational simplicity we also assume that the function $\varphi$ in the definition of the degradation
function obeys $\varphi'(0^+)=1$ (clearly, this restriction can be easily dropped).
Finally, we also assume that $Q$, $\hatf$ and $\PotDann$ satify
\begin{itemize}
\item[(Hp~$5$)] $\hatk:[0,1)\to [0,\infty)$ is strictly increasing, where
\[
\hatk(t):=\frac{\omega(1-t)}{Q(1-t)}\hatf(t)\,.
\]
\end{itemize}
Note that $\hatk(t)=\PotDann(1-t)f(t)$ (cf. \eqref{e:f}).
Necessarily, $\hatk\in C^0([0,1))$, with $\hatk(0)=0$ and $\hatk (1^-) = \FailureS^2\in(0,\infty]$ (cf. \eqref{e:FailureS def} and recall that $\hatf(1)=1$). For some results we will need to strengthen (Hp~$5$) (cf. assumption (Hp~$5'$) below).

With this notation at hand, formula \eqref{e:lagiyo} rewrites as  
\begin{equation}\label{e:definizione g assegnata}
    g(s)=\inf_{(u,v)\in \calulu_s} \mathcal{G}(u,v)
\end{equation}
for every $s\in [0,\infty)$, where $\calulu_s$ is defined in \eqref{e:glispaziu}
(see also $\calulu_{s,1}$ below), and
$\mathcal{G}:W^{1,1}((0,1);\R^2)\to[0,\infty)$ is defined by
\begin{equation}\label{e:def G}
\mathcal{G}(u,v):=    \int_0^1 \Big(\hatk(v)|u'|^2+\PotDann(1-v)|v'|^2\Big)^{\sfrac12}\dx\,.
\end{equation}
We remark that the infimum defining $g$ can be equivalently taken on $\calulu_{s,1}$, where 
\begin{align}\label{e:glispaziu bis}
  \calulu_{s,1}:=\{u\in W^{1,1}((0,1)),&\,v\in H^1((0,1)): 
  u(0)=0\,,u(1)=s\,,\,
  0\leq v \leq 1\,, v(0)=v(1)= 1\}\,,
\end{align}
as the functional $\mathcal{G}$ is continuous in the $W^{1,1}$ topology.

\subsection{An equivalent characterization of $g$}

In this section we introduce a different minimization problem defining $g$ 
which will be more suitable for our purposes.
With this aim, in order to reduce first the space of competitors for $v$ to those satisfying suitable monotonicity properties we first prove a technical lemma.
\begin{lemma}\label{l:mantieni regolarità}
Let 
$v\in W^{1,p}((0,1))\cap C^0([0,1])$ with $p\in [1,\infty]$, and 
for every $x\in[0,1]$ set
\begin{equation*}
 T_0 v(x):=\inf_{t\in [0,x]}v(t),\quad\text{and}\quad  
 T_1 v(x):=\inf_{t\in [x,1]}v(t)\,.
\end{equation*}
Then, 
\begin{itemize} 
\item[(i)] $T_0v,\,T_1v\leq v$ on $[0,1]$, $T_0v$ is non-increasing with $T_0v(0)=v(0)$, $T_1v$ is non-decreasing with $T_1v(1)=v(1)$;
\item[(ii)] $T_0v,\,T_1v\in W^{1,p}((0,1))\cap C^0([0,1])$, with $|(T_0v)'|,\,|(T_1v)'|\leq|v'|$ $\calL^1$ a.e on $(0,1)$;
\item[(iii)] $(T_iv)'=0$ $\calL^1$ a.e on $\{T_iv\neq v\}$, $i\in\{0,1\}$.
\end{itemize}
\end{lemma}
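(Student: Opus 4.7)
\medskip

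\noindent\textbf{Plan of proof.} Part (i) is immediate from the definition: $x\in[0,x]$ gives $T_0v(x)\le v(x)$; monotonicity of $T_0v$ follows from the inclusion $[0,x]\subseteq[0,y]$ for $x\le y$, which can only decrease the infimum; $T_0v(0)=v(0)$ is trivial, and continuity of $T_0v$ on the compact set $[0,1]$ follows from uniform continuity of $v$. The statements for $T_1v$ are obtained by symmetric arguments applied to $\tilde v(x):=v(1-x)$.

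For (iii) I would work with $T_0v$ first; the case of $T_1v$ is analogous. Set $N_0:=\{x\in(0,1):T_0v(x)<v(x)\}$, which is open by continuity of both $v$ and $T_0v$. Fix $x_0\in N_0$. By compactness, there exists $t_0\in[0,x_0]$ with $v(t_0)=T_0v(x_0)<v(x_0)$; necessarily $t_0<x_0$. By continuity of $v$ there is a neighbourhood $I$ of $x_0$ on which $v>v(t_0)$, so for every $x\in I\cap[x_0,1]$ the infimum of $v$ on $[0,x]$ is still attained at $t_0$, giving $T_0v\equiv v(t_0)$ on $I\cap[t_0,1]$ intersected with a suitable right neighbourhood of $x_0$; a similar argument on the left gives that $T_0v$ is locally constant around $x_0$. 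Hence $(T_0v)'=0$ $\calL^1$-a.e. on $N_0$.

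The substantive part is (ii): I claim the pointwise bound
\[
|T_0v(y)-T_0v(x)|\le \int_x^y|v'(t)|\,\dt\quad\text{for every }0\le x\le y\le 1,
\]
which simultaneously gives absolute continuity of $T_0v$, membership in $W^{1,p}$, and the a.e.\ bound $|(T_0v)'|\le|v'|$. To prove it, pick $\xi_x\in[0,x]$ and $\xi_y\in[0,y]$ achieving the respective minima. Since $T_0v$ is non-increasing, it suffices to bound $T_0v(x)-T_0v(y)=v(\xi_x)-v(\xi_y)\ge 0$. If $\xi_y\in[0,x]$, then $v(\xi_y)\le v(\xi_x)\le v(\xi_y)$, so the difference vanishes. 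Otherwise $\xi_y\in(x,y]$, and since $v(\xi_y)\le v(\xi_x)\le v(x)$,
\[
0\le v(\xi_x)-v(\xi_y)\le v(x)-v(\xi_y)=\int_{\xi_y}^{x}v'(t)\,\dt\le\int_{x}^{y}|v'(t)|\,\dt,
\]
establishing the bound. Combining this with the AC characterization of $W^{1,p}$ in dimension one yields $T_0v\in W^{1,p}((0,1))\cap C^0([0,1])$ with $|(T_0v)'|\le|v'|$ a.e., and the same reasoning applied to $\tilde v$ handles $T_1v$.

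The main obstacle I expect is exactly the case analysis in the displayed chain above: one has to be careful that $\xi_x,\xi_y$ are not unique in general, and that the sign of $v(x)-v(\xi_y)$ goes the right way. Once the right decomposition ``the minimum on $[0,y]$ either already lived on $[0,x]$ or is attained in $(x,y]$'' is isolated, the rest is essentially the fundamental theorem of calculus for $v\in W^{1,1}$.
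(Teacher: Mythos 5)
Your proof is correct and takes essentially the same route as the paper: the key estimate $|T_0v(x)-T_0v(y)|\le\int_x^y|v'(t)|\,\dt$, obtained by distinguishing whether the minimizer on the larger interval already lies in the smaller one or in $(x,y]$, is exactly the paper's argument for (ii), and items (i) and (iii) follow the same observations. The only cosmetic difference is in (iii), where you argue by local constancy of $T_0v$ at each point of $\{T_0v<v\}$ while the paper phrases the same fact (the minimizer does not move) in terms of constancy on connected components of that open set.
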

\begin{proof}
We give the proof of the statements for $T_0v$, that for 
$T_1v$ being analogous.

The monotonicity of $T_0v$ follows straightforwardly from the very definition, as well as the fact that $T_0v\leq v$.

Let $x_1,x_2\in (0,1)$, with $x_1<x_2$, if $T_0v(x_2)<T_0v(x_1)$
there exist $y_1 \in [0,x_1]$ and $y_2\in (x_1,x_2]$ such that 
\begin{equation*}
    v(x_1)\geq v(y_1)=T_0v(x_1)> T_0v(x_2)=v(y_2)\,.
\end{equation*}
In particular, it follows that 
\[
|T_0v(x_1)-T_0v(x_2)|\leq |v(x_1)-v(y_2)|
\leq\int_{x_1}^{x_2}|v'(t)|\dd t\,.
\]
Therefore, $T_0v$ is absolutely continuous and the statements in item (ii) holds. 

Finally, note that if $T_0v(x)=v(y)$ for some $y\in[0,x]$, then
$T_0v(t)= v(y)$ for every $t\in[y,x]$ by item (i).
Thus, if $I=(x_1,x_2)$ is a connected component of the open set 
$\{T_0v<v\}$ (by continuity of both $v$ and $T_0v$), then $T_0v(x)=v(x_1)$ for every $x\in I$,
and finally $T_0v(x_2)=v(x_1)$ by continuity of $T_0v$.
\end{proof}
\begin{proposition}\label{p:competitori monotoni}
Assume (Hp~$1$)-(Hp~$5$). For every $s\in [0,\infty)$  
\begin{equation}\label{e:calulu s1}
      g(s)=\inf_{(u,v)\in \calulu_{s,1}}\mathcal{G}(u,v).
\end{equation}

Moreover, for every $s\in (0,\infty)$, $\eta>0$, and $(u,v)\in \calulu_{s,1}$ 
there exists 
$\overline{v}\in H^1((0,1))$ such that $\overline{v}(0)=\overline{v}(1)=1$, 
$\overline{v}$ is strictly positive, strictly decreasing on $[0,x_0]$, strictly 
increasing on $[x_0,1]$, for some $x_0\in (0,1)$, and with Lipschitz inverse when 
restricted both to $[0,x_0]$ and to $[x_0,1]$, and such that
\begin{equation}\label{e:Goverline uv vs Guv}
    \mathcal{G}(u,\overline{v}) \leq \mathcal{G}(u,v)+\eta\,.
\end{equation}
\end{proposition}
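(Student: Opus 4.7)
For the equality $g(s)=\inf_{\calulu_{s,1}}\mathcal{G}$, the inclusion $\calulu_s\subset\calulu_{s,1}$ yields one direction for free. For the converse, given $(u,v)\in\calulu_{s,1}$ with $u\in W^{1,1}((0,1))$, I would approximate $u$ by a sequence $u_n\in H^1((0,1))$ with $u_n(0)=0$, $u_n(1)=s$ and $u_n\to u$ in $W^{1,1}$ (e.g.\ by mollification followed by a small affine correction at the endpoints), so that $(u_n,v)\in\calulu_s$, and then invoke the $W^{1,1}$-continuity of $\mathcal{G}$ recalled right before \eqref{e:definizione g assegnata}.

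For the second claim, the plan is to first pick $x_0\in(0,1)$ with $v(x_0)=\min_{[0,1]}v$, which is always possible since $v(0)=v(1)=1$ (in the degenerate case $v\equiv1$ any $x_0\in(0,1)$ works), and then set
\[
\widetilde v(x):=\begin{cases}T_0v(x)&\text{if }x\in[0,x_0],\\T_1v(x)&\text{if }x\in[x_0,1],\end{cases}
\]
using the operators of Lemma~\ref{l:mantieni regolarità}. Since $T_0v(x_0)=T_1v(x_0)=v(x_0)$, $\widetilde v$ is a well-defined element of $H^1((0,1))$ with $\widetilde v(0)=\widetilde v(1)=1$, non-increasing on $[0,x_0]$ and non-decreasing on $[x_0,1]$. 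The crucial energy comparison $\mathcal{G}(u,\widetilde v)\le\mathcal{G}(u,v)$ would be proved on each piece by splitting into $\{\widetilde v=v\}$, where the two integrands coincide, and $\{\widetilde v<v\}$, where item (iii) of the lemma forces $\widetilde v'=0$ a.e.\ so that the integrand reduces to $\hatk(\widetilde v)^{\sfrac12}|u'|$, which by (Hp~$5$) and $\widetilde v\le v$ is dominated by $\hatk(v)^{\sfrac12}|u'|$ and hence by the integrand for $v$.

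To upgrade $\widetilde v$ to the required $\overline v$, I would set $\overline v_\tau:=(1-\tau)\widetilde v+\tau\ell$ for small $\tau\in(0,1)$, where $\ell$ is the piecewise affine V-shape equal to $1-(1-\delta)x/x_0$ on $[0,x_0]$ and to $1-(1-\delta)(1-x)/(1-x_0)$ on $[x_0,1]$, for a fixed $\delta\in(0,1)$. The boundary conditions $\overline v_\tau(0)=\overline v_\tau(1)=1$ are preserved; since $\widetilde v$ is monotone on each piece and $\ell$ is strictly affine there, $\overline v_\tau'\le-\tau(1-\delta)/x_0<0$ a.e.\ on $[0,x_0]$ and $\overline v_\tau'\ge\tau(1-\delta)/(1-x_0)>0$ a.e.\ on $[x_0,1]$, yielding simultaneously strict monotonicity and the Lipschitz invertibility of each restricted branch; the lower bound $\overline v_\tau\ge\tau\delta$ gives strict positivity. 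Finally, $\overline v_\tau\to\widetilde v$ in $H^1$ as $\tau\to0$, so choosing $\tau$ small produces $\mathcal{G}(u,\overline v_\tau)\le\mathcal{G}(u,v)+\eta$ by continuity of $\mathcal{G}(u,\cdot)$ along $H^1$-convergent sequences in the second argument.

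The main technical point I foresee is this last continuity statement when $u$ is only in $W^{1,1}$, so that $|u'|\notin L^2$ in general. I would derive it from the pointwise bound $|\sqrt a-\sqrt b|\le\sqrt{|a-b|}$ combined with the uniform convergence of $\hatk(\overline v_\tau)$ and $\PotDann(1-\overline v_\tau)$ and the $L^2$-convergence of $\overline v_\tau'$, producing an $L^1$-dominating bound linear in $|u'|$ and $|\overline v_\tau'|$ that vanishes in the limit.
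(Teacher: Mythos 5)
Your proof of the first identity takes a genuinely different route from the paper: you approximate $u$ by mollification and appeal to $W^{1,1}$-continuity of $\mathcal{G}$ in the $u$-variable, whereas the paper builds $u_k$ from an increasing sequence $z_k\nearrow|u'|$ normalised so that $|u_k'|\le \tfrac{s}{\int z_1}\,|u'|$, which gives the domination $\hatk^{\sfrac12}(v)|u_k'|\le C\,\hatk^{\sfrac12}(v)|u'|\in L^1$ for free. With mollification this domination is not automatic when $\hatk(1^-)=\infty$: near points where $v$ is close to $1$ you must bound $\hatk^{\sfrac12}(v)|u_n'|$ and $|u_n'|$ does not inherit the smallness that finiteness of $\mathcal{G}(u,v)$ imposes on $|u'|$ there. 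The gap is minor (it can be filled by truncating $|u'|$ instead of mollifying), but as stated the invocation of ``$W^{1,1}$-continuity'' is a claim, not a proof.

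The second half has a real gap, and it is exactly where your construction departs from the paper's. You replace the paper's downward perturbation $\hat v-\delta\zeta$ (with $\zeta(0)=\zeta(1)=0$, $\zeta(x_0)=1$) by the convex combination $\overline v_\tau=(1-\tau)\widetilde v+\tau\ell$. This neatly avoids the truncation $v\vee\tfrac1k$ because $\overline v_\tau\ge\tau\delta>0$ automatically — that is an attractive simplification. But the price is that $\overline v_\tau\le\widetilde v$ no longer holds: on $\{\ell>\widetilde v\}$ you get $\overline v_\tau>\widetilde v$, which can certainly happen near $x=0$ or $x=1$ whenever $\widetilde v$ falls away from $1$ faster than your chosen affine $\ell$. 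There $\hatk(\overline v_\tau)>\hatk(\widetilde v)$, and when $\hatk(1^-)=\infty$ the function $\hatk$ is not uniformly continuous near $t=1$, so your claimed ``uniform convergence of $\hatk(\overline v_\tau)$'' fails: $\hatk(\overline v_\tau(x))-\hatk(\widetilde v(x))$ blows up as $x\to0^+$ or $x\to1^-$ for any fixed $\tau$. Correspondingly, there is no obvious $L^1$-dominating function for $|\hatk(\overline v_\tau)-\hatk(\widetilde v)|^{\sfrac12}|u'|$ on $\{\ell>\widetilde v\}$; the natural candidate $\hatk^{\sfrac12}(\overline v_{\tau_0})|u'|$ is not controlled by $\hatk^{\sfrac12}(\widetilde v)|u'|\in L^1$ unless $\hatk$ satisfies some doubling-type condition, which (Hp~$5$) does not give.

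The paper's choice $\hat v-\delta\zeta$ is precisely engineered so that it stays $\le\hat v$ everywhere, which together with the monotonicity of $\hatk$ makes $\hatk^{\sfrac12}(\hat v)|u'|$ an $L^1$-dominator of the $u$-part of the integrand; the $v$-part is dominated using $\|\PotDann\|_\infty$ and $|\hat v'|+|\zeta'|\in L^2$. The necessary cost is that one must first make $v$ (hence $\hat v$) strictly positive via $v\vee\tfrac1k$, which is the step you skipped. To repair your argument, either keep the truncation and then perturb downward as the paper does, or, if you insist on the interpolation $\overline v_\tau$, handle $\{\ell>\widetilde v\}$ separately (for instance, choose $\ell$ to coincide with $\widetilde v$ there, i.e.\ take $\ell\wedge\widetilde v$, and adjust the V-shape accordingly so that it still forces strict monotonicity on each branch).
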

\begin{proof}
 Fix $(u,v)\in \calulu_{s,1}$ such that $\mathcal{G}(u,v)<\infty$. Let $(z_k)$ be an increasing sequence of positive functions in $L^{\infty}((0,1))$ such that $z_k(x)\to |u'(x)|$ for $\calL^1$-a.e. $x\in (0,1)$. In particular setting 
 \begin{equation*}
     w_k:=\frac{s}{\int_0^1 z_k \dx}z_k
 \end{equation*}
 we have that $(u_k,v)\in \calulu_{s,1}$ with $u_k(t):=\int^t_0 w_k \dx\in W^{1,\infty}((0,1))$ and 
 \begin{equation*}
     \lim_{k\to \infty}\mathcal{G}(u_k,v)\leq \mathcal{G}(u,v)\,,
 \end{equation*}
so that \eqref{e:calulu s1} easily follows.

Let $v$ be as in the statement above, without loss of generality we assume that $v\in C^0([0,1])$. 
Let $v_k:=v \lor \frac{1}{k}$ then $v_k(x) \to v(x)$, $\PotDann(1-v_k(x))|v'_k(x)|^2=0$ $\calL^1$-a.e. on $\{v\leq\frac1k\}$, and $v_k=v$ otherwise, so that 
$\PotDann(1-v_k(x))|v'_k(x)|^2\leq\PotDann(1-v(x))|v'(x)|^2$ $\calL^1$-a.e. $(0,1)$. In particular, 
$\mathcal{G}(u,v_k)\to\mathcal{G}(u,v)$ for $k\to\infty$, 
as for every $k\geq2$   
 \begin{align*}
  &   \mathcal{G}(u,v_k)\leq \int_0^1 \Big(\hatk(v_k) |u'|^2 + \PotDann(1-v)|v'|^2\Big)^{\sfrac12} \dx \\
  & = \int_{\{v\leq \frac{1}{2}\}} \Big(\hatk(v_k) |u'|^2 + \PotDann(1-v)|v'|^2\Big)^{\sfrac12} \dx
  + \int_{\{v> \frac{1}{2}\}} \Big(\hatk(v) |u'|^2 + \PotDann(1-v)|v'|^2\Big)^{\sfrac12} \dx.
 \end{align*}
Hence, without loss of generality we can suppose $v$ strictly positive.
Let then $x_0\in (0,1)$ be a global minimum point of $v$, and 
define $\hat{v}:[0,1]\to [0,1]$ by
\[
\hat v(x):=T_0v(x)\chi_{[0,x_0]}(x)
+T_1v(x)\chi_{[x_0,1]}(x)\,.
\]
Lemma \ref{l:mantieni regolarità} yields that 
  $\hat v\in H^1((0,1))$, $\hat v(0)=\hat v(1)=1$, $\hat v$ is non-increasing on $[0,x_0]$ and non-decreasing on $[x_0,1]$.
  Moreover, by item (iii) there
  \begin{equation*}
 \mathcal{G}(u,\hat{v})\leq \mathcal{G}(u,v)\,
  \end{equation*}
  as $\hatk$ is non-decreasing and $\hat v'=0$ $\calL^1$-a.e. on $\{\hat v \neq v\}$. Next, consider the piecewise affine function
  \begin{equation*}
       \zeta(x):=
       \begin{cases}
           \frac{x}{x_0}\; & \textup{if $0\leq x \leq x_0$} \\
           -\frac{x-x_0}{1-x_{0}}+1 \; & \textup{if $x_0\leq x \leq 1$}, \\
       \end{cases}
   \end{equation*}
observe that $\hat v-\delta \zeta$ is strictly positive for $\delta$ small enough, and converges to $\hat v$ monotonically as $\delta\to0$, so that $\mathcal{G}(u,\hat v-\delta \zeta)\to \mathcal{G}(u,\hat v)$ as $\delta \to 0$. 
Therefore, \eqref{e:Goverline uv vs Guv} is satisfied. Moreover, the restrictions of $\hat v-\delta \zeta$ to $[0,x_0]$ and $[x_0,1]$ are invertible with, respectively, derivative less than or equal to $-\delta$ and bigger than or equal to $\delta$, and thus they are Lipschitz continuos.
\end{proof}
We are now ready to give a new characterization of $g$ as a minimimum problem obtained by first minimizing over the variable $u$ (belonging to  a suitable admissible set) a family of functionals labeled through the minimimum value of $v$, and then minimizing on the latter. 
This is alternative to the approach in \cite{BCI,BI23} where $g$ is
expressed as a minimum problem of a single functional depending on $v$ loosely speaking using $u$ to change variables. With this aim it is convenient to denote by $\zeta_{m,s}$ the unique affine function with $\zeta_{m,s}(2m-1)=0$ and $\zeta_{m,s}(1)=s$. 

\begin{proposition}\label{p:cambio variabile in v old}
Assume (Hp~$1$)-(Hp~$5$). For every $m\in (0,1)$ let $G_m:W^{1,1}((2m-1,1))\to [0,\infty]$ be given by
\begin{equation}\label{e:funzionale con parametro m}
    G_m(w):=\int_{2m-1}^1\Big(\hatk_{m}(t)|w'|^2+\PotDann_{m}(1-t)\Big)^{\sfrac12}\dt
\end{equation}
where $\hatk_m$ and $\PotDann_m$ are, respectively, the even extensions to $[2m-1,m]$ with respect to $t=m$ of $\hatk$ and $\PotDann(1-\cdot)$ restricted to $[m,1]$. 
Then, for every $s\in[0,\infty)$
\begin{align}\label{e:espressione g doppio inf old}
     g(s)&=\inf_{m\in (0,1)}\inf \{G_m(w) \; : \; w\in \zeta_{m,s}+W^{1,1}_0((2m-1,1)) \}\,.
     \end{align}
\end{proposition}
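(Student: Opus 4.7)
The plan is to use Proposition~\ref{p:competitori monotoni} to reduce every competitor $(u,v)\in \calulu_{s,1}$, up to an arbitrarily small error $\eta>0$, to one in which $v$ is strictly decreasing on some $[0,x_0]$ from $1$ to $m:=\min v\in(0,1)$ and strictly increasing on $[x_0,1]$ from $m$ to $1$, with Lipschitz inverses on each piece. Denote by $v_-:[0,x_0]\to[m,1]$ and $v_+:[x_0,1]\to[m,1]$ the two monotone restrictions, and set $u_\pm(t):=u(v_\pm^{-1}(t))$ for $t\in[m,1]$. The Lipschitz regularity of the two inverses makes $u_\pm$ belong to $W^{1,1}((m,1))$, legitimizes the chain rule $\calL^1$-a.e., and yields, by the one-dimensional change of variable,
\[
\mathcal{G}(u,v)=\int_m^1\Big(\hatk(t)|u_-'|^2+\PotDann(1-t)\Big)^{\sfrac12}\dt+\int_m^1\Big(\hatk(t)|u_+'|^2+\PotDann(1-t)\Big)^{\sfrac12}\dt.
\]
Gluing $u_-$ and $u_+$ by even reflection about $t=m$, namely $w(t):=u_-(2m-t)$ on $[2m-1,m]$ and $w(t):=u_+(t)$ on $[m,1]$, produces $w\in W^{1,1}((2m-1,1))$ with $w(2m-1)=u_-(1)=u(0)=0$ and $w(1)=u_+(1)=u(1)=s$, hence $w\in\zeta_{m,s}+W^{1,1}_0((2m-1,1))$. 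By the very definition of $\hatk_m$ and $\PotDann_m$ as even reflections about $t=m$, the right-hand side above equals $G_m(w)$. Letting $\eta\to 0^+$ and infimizing over $(u,v)\in\calulu_{s,1}$ (using \eqref{e:calulu s1}) yields the bound $g(s)\geq\inf_{m\in(0,1)}\inf\{G_m(w):w\in\zeta_{m,s}+W^{1,1}_0((2m-1,1))\}$.

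For the reverse inequality, given $m\in(0,1)$ and $w\in\zeta_{m,s}+W^{1,1}_0((2m-1,1))$, I define $w_-(t):=w(2m-t)$ and $w_+(t):=w(t)$ on $[m,1]$, pick any $x_0\in(0,1)$ together with the affine bijections $v_-:[0,x_0]\to[m,1]$ decreasing from $1$ to $m$ and $v_+:[x_0,1]\to[m,1]$ increasing from $m$ to $1$, and set $v:=v_\pm$ on the respective intervals, $u:=w_\pm\circ v_\pm$. Then $v\in W^{1,\infty}((0,1))\subset H^1$, $u\in W^{1,1}$ by composition with a Lipschitz map, and the boundary conditions $u(0)=w(2m-1)=0$, $u(1)=w(1)=s$, $v(0)=v(1)=1$ hold by construction, so that $(u,v)\in\calulu_{s,1}$. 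Running the same change of variable in reverse gives $\mathcal{G}(u,v)=G_m(w)$, whence $g(s)\leq G_m(w)$, and infimizing over $w$ and $m$ completes the proof.

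The principal technical point is that the piecewise change of variable must be carried out rigorously when $u$ is merely $W^{1,1}$ and $v$ merely $H^1$. This is exactly the reason why Proposition~\ref{p:competitori monotoni} is engineered to furnish competitors whose two monotone branches have Lipschitz inverses: composition with such inverses preserves absolute continuity, validates the chain rule $\calL^1$-a.e., and therefore both directions of the change of variable go through within the class $\calulu_{s,1}$. The strict positivity of $m=\min\overline v$ guaranteed by the same proposition is what keeps $m\in(0,1)$ throughout.
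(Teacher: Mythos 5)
Your proof is correct and follows essentially the same route as the paper: reduce via Proposition~\ref{p:competitori monotoni} to competitors whose two monotone branches have Lipschitz inverses, then apply the one-dimensional change of variable (area formula) to pass between $\mathcal{G}(u,\overline v)$ and $G_m(w)$, where $w$ is the even-reflected composition $u\circ v_\pm^{-1}$. The only cosmetic difference is that for the reverse inequality you explicitly pick affine $v_\pm$, whereas the paper reuses the generic $v$ from Proposition~\ref{p:competitori monotoni}; both work for the same reason.
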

\begin{proof}
Let $s\in (0,\infty)$ and $m\in (0,1)$. In view of Proposition~\ref{p:competitori monotoni} we may take $v\in H^1((0,1))\cap C^0([0,1])$ such that $0\leq v\leq 1$, $v(0)=v(1)=1$,
$v(x_0)=\min_{[0,1]}v=m$, for some $x_0\in (0,1)$,
and $v|_{[0,x_0]}$ and $v|_{[x_0,1]}$ invertible with
Lipschitz continuous inverses. For every 
$u\in \zeta_{\sfrac{1}{2},s}+W^{1,1}_0((0,1))$, define
$w\in \zeta_{m,s}+W^{1,1}_0((2m-1,1))$ by 
 \begin{equation}\label{e:cambio di variabile per la w nel nuovo funzionale}
     w(t)=\begin{cases}
         u\circ (v|_{[0,x_0]})^{-1}(2m-t) \; & t\in[2m-1,m] \\
         u\circ (v|_{[x_0,1]})^{-1}(t) \; & t\in[m,1].
     \end{cases}
 \end{equation}
By changing variable on the intervals $[0,x_0]$ and $[x_0,1]$ by means of the one-dimensional area formula \cite[Theorem~3.4.6]{AT04} we obtain 
the equality $\mathcal{G}(u,v)=G_m(w)$.
 Vice versa, being $v$ absolutely continuous, given $w\in\zeta_{m,s}+W^{1,1}_0((2m-1,1))$ one can define $u\in\zeta_{\sfrac{1}{2},s}+W^{1,1}_0((0,1))$ via \eqref{e:cambio di variabile per la w nel nuovo funzionale} in a way that $\mathcal{G}(u,v)=G_m(w)$. In particular, by equality \eqref{e:calulu s1} in Proposition~\ref{p:competitori monotoni}
 we can conclude that \eqref{e:espressione g doppio inf old} holds.
\end{proof}

\subsection{Analysis of the equivalent characterization}\label{ss:equivalent characterization}

In this section we study the new characterization of $g$ provided in Proposition~\ref{p:cambio variabile in v old}.
With this aim, we recall first a relaxation result (cf. \cite[Section~2]{GMS79}, \cite{BB90}).
We recall that we have adopted the convention $0\cdot\infty=0$.
\begin{proposition}\label{p:prop. rilassamento G old}
Assume (Hp~$1$)-(Hp~$5$). Let $m\in (0,1)$,  $s\in (0,\infty)$, and $G_{m,s}:L^1((2m-1,1))\to [0,\infty]$ be the functional given by
\begin{equation}\label{e:funzionale Gms}
    G_{m,s}(w):=\begin{cases}
        G_{m}(w) \; & \textup{if $w\in \zeta_{m,s}+W^{1,1}_0((2m-1,1)),$} \\
        \infty \; &\textup{otherwise},
    \end{cases}
\end{equation}
where $G_m$ is defined in \eqref{e:funzionale con parametro m}.
Then the $L^1$-lower semicontinuous envelope of $G_{m,s}$ is given by
\begin{align}\label{e:rilassato old}
   sc^-G_{m,s}(w)=&\int_{2m-1}^1\Big(\hatk_{m}(t)|w'|^2+\PotDann_{m}(1-t)\Big)^{\sfrac12}\dt
   + \int_{2m-1}^1\hatk_{m}^{\sfrac12}(t)\dd|D^sw| \nonumber \\ &
+ \hatk^{\sfrac12}(1^-)|w((2m-1)^+)|+\hatk^{\sfrac12}(1^-)|w(1^-)-s|, 
\end{align}
for every $w\in BV((2m-1,1))$, and $+\infty$ otherwise on $L^1((2m-1,1))$.
In particular, $w((2m-1)^+)=w(1^-)-s=0$ if $\hatk(1^-)=\infty$. 

Moreover, the following holds
\begin{equation}\label{e:doppio inf rilassato G}
    g(s)=\inf_{m\in (0,1)}\inf_{w\in L^1((2m-1,1))}sc^{-}G_{m,s}(w).\,
\end{equation}
\end{proposition}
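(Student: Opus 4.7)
\medskip

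\noindent\textbf{Plan of proof for Proposition~\ref{p:prop. rilassamento G old}.}
The statement splits naturally into two parts: an explicit relaxation formula for $sc^-G_{m,s}$, and the identification of its double infimum with $g(s)$. The integrand
\[
h_m(t,p):=\bigl(\hatk_m(t)p^2+\PotDann_m(1-t)\bigr)^{\sfrac12}
\]
is convex in $p$, continuous in $(t,p)$ on $[2m-1,1]\times\R$ whenever $\hatk(1^-)<\infty$ (and locally continuous on $[2m-1,1)\times\R$ in the opposite case), and it has linear growth in $p$ with recession function
\[
h_m^\infty(t,p)=\hatk_m^{\sfrac12}(t)\,|p|\,.
\]
By (Hp~$5$) the function $\hatk$ is strictly increasing and hence $\hatk_m$ is continuous on $[2m-1,1]$, non-increasing on $[2m-1,m]$ and non-decreasing on $[m,1]$, with $\hatk_m(2m-1)=\hatk_m(1)=\hatk(1^-)$. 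Thus $h_m$ fits the classical framework in which the $L^1$ relaxation on $BV$ of an integral functional with convex integrand of linear growth is known.

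The first step is to invoke the abstract relaxation theorems of \cite{GMS79,BB90} with Dirichlet datum equal to the affine function $\zeta_{m,s}$. These results yield that $sc^-G_{m,s}$ is finite exactly on $BV((2m-1,1))$, and on that space it decomposes as the integral of $h_m(t,w')$ with respect to $\calL^1$, the singular contribution $\int \hatk_m^{\sfrac12}(t)\dd|D^sw|$ (obtained via the recession $h_m^\infty$ applied to the polar of the Radon--Nikodym derivative of $D^sw$), and two boundary penalties $\hatk^{\sfrac12}(1^-)|w((2m-1)^+)-0|$ and $\hatk^{\sfrac12}(1^-)|w(1^-)-s|$ that account for the possible loss of the boundary traces under $L^1$ convergence. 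These boundary terms are precisely $h_m^\infty$ evaluated at the jump between the interior trace and the prescribed value of $\zeta_{m,s}$ at each endpoint. The main technical check here is to verify the hypotheses of \cite{GMS79,BB90}: convexity of $p\mapsto h_m(t,p)$, continuity (or Carath\'eodory character) of $h_m$, and the linear growth estimate $C^{-1}|p|-C\leq h_m(t,p)\leq C(|p|+1)$ away from $t=1$ and $t=2m-1$. The behaviour of $\hatk_m$ at the reflected endpoint follows from the even extension and (Hp~$5$).

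The case $\hatk(1^-)=\infty$ (that is, $\FailureS=\infty$) requires a short separate argument. One truncates $\hatk_m$ by $\hatk_m\wedge N$, applies the previous step to obtain a relaxation with boundary penalty of size $N^{\sfrac12}|\cdot|$, and lets $N\to\infty$: any competitor with finite relaxed energy must then satisfy $w((2m-1)^+)=0$ and $w(1^-)=s$, which gives the convention stated after \eqref{e:rilassato old}. A standard diagonal argument using recovery sequences supplied by \cite{GMS79,BB90} (smooth approximation combined with a boundary layer to match the Dirichlet datum) shows that the formula is attained on $BV$.

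For the second part, the relaxation is defined as the largest $L^1$-lsc functional below $G_{m,s}$, so the classical fact $\inf_{L^1}G_{m,s}=\inf_{L^1}sc^-G_{m,s}$ applies for each $m\in(0,1)$. Combining this with the characterisation \eqref{e:espressione g doppio inf old} from Proposition~\ref{p:cambio variabile in v old},
\[
g(s)=\inf_{m\in(0,1)}\inf_{w\in\zeta_{m,s}+W^{1,1}_0((2m-1,1))}G_m(w)
=\inf_{m\in(0,1)}\inf_{L^1}G_{m,s}\,,
\]
yields \eqref{e:doppio inf rilassato G}. I expect the main obstacle to be the endpoint analysis, namely checking that the abstract relaxation results of \cite{GMS79,BB90} deliver precisely the boundary penalty $\hatk^{\sfrac12}(1^-)$ at the reflected endpoint $t=2m-1$ (for which one really uses the even symmetry of $\hatk_m$ and $\PotDann_m$ around $t=m$), and that the passage $N\to\infty$ in the case $\FailureS=\infty$ rigorously forces the Dirichlet conditions without destroying the recovery sequence construction.
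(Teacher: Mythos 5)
Your plan matches the paper's proof in its overall structure: for $\hatk(1^-)<\infty$ the relaxation formula (interior term, recession term $\hatk_m^{\sfrac12}|D^sw|$, boundary penalties with weight $\hatk^{\sfrac12}(1^-)$ at both endpoints, the left one indeed coming from the even reflection) is exactly what the paper takes from \cite{GMS79,BB90}; the truncation $\hatk_m\wedge N$ plus monotone convergence is precisely the paper's lower-bound argument in the case $\hatk(1^-)=\infty$, and it does force $w((2m-1)^+)=0$, $w(1^-)=s$ for finite relaxed energy; and the identification \eqref{e:doppio inf rilassato G} via $\inf_{L^1}G_{m,s}=\inf_{L^1}sc^-G_{m,s}$ combined with Proposition~\ref{p:cambio variabile in v old} is the intended (and in the paper implicit) conclusion.

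The genuine gap is the upper-bound (recovery-sequence) inequality when $\hatk(1^-)=\infty$, which you dispatch with ``smooth approximation combined with a boundary layer to match the Dirichlet datum'' and then yourself flag as the main obstacle. This is exactly where the work lies: the weight $\hatk_m^{\sfrac12}$ blows up at $t=2m-1$ and $t=1$, so a boundary layer that corrects the trace of an approximating sequence pays a gradient cost of order $\hatk_m^{\sfrac12}(\text{layer})\cdot|\text{correction}|$, and it is not automatic that this is negligible; one must exploit that $\hatk_m$ is monotone near the endpoints and that finiteness of the relaxed energy makes the weighted variation $\int\hatk_m^{\sfrac12}\,\dd|Dw|$ small on small neighbourhoods of the endpoints. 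The paper circumvents the boundary layer altogether by a two-step reduction: first it proves the limsup inequality for $w$ constant ($\equiv 0$, resp.\ $\equiv s$) near the endpoints, where inner recovery sequences from the finite case can be extended by constants at the sole cost of the integrable potential term $\PotDann_m^{\sfrac12}$; then, for general $w$ attaining the boundary values, it compresses $w$ into $[2m-1+\lambda,1-\lambda]$ via the affine map $\varphi_\lambda$, checks $\hatk_m(\varphi_\lambda(t))\leq\hatk_m(t)$ thanks to the V-shaped monotonicity of $\hatk_m$ about $t=m$, and concludes by dominated convergence and the $L^1$-lower semicontinuity of $sc^-G_{m,s}$ applied to $w_\lambda\to w$. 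Either route can be made rigorous, but as written your proposal leaves this step as an acknowledged unproved claim rather than an argument; if you want to keep the boundary-layer strategy, you need to insert precisely the quantitative control sketched above (transition adjacent to $1-\delta$ where $\hatk_m$ is still comparable to $\hatk_m(1-\delta)$, and the bound $\hatk_m^{\sfrac12}(1-\delta)\,|Dw|((1-\delta,1))\leq\int_{(1-\delta,1)}\hatk_m^{\sfrac12}\,\dd|Dw|\to 0$).
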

\begin{proof}
If $\hatk(1^-)<\infty$ the claim is a direct consequence of \cite[Section~2]{GMS79}. 

Instead, if $\hatk(1^-)=\infty$, 
we first prove a lower bound for the relaxed functional by approximation. 
Indeed, fix $s\in (0,\infty)$, $m\in (0,1)$ and denote by $\mathscr{G}_{m,s}$ the functional on the right hand side of \eqref{e:rilassato old}. 
Let $\hatk^{(j)}(t):=\hatk(t)\wedge j$, $t\in[0,1]$, and apply the equality in \eqref{e:rilassato old} to the corresponding functional in \eqref{e:funzionale Gms} obtained by substituting $\hatk_m$ with $\hatk^{(j)}_m$. By monotone convergence it is then easy to conclude that $sc^-G_{m,s}(w)\geq\mathscr{G}_{m,s}(w)$
for every $w\in BV((2m-1,1))$ with $w(2m-1)=0$ and $w(1)=s$, and that $sc^-G_{m,s}(w)=\infty$ otherwise. 

For the reverse inequality, first choose $w\in BV((2m-1,1))$ with $w\equiv 0$ on 
$(2m-1,2m-1+\lambda)$ and $w\equiv s$ on $(1-\lambda,1)$, for some $\lambda$ sufficiently small. Thanks to the result in the case $\hatk(1^-)<\infty$, 
which actually holds for every interval $(a,b)\subset(0,1)$, 
there exists e sequence $w_j\in H^1((2m-1+\lambda,1-\lambda))$ such that $w_j\to w$ in $L^1((2m-1+\lambda,1-\lambda))$ as $j\to \infty$ and 
\begin{align}\label{e:energia wj lambda}
    \lim_{j\to \infty} &\int_{2m-1+\lambda}^{1-\lambda}
    \Big(\hatk_{m}(t)|w_j'|^2+\PotDann_{m}(1-t)\Big)^{\sfrac12}\dt\notag\\ &
    =\int_{2m-1+\lambda}^{1-\lambda}\Big(\hatk_{m}(t)|w'|^2+\PotDann_{m}(1-t)\Big)^{\sfrac12}\dt
   + \int_{2m-1+\lambda}^{1-\lambda}\hatk_{m}^{\sfrac12}(t)\dd|D^sw|\,.
\end{align}
Clearly, extending $w_j$ to $0$ on $(2m-1,2m-1+\lambda)$, and to $s$ on $(1-\lambda,1)$ implies the $L^1((2m-1,1))$ convergence of the extensions, still denoted by $w_j$ with a slight abuse of notation, to $w$. Moreover, using \eqref{e:energia wj lambda} 
a simple calculation shows that
\begin{equation*}
  sc^-G_{m,s}(w)\leq  \lim_{j\to \infty}G_{m,s}(w_j)= \mathscr{G}_{m,s}(w)\,,
\end{equation*}
and thus the identity in \eqref{e:rilassato old} follows for $w$ as above. 
To establish the latter in general, let $w\in BV((2m-1,1))$ with $w(2m-1)=0$, $w(1)=s$ and
$\mathscr{G}_{m,s}(w)<\infty$, otherwise the proof is completed by the lower bound. 
For every $\lambda$ sufficiently small let $\varphi_\lambda:[2m-1,1]\to[2m-1+\lambda,1-\lambda]
$ given by $\varphi_\lambda(t):=m_\lambda(t-m)+m$, with $m_\lambda:=\frac{1-m-\delta}{1-m}$.
Then define the function $w_\lambda\in BV((2m-1,1))$ by 
\begin{equation}
    w_\lambda(\tau):=
    \begin{cases}
        0 \; &\textup{$\tau\in[2m-1,2m-1+\lambda]$} \\
        w(\varphi_\lambda^{-1}(\tau)) \; &\textup{$\tau\in[2m-1+\lambda,1-\lambda]$} \\
        s  \; &\textup{$\tau\in[1-\lambda,1]$}
    \end{cases}.
\end{equation}
Note that $w_\lambda\to w$ in $L^1((2m-1,1))$ as $\lambda \to 0$, 
and moreover that $D w_\lambda
=(\varphi_\lambda)_\#(D w)$ (the push-forward measure through $\varphi_\lambda$). 
Thus, we compute as follows
\begin{align*}
sc^-G_{m,s}&(w_\lambda) \\ =&\int_{2m-1+\lambda}^{1-\lambda}\Big(\hatk_{m}(\tau)|w_\lambda'|^2+\PotDann_{m}(1-\tau)\Big)^{\sfrac12}\dd \tau
   + \int_{2m-1-\lambda}^{1-\lambda}\hatk_{m}^{\sfrac12}(\tau)\dd|D^sw_\lambda| \\ 
   & + \int_{2m-1}^{2m-1+\lambda}\PotDann_m^{\sfrac12}(1-\tau) \dd\tau + \int_{1-\lambda}^{1}\PotDann_m^{\sfrac12}(1-t) \dd\tau \\ =&\int_{2m-1}^{1}\Big(\hatk_{m}(\varphi_\lambda(t))
   |w'|^2+\PotDann_{m}(1-\varphi_\lambda(t))\, 
    m_\lambda^{2}\Big)^{\sfrac12}\,\dt \\ &
   + \int_{2m-1}^{1}\hatk_{m}^{\sfrac12}(\varphi_\lambda(t))\dd|D^sw|   
   + 2\int_{1-\lambda}^{1}\PotDann^{\sfrac12}(1-t) \dt\\
   \leq&\int_{2m-1}^{1}\Big(\hatk_{m}(t)
   |w'|^2+\PotDann_{m}(1-\varphi_\lambda(t))\Big)^{\sfrac12}\,\dt \\ &
   + \int_{2m-1}^{1}\hatk_{m}^{\sfrac12}(t)\dd|D^sw|   
   + 2\int_{1-\lambda}^{1}\PotDann^{\sfrac12}(1-t) \dt\,,
\end{align*}
where in the last inequality we have used that $m_\lambda<1$, and that $\hatk_{m}(\varphi_\lambda(t))\leq \hatk_m(t)$ on $[2m-1,1]$ by taking into account that $\hatk_m$ is decreasing on $[2m-1,m]$ together with the inequality $\varphi_\lambda(t)\geq t$ on the same interval, 
and increasing on $[m,1]$ and the inequality $\varphi_\lambda(t)\leq t$ on the same interval. 
The conclusion the follows at once by Dominated Convergence Theorem and then 
$L^1((2m-1,1))$ lower semicontinuity of $sc^-G_{m,s}$.
\end{proof}

Next, we investigate several properties of the minimizers of $G_{m,s}$ at $m\in(0,1)$ and $s\in(0,\infty)$ fixed.
\begin{theorem}\label{t:regolarità minimo old}
Assume (Hp~$1$)-(Hp~$5$).
For every $m\in (0,1)$ and $s\in (0,\infty)$, let $G_{m,s}$ be defined in \eqref{e:funzionale Gms}.     
Then there exists a function $w_{m,s}$ such that
    \begin{equation}\label{e:def wms}
        sc^{-}G_{m,s}(w_{m,s})=\inf_{w\in L^1((2m-1,1))}sc^{-}G_{m,s}(w).
    \end{equation}
Moreover, $w_{m,s}$ is the unique minimizer of $sc^{-}G_{m,s}$ on $L^1((2m-1,1))$, $w_{m,s}\in SBV((2m-1,1))$,
is strictly increasing and odd symmetric with respect to $t=m$, $w_{m,s}(2m-1)=0$, $w_{m,s}(1)=s$, and
for (a unique) $\lambda_{m,s}\in (0,\hatk^{\sfrac12}(m)]$
\begin{equation}\label{e:el}
             w_{m,s}'(t)=\left(\frac{\lambda_{m,s}^2\PotDann_m(1-t)}{\hatk_m(t)(\hatk_m(t)-\lambda^2_{m,s})}\right)^{\sfrac12}\,
\end{equation}
for $\calL^{1}$-a.e $t\in (2m-1,1)$, and either
    \begin{itemize}
\item[(1)] $w_{m,s}\in W^{1,1}((2m-1,1))$, with $w_{m,s}(m)=\frac s2$;
            \end{itemize}
or
\begin{itemize}
    \item[(2)] $w_{m,s}\in SBV((2m-1,1))$, with $D^jw_{m,s}$ concentrated on $t=m$, and $\lambda_{m,s}=\hatk^{\sfrac12}(m)$.
        \end{itemize}
\end{theorem}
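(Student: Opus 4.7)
The plan is to combine the direct method with symmetry reduction and then extract the structural statements from the Euler--Lagrange equation.

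First I would exploit the symmetries of the integrand to restrict the class of competitors. By convexity of $\xi\mapsto(\hatk_m(t)\xi^2+\PotDann_m(1-t))^{\sfrac12}$ in $\xi$, together with the identities $\hatk_m(2m-t)=\hatk_m(t)$ and $\PotDann_m(1-(2m-t))=\PotDann_m(1-t)$, the symmetrization $\tilde w(t):=\tfrac12(w(t)+s-w(2m-t))$ does not increase the relaxed energy \eqref{e:rilassato old} and is odd symmetric about $t=m$. A monotone rearrangement of the absolutely continuous part on each of $[2m-1,m]$ and $[m,1]$, together with redistribution of any singular mass, further reduces the minimization to nondecreasing competitors. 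On this restricted class the boundary conditions force $0\leq w\leq s$ and $Dw((2m-1,1))\leq s$, so minimizing sequences are $BV$-bounded; Helly's selection theorem yields pointwise convergence up to subsequences, hence $L^1$ convergence by dominated convergence, and the $L^1$-lower semicontinuity of $sc^{-}G_{m,s}$ (by construction) produces a minimizer $w_{m,s}$ that is nondecreasing and odd symmetric, with $w_{m,s}(2m-1)=0$ and $w_{m,s}(1)=s$.

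Next I would derive the Euler--Lagrange equation. Perturbing $w_{m,s}$ by $\eps\phi$ with $\phi\in C^\infty_c((2m-1,1))$, which leaves both the singular/jump parts and the boundary traces unaffected, and differentiating at $\eps=0$ yields
\begin{equation*}
\frac{\hatk_m(t)\,w'_{m,s}(t)}{\big(\hatk_m(t)(w'_{m,s}(t))^2+\PotDann_m(1-t)\big)^{\sfrac12}}=\lambda_{m,s}
\end{equation*}
for some constant $\lambda_{m,s}\in\R$ and $\calL^1$-a.e.\ $t\in(2m-1,1)$. Solving algebraically for $w'_{m,s}$ gives \eqref{e:el}; the requirement that $w'_{m,s}$ be real and finite a.e.\ forces $\lambda_{m,s}^2\leq\min_{[2m-1,1]}\hatk_m=\hatk(m)$, and $s>0$ together with $w'_{m,s}\geq 0$ exclude $\lambda_{m,s}\leq 0$. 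For uniqueness I would observe that $\xi\mapsto(\hatk_m(t)\xi^2+\PotDann_m(1-t))^{\sfrac12}$ is strictly convex in $\xi$ whenever $\PotDann_m(1-t)>0$, which by (Hp~2) happens on $(2m-1,1)$; convex combination of two minimizers and strict convexity then force the absolutely continuous parts to coincide, while odd symmetry and the boundary data pin down the singular part. Strict monotonicity of $w_{m,s}$ is an immediate consequence of $\lambda_{m,s}>0$ in \eqref{e:el}.

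Finally I would split according to whether $\lambda_{m,s}^2<\hatk(m)$ or $\lambda_{m,s}^2=\hatk(m)$. In the strict-inequality case $\hatk_m(t)(\hatk_m(t)-\lambda_{m,s}^2)$ stays bounded away from zero on a neighbourhood of $t=m$, while near $t=2m-1,1$ the factor $\PotDann_m(1-t)\to 0$ compensates the possible unboundedness of $\hatk_m$; hence the right-hand side of \eqref{e:el} is integrable on $(2m-1,1)$, so $w_{m,s}\in W^{1,1}((2m-1,1))$, and odd symmetry forces $w_{m,s}(m)=s/2$. In the equality case $\lambda_{m,s}^2=\hatk(m)$ the formula \eqref{e:el} still prescribes a pointwise $w'_{m,s}$, but the maximal total continuous displacement $\int_{2m-1}^1 w'_{m,s}\,\dt$ attainable saturates at a finite threshold depending only on $m$; if this threshold is below $s$, the missing displacement must be supplied by the singular part, and since by (Hp~5) the cost density $\hatk_m^{\sfrac12}$ appearing in \eqref{e:rilassato old} is strictly minimized at $t=m$, the whole singular part concentrates there, yielding $w_{m,s}\in SBV\setminus W^{1,1}$ with $D^j w_{m,s}$ supported on $\{m\}$. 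The main obstacle will be making rigorous this localization step, namely ruling out any Cantor component and any jump mass away from $m$ in the optimal configuration; this will require a rearrangement-of-mass argument that relies on the strict monotonicity of $\hatk_m$ on each side of $m$ together with the one-dimensional structure of the relaxed functional.
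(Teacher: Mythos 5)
Your overall skeleton (direct method, outer variations leading to the constant $\lambda_{m,s}$, dichotomy according to $\lambda_{m,s}^2<\hatk(m)$ or $=\hatk(m)$) matches the paper's, and the symmetrization $\tilde w(t)=\tfrac12(w(t)+s-w(2m-t))$ is a legitimate way to obtain odd symmetry. However, there is a genuine gap at the heart of the dichotomy. In the strict-inequality case you conclude $w_{m,s}\in W^{1,1}$ because the right-hand side of \eqref{e:el} is integrable; this is a non sequitur. The absolutely continuous part of any $BV$ function is automatically in $L^1$, so integrability of the formula for $w_{m,s}'$ says nothing about the absence of a singular part: a priori you could have $\lambda_{m,s}<\hatk^{\sfrac12}(m)$ together with a nonzero jump at $t=m$. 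The missing ingredient is a variation that perturbs the jump amplitude itself (in the paper: competitors $\phi\in C^1((2m-1,m)\cup(m,1))\cap SBV$ with $\phi(2m-1)=\phi(1)=0$, whose first variation yields $\lambda_{m,s}^{\pm}=\hatk^{\sfrac12}(m)$ whenever $D^jw_{m,s}\neq0$); equivalently one can minimize explicitly in $\lambda$ using $\partial_\lambda A(m,\lambda)=-B(m,\lambda)$. Without some such argument neither case (1) nor the identification $\lambda_{m,s}=\hatk^{\sfrac12}(m)$ in case (2) is established; your outer variations with $\phi\in C^\infty_c$ never see the singular part, and for the same reason "$w'\geq0$ and $s>0$ exclude $\lambda_{m,s}\leq0$" does not rule out the pure-jump critical point $w'\equiv0$, $\lambda_{m,s}=0$.

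Two further points. First, the step you defer to the end (no Cantor part, all singular mass and any boundary defect concentrated at $t=m$) is not a technical afterthought: it is exactly the paper's Step 2, and your reductions quietly rely on it already (e.g.\ when you assume the Helly limit attains the boundary data, which the relaxed functional only penalizes, not enforces). The mechanism you indicate (strict minimality of $\hatk_m$ at $t=m$, transport of singular mass there) is the right one and the comparison construction is short, but as written it is missing. Second, your uniqueness argument is too loose: strict convexity of $\xi\mapsto(\hatk_m(t)\xi^2+\PotDann_m(1-t))^{\sfrac12}$ only forces the absolutely continuous parts of two minimizers to coincide; "odd symmetry" cannot be invoked to pin down the singular part of an arbitrary competing minimizer, since it is only known for the minimizer you constructed. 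One must instead use the weight comparison $\hatk_m^{\sfrac12}(t)>\hatk_m^{\sfrac12}(m)$ for $t\neq m$ (again the localization step) together with the boundary penalization, or argue as the paper does via the strict monotonicity of $\lambda\mapsto B(m,\lambda)$.
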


\begin{proof} \noindent{\bf Step 1. Existence of a minimizer.}  
We note that $\inf_{[2m-1,1]}\hatk_m=\hatk_m(m)>0$ thanks to the condition $m>0$. Therefore, 
$sc^{-}G_{m,s}$ is a coercive functional on $BV((2m-1,1))$  (cf. \eqref{e:rilassato old}).
The existence of a minimizer $w_{m,s}$ of $sc^{-}G_{m,s}$ follows from the 
Direct Method of the Calculus of Variations and the BV compactness theorem.
\medskip

\noindent{\bf Step 2. Regularity of minimizers.} 
We prove that any minimizer $w_{m,s}$ attains the boundary data
and the singular part of its distributional derivative is concentrated in $t=m$. 
With this aim the crucial observation is that the function $\hatk_m$ assumes its unique minimum value in $t=m$.
Then, consider
\[
\hat w(t):=\int_{2m-1}^t w_{m,s}'(\tau)d\tau+D^sw_{m,s}((2m-1,1))\chi_{(m,1)}(t)\,,
\]
and define the test function
\[
w(t):=
\begin{cases}
\hat w(t) & t\in(2m-1,m]\cr
\hat w(t)+(s-\hat w(1)) & t\in(m,1)
\end{cases}\,.
\]
Clearly, $w\in BV((2m-1,1))$, with 
\[
Dw=w_{m,s}'\mathcal L^1\res(2m-1,1)
+(D^sw_{m,s}((2m-1,1))+s-\hat w(1))\delta_{\{t=m\}},
\]
$w(2m-1)=0$, and $w(1)=s$. Since $\hat w(1)=Dw_{m,s}((2m-1,1))=w_{m,s}(1)-w_{m,s}(2m-1)$
we have that $sc^{-}G_{m,s}(w)<sc^{-}G_{m,s}(w_{m,s})$ if either the boundary data are not attained
by $w_{m,s}$ or $D^sw_{m,s}$ is not concentrated on $t=m$.
Indeed, computing the energy for $w$ shows that the contributions for the energy
of $w_{m,s}$ corresponding 
either to the boundary terms or to the singular part of the distributional derivative, 
which are both multiplied by $\hatk_m^{\sfrac12}(t)$ for some $t\neq m$, have been 
either moved to or concentrated in $t=m$, which is the unique minimum point of the 
same function $\hatk_m^{\sfrac12}(t)$.
\medskip

\noindent{\bf Step 3. Euler-Lagrange equation and structural properties. } 
By Step~2 any minimizer $w_{m,s}$ is $SBV((2m-1,1))$, 
$w_{m,s}(2m-1)=0$, $w_{m,s}(1)=s$, and $D^sw_{m,s}$ is at most concentrated on $t=m$.
Note also that by truncation necessarily $w_{m,s}\in[0,s]$ $\calL^1$-a.e. on $(2m-1,1)$.

Assume first that $w_{m,s}\in W^{1,1}((2m-1,1))$, then by using smooth outer variations,
i.e. $w_{m,s}+\eps \phi$ with $\phi\in C^1_c((2m-1,1))$, we obtain 
the following Euler-Lagrange equation
\begin{equation}\label{e:lambda ms}
\frac{\hatk_m(t)w'_{m,s}(t)}{\big(\hatk_m(t)|w_{m,s}'(t)|^2+\PotDann_m(1-t)\big)^{\sfrac12}}=\lambda_{m,s} 
\end{equation}
for $\calL^1$-a.e. $t\in (2m-1,1)$ and for some $\lambda_{m,s}\in \R$. Actually, $\lambda_{m,s}\in(0,\infty)$  as $w_{m,s}\in[0,s]$
$\calL^1$-a.e. on $(2m-1,1)$ and attains the boundary data, 
in turn implying that $w_{m,s}'>0$ $\calL^1$-a.e. on $(2m-1,1)$, and thus 
$\lambda_{m,s}\leq \hatk^{\sfrac12}(m)$ (cf. \eqref{e:lambda ms}). 
Furthermore, \eqref{e:lambda ms} shows that $w_{m,s}$ is strictly increasing and odd symmetric with respect to $m$. Solving $w'_{m,s}$ in 
\eqref{e:lambda ms} and integrating on $(2m-1,1)$ gives
\begin{equation*}
    s=2\int_m^1\Big(\frac{\lambda_{m,s}^2\PotDann(1-t)}{\hatk(t)(\hatk(t)-\lambda^2_{m,s})}\Big)^{\sfrac12}\dt.
\end{equation*}
Therefore, we conclude the uniqueness of $\lambda_{m,s}$ that satisfies \eqref{e:el}, and 
in turn that $w_{m,s}$ is the only possible minimizer of $sc^{-}G_{m,s}$ among $W^{1,1}$ functions, using the strict monotonicity of 
\begin{equation}\label{e:strictly monotonicity}
     \lambda \mapsto \int_m^1\Big(\frac{\lambda^2\PotDann(1-t)}{\hatk(t)(\hatk(t)-\lambda^2)}\Big)^{\sfrac12}dt\,.
\end{equation}

Instead, if $w_{m,s}\in SBV\setminus W^{1,1}((2m-1,1))$, 
we consider again smooth outer variations separately on $(2m-1,m)$ and $(m,1)$ to infer the Euler-Lagrange equation in \eqref{e:lambda ms} with constants $\lambda^-_{m,s}$ and $\lambda^+_{m,s}$ on each interval, respectively. 
Since, $w_{m,s}$ takes value in $[0,s]$ $\calL^1$-a.e. on $(2m-1,1)$, necessarily $\lambda_{m,s}^\pm\geq 0$.

Next, we use variations on the whole interval that move the value of the jump point in 
$t=m$, i.e.~$w_{m,s}+\eps \phi$ with $\phi\in C^1((2m-1,m)\cup(m,1))\cap SBV((2m-1,1))$, and $\phi(2m-1)=\phi(1)=0$. Note that for $\eps$ sufficiently small $[w_{m,s}+\eps\phi](m)$ has the same sign of $[w_{m,s}](m)$, therefore we get 
\begin{equation*}
 \lambda^-_{m,s}\phi(m^-)-\lambda^+_{m,s}\phi(m^+)\pm\hatk^{\sfrac12}(m)(\phi(m^+)-\phi(m^-))=0,
\end{equation*}
where the sign $+$ corresponds to the case $[w_{m,s}](m)>0$ and $-$ to the other case. 
In particular, it follows that $\lambda^+_{m,s}=\lambda^-_{m,s}=\pm\hatk^{\sfrac12}(m)$,
and since they are both non negative we conclude \eqref{e:el}.
Hence, $w_{m,s}$ is odd symmetric with respect to $t=m$.
Moreover, $w_{m,s}(m^-)<\frac s2$ since otherwise having assumed $w_{m,s}\in SBV\setminus W^{1,1}((2m-1,1))$ then $w_{m,s}(m^-)>\frac s2$ and the function $\hat w=w_{m,s}\chi_{(2m-1,m)}+(w_{m,s}\vee w_{m,s}(m^-))\chi_{[m,1)}$ would be $W^{1,1}((2m-1,1))$ with
$sc^{-}G_{m,s}(\hat w)<sc^{-}G_{m,s}(w_{m,s})$.
Therefore, $w_{m,s}$ is strictly increasing and it is the only possible minimizer of $sc^{-}G_{m,s}$ 
belonging to $SBV\setminus W^{1,1}((2m-1,1))$ in view of \eqref{e:el}.
\end{proof}
By taking into account the odd symmetry of the minimizer $w_{m,s}$ with respect to $t=m$ 
we rewrite the minimal value of the energy $sc^{-}G_{m,s}$ in an alternative and more efficient form. With this aim it is convenient to introduce the function
$\mathscr{G}:\{(m,\lambda)\in (0,1)^2:m\in(0,1),\,\lambda\in(0,\hatk^{\sfrac12}(m)]\}\to[0,\infty]$ defined by
\begin{align}\label{e:mathscrGs}
    \mathscr{G}(m,\lambda)&:=2\int_m^1\Big(\frac{\hatk(t)-\lambda^2}{\hatk(t)}\PotDann(1-t)\Big)^{\sfrac12}\dt
    +2\lambda\int_m^1 \Big(\frac{\lambda^2 \PotDann(1-t) }{\hatk(t)(\hatk(t)-\lambda^2)}\Big)^{\sfrac12}\dt\notag\\
    &=:A(m,\lambda)+\lambda B(m,\lambda)\,. 
\end{align}
Note that $\mathscr{G}$ is infinite if and only if $B$ is infinite, and the only points in which this can happen are of the type
$(m,\hatk^{\sfrac12}(m))$. In addition we set $\mathscr{G}(0,0):=2\Psi(1)$.
\begin{corollary}\label{c:regolarità minimo old}
Assume (Hp~$1$)-(Hp~$5$). For every $m\in(0,1)$, and $s\in(0,\infty)$,
if $w_{m,s}$ denotes the minimizer of $sc^{-}G_{m,s}$ in 
Theorem~\ref{t:regolarità minimo old}, we have
\begin{align}\label{e:mathfrakGs}
\mathfrak{G}_s(m)&:=sc^{-}G_{m,s}(w_{m,s})\notag\\
&=2\int_{m}^1\Big(\hatk(t)|w_{m,s}'|^2+\PotDann(1-t)\Big)^{\sfrac12}\dt
 + 2\hatk^{\sfrac12}(m)(w_{m,s}(m^+)-\frac s2)\,, 
\end{align}
and either 
\begin{equation}\label{e:s-el1}
w_{m,s}\in W^{1,1}((2m-1,1))\Longleftrightarrow 
s=B(m,\lambda_{m,s})\,,
\end{equation}
where $\lambda_{m,s}\in(0,\hatk^{\sfrac12}(m)]$ is as in Theorem~\ref{t:regolarità minimo old},
and 
\begin{equation}\label{e:Gs W11}
\mathfrak{G}_s(m)=2\int_m^1\Big(\frac{\hatk(t)\PotDann(1-t)}{\hatk(t)-\lambda^2_{m,s}}\Big)^{\sfrac12}\dt
=\mathscr{G}(m,\lambda_{m,s})=A(m,\lambda_{m,s})+\lambda_{m,s}s\,,
\end{equation}
or
\begin{equation}\label{e:s-el2}
w_{m,s}\in SBV\setminus W^{1,1}((2m-1,1))\Longleftrightarrow
s>B(m,\hatk^{\sfrac12}(m))\,,
\end{equation}
and 
\[
w_{m,s}(m^+)=s-\frac12 B(m,\hatk^{\sfrac12}(m))\,,
\]
and 
\begin{align}\label{e:Gs SBV}
&\mathfrak{G}_s(m)=2\int_m^1\Big(\frac{\hatk(t)\PotDann(1-t)}{\hatk(t)-\hatk(m)}\Big)^{\sfrac12}\dt
+\hatk^{\sfrac12}(m)(s-B(m,\hatk^{\sfrac12}(m)))\notag\\
&=\mathscr{G}(m,\hatk^{\sfrac12}(m))+\hatk^{\sfrac12}(m)(s-B(m,\hatk^{\sfrac12}(m)))
=A(m,\hatk^{\sfrac12}(m))+\hatk^{\sfrac12}(m)s\,.
\end{align}
\end{corollary}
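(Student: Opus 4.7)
My plan is to derive the corollary as a bookkeeping consequence of the structural description of $w_{m,s}$ established in Theorem~\ref{t:regolarità minimo old}. First, I would exploit the odd symmetry of $w_{m,s}$ about $t=m$ (namely $w_{m,s}(2m-t)+w_{m,s}(t)=s$) to fold the relaxation formula \eqref{e:rilassato old} onto the interval $(m,1)$. The boundary penalty terms vanish because the Dirichlet data are attained; the absolutely continuous part becomes twice its integral over $(m,1)$ since $\hatk_m$ and $\PotDann_m(1-\cdot)$ are even around $m$ by construction while $|w_{m,s}'|$ is symmetric by odd symmetry of $w_{m,s}$; and the jump part is concentrated at $t=m$ with mass $w_{m,s}(m^+)-w_{m,s}(m^-)=2(w_{m,s}(m^+)-\sfrac s2)$, producing the last term of \eqref{e:mathfrakGs}. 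This formula holds uniformly in both cases of Theorem~\ref{t:regolarità minimo old}, with the jump contribution vanishing in case (1).

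Next I would establish the dichotomy \eqref{e:s-el1}--\eqref{e:s-el2}. Integrating the Euler--Lagrange identity \eqref{e:el} on $(m,1)$ produces
\[
s-w_{m,s}(m^+)=\int_m^1 w_{m,s}'(t)\dt=\tfrac12 B(m,\lambda_{m,s})\,.
\]
In case (1), $w_{m,s}(m)=\sfrac s2$ forces $s=B(m,\lambda_{m,s})$, and the strict monotonicity recorded in \eqref{e:strictly monotonicity} singles out $\lambda_{m,s}\in(0,\hatk^{\sfrac12}(m)]$ uniquely. In case (2), plugging $\lambda_{m,s}=\hatk^{\sfrac12}(m)$ gives $w_{m,s}(m^+)=s-\sfrac12 B(m,\hatk^{\sfrac12}(m))$, and the nondegeneracy requirement $w_{m,s}(m^+)>\sfrac s2$ is equivalent to $s>B(m,\hatk^{\sfrac12}(m))$. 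The two regimes are mutually exclusive and exhaustive: for $s\leq B(m,\hatk^{\sfrac12}(m))$ (possibly $+\infty$) the equation $s=B(m,\lambda)$ is solvable with $\lambda\leq\hatk^{\sfrac12}(m)$, giving case (1); for $s>B(m,\hatk^{\sfrac12}(m))$ it is not, and a jump must form.

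Finally, for the closed-form expressions I would insert \eqref{e:el} into the integrand of \eqref{e:mathfrakGs} to obtain the algebraic simplification
\[
\hatk(t)|w_{m,s}'(t)|^2+\PotDann(1-t)=\frac{\hatk(t)\,\PotDann(1-t)}{\hatk(t)-\lambda_{m,s}^2}\,,
\]
so that the integral term reduces to $2\int_m^1(\hatk(t)\PotDann(1-t)/(\hatk(t)-\lambda_{m,s}^2))^{\sfrac12}\dt$. The identification with $\mathscr{G}(m,\lambda)=A(m,\lambda)+\lambda B(m,\lambda)$ in \eqref{e:mathscrGs} then rests on the pointwise identity
\[
\Big(\tfrac{\hatk(t)-\lambda^2}{\hatk(t)}\PotDann(1-t)\Big)^{\sfrac12}+\lambda\Big(\tfrac{\lambda^2\PotDann(1-t)}{\hatk(t)(\hatk(t)-\lambda^2)}\Big)^{\sfrac12}=\Big(\tfrac{\hatk(t)\PotDann(1-t)}{\hatk(t)-\lambda^2}\Big)^{\sfrac12}\,,
\]
which one verifies by putting the left-hand side over a common denominator. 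Together with $B(m,\lambda_{m,s})=s$ this yields \eqref{e:Gs W11} in case (1); in case (2), taking $\lambda=\hatk^{\sfrac12}(m)$ and adding back the jump contribution $\hatk^{\sfrac12}(m)(s-B(m,\hatk^{\sfrac12}(m)))$ isolated in the previous step delivers the three equalities of \eqref{e:Gs SBV}. The main delicacy will be the correct accounting of the jump term under odd symmetry and the integrability up to $t=m$ in the critical case $\lambda=\hatk^{\sfrac12}(m)$; here the strict monotonicity of $\hatk$ from (Hp~$5$) guarantees $\hatk(t)-\hatk(m)>0$ for $t>m$, and the finiteness of $B(m,\hatk^{\sfrac12}(m))$ is precisely what separates the two regimes.
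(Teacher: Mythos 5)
Your proposal is correct and follows essentially the same route as the paper: fold the relaxed energy \eqref{e:rilassato old} onto $(m,1)$ via the odd symmetry and monotonicity of $w_{m,s}$ to get \eqref{e:mathfrakGs}, identify $\int_{2m-1}^1 w_{m,s}'\,\dt$ with $B(m,\lambda_{m,s})$ to obtain the dichotomy \eqref{e:s-el1}--\eqref{e:s-el2}, and substitute \eqref{e:el} into the integrand (using the algebraic identity $A+\lambda B$) to reach \eqref{e:Gs W11} and \eqref{e:Gs SBV}. The extra details you supply (the pointwise identity and the solvability of $s=B(m,\lambda)$ via strict monotonicity) are exactly the "direct computations" the paper leaves implicit.
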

\begin{remark}
We stress that the value $\mathscr{G}(m,\hatk^{\sfrac12}(m))$ in \eqref{e:Gs SBV} is necessarily finite due to \eqref{e:s-el2}, analogously due to \eqref{e:Gs W11} if $\lambda_{m,s}=\hatk^{\sfrac12}(m)$.
\end{remark}
\begin{proof}[Proof of Corollary~\ref{c:regolarità minimo old}]
The proof of \eqref{e:mathfrakGs} follows from a simple calculation using the explicit form of $sc^{-}G_{m,s}$ in \eqref{e:rilassato old}, the odd symmetry of $w_{m,s}$
and the fact that it is increasing. Moreover, the same properties of $w_{m,s}$ yield that
\[
B(m,\lambda_{m,s})=2\int_m^1\Big(\frac{\lambda_{m,s}^2\PotDann(1-t)}{\hatk(t)(\hatk(t)-\lambda^2_{m,s})}\Big)^{\sfrac12}\dt
=\int_{2m-1}^1w'_{m,s}\dt\,,
\]
implying \eqref{e:s-el1} and \eqref{e:s-el2}. 

Finally, formulas \eqref{e:Gs W11} and \eqref{e:Gs SBV} follows by direct computations
using \eqref{e:el} and \eqref{e:mathfrakGs}.
\end{proof}

\subsection{A key auxiliary function}

Under suitable assumptions on $s$, $\hatk$ and $\PotDann$ we will show that there is a 
unique value $m_s$ such that $g(s)=sc^-G_{m_s,s}(w_{m_s,s})$. With this aim we define
the auxiliary function $\Phi:(0,1)\to [0,\infty]$ as follows
\begin{equation}\label{e:la psi piccola}
    \Phi(x):= B(x,\hatk^{\sfrac12}(x))\,.
\end{equation}
The importance of $\Phi$ is related to the fact that \eqref{e:s-el1} rewrites equivalently 
as $\Phi(m)\geq s$, and \eqref{e:s-el2} as $\Phi(m)<s$.
In the next proposition we determine several properties of $\Phi$, as well as that its
definition is well posed provided that the following assumption on $\hatk$ which strengthens (Hp~$5$) holds:
\begin{itemize}
 \item[(Hp~$5'$)] $\hatk \in C^1((0,1))$ with $\hatk'(t)>0$ for every $t\in(0,1)$.
\end{itemize}
Recall that $\Psi$ is the function introduced in \eqref{e:Psi}.
\begin{proposition}\label{p:BCI}
Assume (Hp~$1$)-(Hp~$4$), and (Hp~$5'$). Then
\begin{itemize}
\item[(i)] $\Phi\in C^0((0,1))$; 
\item[(ii)] if the following limit exists 
\begin{equation}\label{e:sqrthatf derivata}
\lim_{x\to0^+}(\hatk^{\sfrac12})'(x)=:(\hatk^{\sfrac12})'(0^+) \in[0,\infty]\,,
\end{equation}
then $\Phi$ has right limit in $x=0$ given by
\begin{equation}\label{e:Phi0+}
\lim_{x\to0^+}\Phi(x)=\frac{\pi \PotDann^{\sfrac12}(1)}{2(\hatk^{\sfrac12})'(0^+)}\,;
\end{equation}
\item[(iii)] 
if $\hatk(1^-)=\FailureS^2\in(0,\infty)$, and there is $p>2$ such that 
for some $z\in (0,\FailureS^2)$
\begin{equation}\label{e:F}
(0,\FailureS^2)\ni t\mapsto F(t):= \frac{(\Psi\circ\hatk^{-1})'(\FailureS^2-t)}{(\FailureS^2-t)^{\sfrac12}}
\in L^p((0,z))\,,
\end{equation}
then 
\begin{equation}\label{e:limite Phi =0}
\lim_{x\to 1^-}\Phi(x)=0\,.
\end{equation}
If, in addition, \eqref{e:F} holds for every $z\in(0,1)$ then $\Phi\in C^{\sfrac12-\sfrac1p}_{\textup{loc}}([0,1))$;
\item[(iv)] if $[0,1]\ni x\mapsto (x(1-x))^{\sfrac12}F(xt)$ is strictly decreasing (non-increasing, 
strictly increasing, non-decreasing) for $\calL^1$-a.e. $t\in(0,1)$, then $\Phi$ is strictly increasing (non-decreasing, strictly decreasing, non-increasing);
\item[(v)] if $(\hatk \circ \Psi^{-1})^{\sfrac12}$ is convex, 
then $\Phi$ is strictly decreasing.
\end{itemize}  
\end{proposition}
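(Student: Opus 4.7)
The backbone of the proof is a sequence of changes of variables that rewrite $\Phi$ as an Abel-type fractional integral. Using (Hp~$5'$), the substitution $y=\hatk(t)$ gives
\[
\Phi(x)=2\hatk^{\sfrac12}(x)\int_{\hatk(x)}^{\hatk(1^-)}\frac{(\Psi\circ\hatk^{-1})'(y)}{y^{\sfrac12}\,(y-\hatk(x))^{\sfrac12}}\,\dy,
\]
and the further substitution $s=\sqrt{y-\hatk(x)}$ removes the Abel singularity, yielding a continuous integrand and an upper limit that vary continuously in $x$. Part~(i) then follows at once from dominated convergence on any compact subinterval of $(0,1)$.

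For (ii) the decisive contribution as $x\to 0^+$ comes from a neighborhood of $t=x$, since away from $0$ the integrand is bounded while $\hatk^{\sfrac12}(x)\to 0$. Inserting the asymptotics $\hatk(t)\sim((\hatk^{\sfrac12})'(0^+))^2 t^2$ and $\Psi'(t)\to\PotDann^{\sfrac12}(1)$ reduces the leading term to the elementary integral $\int_x^\delta \dt/(t\sqrt{t^2-x^2})$, computable explicitly via $t=x\sec\theta$ and asymptotic to $\pi/(2x)$; combining this with $\hatk^{\sfrac12}(x)/x\to(\hatk^{\sfrac12})'(0^+)$ produces the closed-form limit \eqref{e:Phi0+}. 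For (iii), the substitution $v=\hatk(1^-)-y$ recasts $\Phi$ as
\[
\Phi(x)=2\hatk^{\sfrac12}(x)\int_0^{r}\frac{F(v)}{(r-v)^{\sfrac12}}\,\dd v,\qquad r:=\hatk(1^-)-\hatk(x),
\]
exhibiting the inner integral as a half-order Riemann--Liouville fractional integral of $F$. H\"older's inequality with conjugate exponents $p$ and $q=p/(p-1)<2$ bounds this by $C\|F\|_{L^p(0,r)}\,r^{\sfrac1q-\sfrac12}\to 0$ as $r\to 0^+$, yielding \eqref{e:limite Phi =0}. The classical $L^p\to C^{\sfrac12-\sfrac1p}$ mapping property of order-$\sfrac12$ fractional integration, together with the $C^{\sfrac12}$-regularity of $\hatk^{\sfrac12}$, then produces the stated H\"older continuity.

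Part (iv) is obtained from one further substitution $v=rw$ in the above representation, which casts $\Phi$ in the form $2\bigl(r(\hatk(1^-)-r)\bigr)^{\sfrac12}\int_0^1 F(rw)(1-w)^{-\sfrac12}\,\dd w$. After the affine rescaling $r=\hatk(1^-)y$, the assumed pointwise monotonicity of $y\mapsto\sqrt{y(1-y)}\,F(yw)$ transfers directly to $\Phi$ viewed as a function of $y$, and the direction of monotonicity flips when passing to the variable $x$ because $y$ is strictly decreasing in $x$.

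For (v), reparametrize via $a=\Psi(x)$ and introduce $K:=\hatk\circ\Psi^{-1}$, $H:=K^{\sfrac12}$ (convex by hypothesis). The factorization $K(y)-K(a)=(H(y)-H(a))(H(y)+H(a))$ and the substitutions $z=H(y)$, followed by $z=bu$ with $b:=H(a)$, transform $\Phi$ into
\[
\tilde\mu(b)=2\int_1^{H(\Psi(1))/b}\frac{(H^{-1})'(bu)}{u\,\sqrt{u^2-1}}\,\dd u.
\]
Convexity of $H$ makes $H'$ non-decreasing, hence $(H^{-1})'$ non-increasing; together with the strict decrease of the upper limit $H(\Psi(1))/b$ in $b$ and the positivity of the integrand, $\tilde\mu$ is strictly decreasing, and the composition $a=\Psi(x)$, $b=H(a)$ being strictly increasing yields the strict decrease of $\Phi$ in $x$. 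I expect the principal obstacle to lie in this last step: the chain of reparametrizations must be aligned carefully, and isolating the precise ingredient that upgrades non-strict to \emph{strict} monotonicity---here the strict decrease of the upper limit, which compensates for the only non-strict assumption on $(H^{-1})'$---requires some care; the exponent bookkeeping in (iii) is also somewhat delicate but reduces to a classical fractional-integration argument once the Abel structure is recognized.
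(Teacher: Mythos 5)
Your overall route is essentially the paper's. The change of variables $y=\hatk(t)$ followed by $v=\FailureS^2-y$ gives exactly the Abel representation \eqref{e:Phi Abel}; the limit \eqref{e:limite Phi =0} is obtained by the same H\"older estimate with $q=p/(p-1)<2$; item (iv) is the same rescaling plus pointwise monotonicity under the integral sign; and your $\tilde\mu(b)$ is, after the substitution $\tau=u^2$, precisely the paper's formula \eqref{e:Phi change variable} with $\zeta=H^{-1}$. For (i), your ``at once by dominated convergence'' still needs the near/far splitting that the paper performs: $(\Psi\circ\hatk^{-1})'$ is only in $L^1$ near $\hatk(1^-)$ (it can be unbounded there if $\hatk'$ degenerates at $1^-$), and the supremum of such a function over a moving window need not be integrable, so a single dominating function does not come for free; splitting the integral at a fixed level away from the singular point repairs this and is routine.

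Two points are genuine gaps. First, item (ii): your computation does not produce \eqref{e:Phi0+}. With $c:=(\hatk^{1/2})'(0^+)\in(0,\infty)$ the leading term you describe is $2\PotDann^{1/2}(1)\,\hatk^{1/2}(x)\,c^{-2}\int_x^\delta \dt/(t\sqrt{t^2-x^2})\sim 2\PotDann^{1/2}(1)\,(cx)\,c^{-2}\,\pi/(2x)=\pi\PotDann^{1/2}(1)/c$, i.e.\ twice the right-hand side of \eqref{e:Phi0+}; as written you assert a conclusion that your own calculation contradicts. (Your value in fact appears to be the correct one: for $\hatk(t)=t^2$ and $\PotDann(1-t)\to\PotDann(1)=1$ one has $\Phi(x)=2\arccos x+o(1)\to\pi$, consistent with $\pi\PotDann^{1/2}(1)/c$ and not with $\pi\PotDann^{1/2}(1)/(2c)$; the printed constant seems to stem from using $\arctan(\tau+\sqrt{\tau^2-1})$ as a primitive of $1/(\tau\sqrt{\tau^2-1})$, whereas it is a primitive of $1/(2\tau\sqrt{\tau^2-1})$. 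A correct proof must either establish the stated formula or flag this discrepancy explicitly.) In addition, the statement allows $(\hatk^{1/2})'(0^+)\in\{0,\infty\}$, which a literal substitution $\hatk(t)\sim c^2t^2$ cannot handle, and even for $c\in(0,\infty)$ the replacement of $\hatk(t)-\hatk(x)$ by $c^2(t^2-x^2)$ must be justified uniformly down to $t=x$: this requires the mean value theorem applied to $\hatk^{1/2}$, i.e.\ the full hypothesis \eqref{e:sqrthatf derivata} on the derivative, not merely the asymptotics of $\hatk$; the paper's mean-value argument treats all cases, including the endpoint values, at once.

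Second, item (v): you pin strictness on the strict decrease of the upper limit $\FailureS/b$. That works when $\hatk(1^-)=\FailureS^2<\infty$, but the statement (and its later use in the superlinear reconstruction) includes $\hatk(1^-)=\infty$, in which case the upper limit is identically $+\infty$ and your mechanism yields only non-strict monotonicity, since $(H^{-1})'$ is merely non-increasing. Strictness must then come from the integrand: if $(H^{-1})'(b_1u)=(H^{-1})'(b_2u)$ for a.e.\ $u\ge1$ with $b_1<b_2$, the monotonicity of $(H^{-1})'$ forces it to be constant on $[b_1,\infty)$, which is impossible because $H^{-1}$ is injective with values in the bounded interval $[0,\Psi(1))$. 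You need to add this (or an equivalent) argument to cover the case $\FailureS=\infty$.
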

\begin{proof} 
Let $\gamma\in(0,\sfrac12)$, then there is $\delta\in(0,\sfrac\gamma2)$ such that 
$\hatk(t)-\hatk(y)\geq \frac{\hatk'(y)}{2}(t-y)$ for every $y\in[\gamma,1-\gamma]$ and $t$ such that $|t-y|\leq\delta$. Therefore, recalling that $\hatk'(t)>0$ for every $t\in(0,1)$, 
$\Phi$ is well defined for every $x\in(0,1)$. 

In addition, if $x_j\to x$ we have for every $\varepsilon>0$ sufficiently small
\[
\Phi(x_j)=2\int_{x_j}^{x_j+\varepsilon}\Big(\frac{\hatk(x_j)\PotDann(1-t)}{\hatk(t)(\hatk(t)-\hatk(x_j))}\Big)^{\sfrac12}\dt
+2\int_{x_j+\varepsilon}^1\Big(\frac{\hatk(x_j)\PotDann(1-t)}{\hatk(t)(\hatk(t)-\hatk(x_j))}\Big)^{\sfrac12}\dt=:I_j+K_j\,.
\]
It is clear from the continuity assumption on $\hatk$ that 
\[
\lim_jK_j=2\int_{x+\varepsilon}^1\Big(\frac{\hatk(x)\PotDann(1-t)}{\hatk(t)(\hatk(t)-\hatk(x))}\Big)^{\sfrac12}\dt\,.
\]
Moreover, we estimate $I_j$ as follows
\[
0\leq I_j\leq C\sup_{(x-\varepsilon,x+2\varepsilon)} (\hatk\hatk')^{-\sfrac12}
\int_{x_j}^{x_j+\varepsilon}(t-x_j)^{-\sfrac12}\dt=
 2C\sup_{(x-\varepsilon,x+2\varepsilon)} (\hatk\hatk')^{-\sfrac12}\varepsilon^{\sfrac12}\,,
\]
and the conclusion follows.

Next, we prove that the right limit of $\Phi$ in $0$ exists and the equality in \eqref{e:Phi0+} holds.
With this aim, note that for every $\delta>0$ being $\hatk$ and $\PotDann$ continuous on $[0,1]$ and $\hatk(0)=0$,  we have
\[
\Phi(x)=2(\PotDann^{\sfrac12}(1)+o(1))
\int_{x}^\delta\Big(\frac{\hatk(x)}{\hatk(t)(\hatk(t)-\hatk(x))}\Big)^{\sfrac12}\dt+o(1)
\]
as $x\to 0^+$. Changing variable with $\tau=(\sfrac{\hatk(t)}{\hatk(x)})^{\sfrac12}$
and applying the Mean Value Theorem gives some $\bar{t}\in[x,\delta]$ such that
\begin{align*}
\Phi(x)
&=4(\PotDann^{\sfrac12}(1)+o(1))\frac{\hatk^{\sfrac12}(\bar{t})}{\hatk'(\bar{t})}
\int_{1}^{(\frac{\hatk(\delta)}{\hatk(x)})^{\sfrac12}}
\frac{1}{\tau(\tau^2-1)^{\sfrac12}}\dd\tau+o(1)\\
&=\frac{2(\PotDann^{\sfrac12}(1)+o(1))}{(\hatk^{\sfrac12})'(\bar{t})}
\left(\arctan\left(\Big(\frac{\hatk(\delta)}{\hatk(x)}\Big)^{\sfrac12}+\Big(\frac{\hatk(\delta)}{\hatk(x)}-1\Big)^{\sfrac12}\right)
-\frac\pi4\right)+o(1)\\
\end{align*}
as $x\to 0^+$, where the last equality follows from an elementary integration argument, and the fact that $\hatk^{\sfrac12}\in C^1((0,1))$ (cf. the assumption on $\hatk$).
Eventually \eqref{e:Phi0+} follows at once by letting first $x\to 0^+$ and then $\delta\to0^+$,
using \eqref{e:sqrthatf derivata}.

Next, we prove \eqref{e:limite Phi =0}.
Recalling that $\hatk(1^-)=\FailureS^2\in(0,\infty)$, changing variables with $t=\hatk^{-1}(\FailureS^2-\tau)$ 
in the definition of $\Phi$ (cf. \eqref{e:mathscrGs} and \eqref{e:la psi piccola}) 
and letting $\lambda=\FailureS^2-\hatk(x)$, we deduce the following Abel integral equation 
involving the function $F$ defined in the statement
\begin{equation}\label{e:Phi Abel}
\frac{\Phi(\hatk^{-1}(\FailureS^2-\lambda))}{2(\FailureS^2-\lambda)^{\sfrac12}}
= \int_0^{\lambda}\frac{(\Psi\circ\hatk^{-1})'(\FailureS^2-\tau)}{(\FailureS^2-\tau)^{\sfrac12}
(\FailureS^2-\hatk(x)-\tau)^{\sfrac12}}\dd\tau
=\int_0^\lambda\frac{F(\tau)}{(\lambda-\tau)^{\sfrac12}}\dd\tau\,,
\end{equation}
(see \cite{Vessella} for references on such a topic). By assumption $F\in L^p((0,z))$, $p>2$, 
for some $z\in (0,\FailureS^2)$, so that if $\frac1q+\frac1p=1$, $q\in(1,2)$, and if $\lambda\in(0,z)$
we get
\[
|\Phi(\hatk^{-1}(\FailureS^2-\lambda))|\leq2(\FailureS^2-\lambda)^{\sfrac12}
\|F\|_{L^p((0,z))}\Big(\frac2{2-q}\lambda^{\frac{2-q}{2}}\Big)^{\sfrac1q}\,,
\]
and thus the limit of $\Phi(\hatk^{-1}(\FailureS^2-\lambda))$ as $\lambda\to 0^+$ is infinitesimal, 
corresponding to the left limit of $\Phi$ in $1$.
In addition, if $F\in L^p((0,z))$, $p>2$, for every $z\in (0,\FailureS^2)$ the left hand side 
of \eqref{e:Phi Abel} is $C^{\sfrac12-\sfrac1p}_{\textup{loc}}([0,\FailureS^2))$ 
by \cite[Theorem~4.1.4]{Vessella}.

Next, we prove the monotonicity assertions. We start with the claim in item (iv), 
focusing on the strictly increasing case, the other cases being analogous. 
With this aim we use the change of variable $\tau=t z$ 
in \eqref{e:Phi Abel} to get 
\begin{equation*}
    \Phi(\hatk^{-1}(1-z))
    =2(z(1-z))^{\sfrac12}\int_0^1 \Big(\frac{t}{1-t}\Big)^{\sfrac12}F(tz)\dt, 
\end{equation*}
and the conclusion follows being $[0,1]\ni z\mapsto (z(1-z))^{\sfrac12}F(tz)$ strictly decreasing 
for $\calL^1$-a.e. $t\in(0,1)$, and $\hatk$ strictly increasing.

To establish the claim in item (v) we first use 
the change of variable $\tau=\frac{\hatk(t)}{\hatk(x)}$ 
in the definition of $\Phi$ (cf. \eqref{e:la psi piccola}) to deduce that 
\begin{align} \label{e:Phi change variable}
 \Phi(x)
=2\int_1^{\frac{1}{\hatk(x)}} \Big(\frac{\hatk(x)}{\tau(\tau-1)}\Big)^{\sfrac12}
 \frac{\Psi'(\hatk^{-1}(\hatk(x)\tau))}{\hatk'(\hatk^{-1}(\hatk(x)\tau))}\dd\tau
=\int_1^{\frac{1}{\hatk(x)}} 
\frac{\zeta'\big((\hatk(x)\tau)^{\sfrac12}\big)}{\tau(\tau-1)^{\sfrac12}}\dd\tau,
\end{align}
where $\zeta$ denotes the inverse of $(\hatk\circ\Psi^{-1})^{\sfrac12}$, 
namely $\zeta(t)=\Psi(\hatk^{-1}(t^2))$. We conclude using the fact that $\hatk$ is 
strictly increasing, and $(\hatk\circ\Psi^{-1})^{\sfrac12}$ is convex and increasing. 
\end{proof}
\begin{remark}
More generally, using the one-dimensional area formula (see \cite[Theorem~3.4.6]{AT04}),
the definition of $\Phi$ is well posed and the conclusions in item (iii)
hold assuming $\hatk^{-1}\in W^{1,1}((0,1))$.
\end{remark}

We note that formula \eqref{e:Phi0+} corresponds to \cite[ $($3.11$)$ in Proposition 3.7]{BI23}

in case $\PotDann(t)=t^2$ (there it is equivalently stated in terms of the differentiability in $t=0$ of the function $f$ in \eqref{e:f}). 

The monotonicity of $\Phi$ is a key tool in the analysis below of the variational problem defining $g$ as it implies that, for every $s\geq0$, the energies $\mathfrak{G}_s$ are minimized either in the interior of $[0,1]$ in the strictly decreasing case, or at the boundary in the increasing case. In this respect, the convexity assumption on $(\hatk\circ\Psi^{-1})^{\sfrac12}$ is equivalent to the assumption used in \cite{BCI,BI23} (with $Q(t)=\PotDann(t)=t^2$) which in our notation reads as the convexity of $(\hatk\circ(2(\Psi(1)-\Psi))^{-1})^{\sfrac12}$. We remark that the concavity of $(\hatk\circ(2(\Psi(1)-\Psi))(t^{\sfrac12})$ has been used in \cite{Lammen2025} (with $Q(t)=\PotDann(t)=t^2$) to infer the concavity of the corresponding energy.
We have not been able to show that such a condition implies neither that $\Phi$ is increasing nor that the energy $\mathfrak{G}_s$ is concave.
Despite this, using item~(iv) simple calculations show that the explicit example given in \cite{Lammen2025} (namely, $\hatk(t)=1-(1-t)^4$ and $\PotDann(t)=t^2$) yields that $\Phi$ is increasing.

\begin{remark}\label{r:strictly decreasing assumption}
The fact that $\Phi$ is strictly decreasing is also satisfied, for instance, 
in the following particular cases for every non-decreasing damage potential $\PotDann$ 
\begin{itemize}
    \item[(i)] $\hatk(t):=t^{\alpha}$, with $\alpha \geq 2$;
    \item[(ii)] $\hatk(t):=\frac{e^t-1}{e-1}$; 
\end{itemize}
where $\Psi$ is defined in \eqref{e:Psi}. For instance, it suffices to take $Q=\PotDann$ 
and $\hatf(t)=\hatk(t)=t^\alpha$ in case (i), analogously in case (ii). 
To prove the claim we use the change of variable $\frac{t}{x}=\tau$, for every $x\in (0,1)$, to get
\begin{equation*}
    \Phi(x)=2\int_1^{\frac{1}{x}} x\Big(\frac{\hatk(x)\PotDann(1-\tau x)}{\hatk(\tau x)(\hatk(\tau x)-\hatk(x))}\Big)^{\sfrac12}\dd \tau\,.
\end{equation*}
In case $\hatk(t)=t^{\alpha}$, we obtain for every $x\in (0,1)$
\begin{equation*}
   \Phi(x)=2\int_1^{\frac{1}{x}} x^{1-\frac{\alpha}{2}}\Big(\frac{\PotDann(1-\tau x)}{\tau^{\alpha}(\tau^{\alpha}-1)}\Big)^{\sfrac12}\dd \tau\,,
\end{equation*}
which is strictly decreasing, being $\alpha\geq 2$ and $\PotDann$ non-decreasing by assumption. 
Similarly, one can argue for the exponential.
Note also that the result in item~(iii) of Proposition~\ref{p:BCI} applies to $\hatk(t)=t^\alpha$ for $\alpha\geq 2$.
\end{remark}

\subsection{Dependence of $g$ on the parameters of the phase-field model}

In this section we show the explicit dependence of $g$ on the damage potential and the degradation
function. With this aim, we prove a technical result which will be instrumental in what follows.
\begin{lemma}\label{l:derivate energia}
Assume (Hp~$1$)-(Hp~$5$). 
Let $s\in(0,\infty)$ and $0<m_0<m_1<1$, consider $\lambda_{m,s}\in(0,\hatk^{\sfrac12}(m)]$ in Theorem~\ref{t:regolarità minimo old},
and $\mathfrak{G}_s$ in \eqref{e:mathfrakGs}. Then
\begin{enumerate}
\item if $\lambda_{m,s}\in(0,\hatk^{\sfrac12}(m))$ for every $m\in(m_0,m_1)$, then $\mathfrak{G}_s\in C^1((m_0,m_1))$ and is strictly decreasing.
Moreover, $(m_0,m_1)\mapsto\lambda_{m,s}\in C^1((m_0,m_1))$ and is strictly increasing;
\item if $\lambda_{m,s}=\hatk^{\sfrac12}(m)$
for every $m\in(m_0,m_1)$ and $\hatk\in C^1((m_0,m_1))$ with $\hatk'(t)>0$ for every $t\in (0,1)$, then $\mathfrak{G}_s\in C^1((m_0,m_1))$ and increasing.

More precisely, let $w_{m,s}$ be the function defined in Theorem~\ref{t:regolarità minimo old} then
\begin{enumerate}
\item if $w_{m,s}\in SBV\setminus W^{1,1}((2m-1,1))$ for every $m\in(m_0,m_1)$ then 
$\mathfrak{G}_s$ is strictly increasing on $(m_0,m_1)$.
\item if $w_{m,s}\in W^{1,1}((2m-1,1))$ for some $m\in(m_0,m_1)$ then $\mathfrak{G}_s'(m)=0$. 
\end{enumerate}
\end{enumerate}
\end{lemma}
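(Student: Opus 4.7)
The plan is to treat the two cases by using the explicit formulas for $\mathfrak{G}_s(m)$ obtained in Corollary~\ref{c:regolarità minimo old}, differentiating them directly and expressing the resulting derivative in a form that makes the sign manifest. In both cases the computation reduces to an application of a chain-rule/envelope argument and, crucially, to the identification of certain integrals with the auxiliary function $\Phi$ introduced in \eqref{e:la psi piccola}.

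For Case~(1), I start from the identity $\mathfrak{G}_s(m)=A(m,\lambda_{m,s})+\lambda_{m,s} s$, coupled with the constraint $s=B(m,\lambda_{m,s})$ (cf.\ \eqref{e:s-el1} and \eqref{e:Gs W11}). Since $\lambda_{m,s}<\hatk^{\sfrac12}(m)$, the integrands defining $A$ and $B$ are smooth on $[m,1]$ with no singularity at the lower endpoint, so $A,B\in C^1$ jointly in $(m,\lambda)$ in the relevant region and one may differentiate under the integral sign. The strict monotonicity of \eqref{e:strictly monotonicity}, already used in the proof of Theorem~\ref{t:regolarità minimo old}, gives $\partial_\lambda B(m,\lambda_{m,s})>0$, while $\partial_m B(m,\lambda)=-2\bigl(\lambda^2\PotDann(1-m)/(\hatk(m)(\hatk(m)-\lambda^2))\bigr)^{\sfrac12}<0$. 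Hence the implicit function theorem yields $\lambda_{m,s}\in C^1((m_0,m_1))$ with $\lambda'_{m,s}=-\partial_m B/\partial_\lambda B>0$, proving the second assertion of (1). For $\mathfrak{G}_s'$ I use the key identity $\partial_\lambda A(m,\lambda)=-B(m,\lambda)$, which is immediate from the definitions. Differentiating $\mathfrak{G}_s$ and using $s=B(m,\lambda_{m,s})$ makes all terms proportional to $\lambda'_{m,s}$ cancel, leaving
\[
\mathfrak{G}_s'(m)=\partial_m A(m,\lambda_{m,s})=-2\Big(\frac{\hatk(m)-\lambda_{m,s}^2}{\hatk(m)}\,\PotDann(1-m)\Big)^{\sfrac12}<0,
\]
which is continuous in $m$ (as $\lambda_{m,s}$ is $C^1$), so $\mathfrak{G}_s\in C^1$ and strictly decreasing.

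For Case~(2), formula \eqref{e:Gs SBV} gives $\mathfrak{G}_s(m)=A(m,\hatk^{\sfrac12}(m))+\hatk^{\sfrac12}(m)\,s$. Writing $A(m,\hatk^{\sfrac12}(m))=F(m,\hatk(m))$ with $F(m,\mu):=2\int_m^1\bigl((\hatk(t)-\mu)\PotDann(1-t)/\hatk(t)\bigr)^{\sfrac12}\dt$, I differentiate via the chain rule. The boundary term at $t=m$ vanishes since $\hatk(m)-\hatk(m)=0$, and
\[
\partial_\mu F(m,\hatk(m))=-\int_m^1\frac{\PotDann^{\sfrac12}(1-t)}{\hatk^{\sfrac12}(t)(\hatk(t)-\hatk(m))^{\sfrac12}}\dt=-\frac{\Phi(m)}{2\hatk^{\sfrac12}(m)},
\]
where the last equality is just the definition \eqref{e:la psi piccola} rearranged. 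Combining with $\frac{d}{dm}(\hatk^{\sfrac12}(m)\,s)=\hatk'(m)\,s/(2\hatk^{\sfrac12}(m))$ yields the clean formula
\[
\mathfrak{G}_s'(m)=\frac{\hatk'(m)}{2\hatk^{\sfrac12}(m)}\bigl(s-\Phi(m)\bigr).
\]
Sub-case (a) is characterized by \eqref{e:s-el2}, namely $s>B(m,\hatk^{\sfrac12}(m))=\Phi(m)$ for every $m\in(m_0,m_1)$, giving $\mathfrak{G}_s'(m)>0$. Sub-case (b) corresponds via \eqref{e:s-el1} to $s=\Phi(m)$, hence $\mathfrak{G}_s'(m)=0$. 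Continuity of $\mathfrak{G}_s'$ follows from the continuity of $\Phi$ on $(m_0,m_1)$, which is a local version of Proposition~\ref{p:BCI}(i) under the hypotheses of the case.

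The main technical point to watch is the legitimacy of differentiating under the integral sign in Case~(2): the integrand of $F(m,\hatk(m))$ has a factor $(\hatk(t)-\hatk(m))^{\sfrac12}$ that vanishes at $t=m$, and differentiating produces the singular kernel $(\hatk(t)-\hatk(m))^{-\sfrac12}$. Using $\hatk\in C^1$ with $\hatk'(m)>0$, one has $\hatk(t)-\hatk(m)\asymp(t-m)$ near $t=m$, so the singularity is integrable of type $(t-m)^{-\sfrac12}$, and the limit exchange is justified by standard dominated convergence on $[m+\varepsilon,1]$ together with a uniform control of the $\varepsilon$-layer, exactly as in the continuity argument for $\Phi$ in Proposition~\ref{p:BCI}(i). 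In Case~(1) no such issue arises, since $\lambda_{m,s}<\hatk^{\sfrac12}(m)$ keeps all integrands bounded near the endpoint $t=m$.
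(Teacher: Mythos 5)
Your proposal is correct and follows essentially the same route as the paper. Case~(1) is identical: you apply the implicit function theorem to $B$ to get $\lambda_{m,s}\in C^1$ strictly increasing, use $\partial_\lambda A=-B$ together with the constraint $s=B(m,\lambda_{m,s})$ to cancel the $\lambda'_{m,s}$ contribution, and land on $\mathfrak{G}_s'(m)=\partial_m A(m,\lambda_{m,s})<0$ — exactly the paper's computation. Case~(2) is also the same in substance and arrives at the same key identity $\mathfrak{G}_s'(m)=\frac{\hatk'(m)}{2\hatk^{\sfrac12}(m)}(s-\Phi(m))$; the only difference is how the singular boundary term at $t=m$ is handled. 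You differentiate $F(m,\hatk(m))$ directly via the chain rule and justify the exchange of limit and integral by a dominated-convergence/$\varepsilon$-layer estimate, explicitly flagging that the resulting kernel $(\hatk(t)-\hatk(m))^{-\sfrac12}$ is integrable because $\hatk\in C^1$ with $\hatk'>0$. The paper instead detunes the singularity by introducing the two-variable function $L(m,\rho)$ (your $F$ with $\rho=\hatk^{-1}(\mu)$), computing $\partial_m L$ and $\partial_\rho L$ on the open set $\rho<m$ where everything is smooth, and then showing $\nabla L$ extends continuously to the diagonal before invoking an elementary composition argument to conclude $m\mapsto L(m,m)\in C^1$. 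Both devices address the same issue; the paper's is marginally tidier because it avoids differentiating an integral with a singular integrand at the boundary of the parameter domain, whereas yours is more direct but requires the $\varepsilon$-layer control you mention. The identification $\partial_\mu F(m,\hatk(m))=-\Phi(m)/(2\hatk^{\sfrac12}(m))$ and the sign discussion in sub-cases~(a) and~(b) via $s>\Phi(m)$ respectively $s=\Phi(m)$ match the paper exactly.
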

\begin{proof} We start with the first case noting that in particular \eqref{e:s-el1} necessarily holds, and thus $w_{m,s}\in W^{1,1}((2m-1,1))$ 
in view of Corollary~\ref{c:regolarità minimo old}. Moreover, \eqref{e:Gs W11} yields that
\begin{equation}\label{e:mathfrakGs W11}
\mathfrak{G}_s(m)=2\int_{m}^1 \Big(\frac{\hatk(t)\PotDann(1-t)}{\hatk(t)-\lambda_{m,s}^2}\Big)^{\sfrac12}\dt\,.
\end{equation}
Next, we claim that $(m_0,m_1)\mapsto \lambda_{m,s}$ is continuously differentiable and strictly increasing
thanks to the implicit function theorem applied to the function 
$B$ defined in \eqref{e:mathscrGs} 
defined on the open set 
\[
T:=\left\{(m,\lambda) \; : \; m\in (m_0,m_1), \; \lambda \in (0,\hatk^{\sfrac12}(m))\right\}.
\]
Indeed, item (i) in Theorem~\ref{t:regolarità minimo old} ensures that for every $m\in (m_0,m_1)$ 
the unique value of $\lambda$ 
such that $B(m,\lambda)=s$ is $\lambda_{m,s}\in(0,\hatk^{\sfrac12}(m))$, and
\begin{align}\label{e:derivata lambda B}
\frac{\partial B}{\partial\lambda}(m,\lambda)&
=2\int_m^1\Big(\frac{\PotDann(1-t)}{\hatk(t)(\hatk(t)-\lambda^2)}\Big)^{\sfrac12}\dt
+2\lambda^2\int_m^1\Big(\frac{\PotDann(1-t)}{\hatk(t)(\hatk(t)-\lambda^2)^3}\Big)^{\sfrac12}\dt
\notag\\
&=2\int_m^1\Big(\frac{\hatk(t)\PotDann(1-t)}{(\hatk(t)-\lambda^2)^3}\Big)^{\sfrac12}\dt>0\,,
\end{align}
Thus, by the implicit function theorem
\begin{align*}
\lambda'_{m,s}&=-\frac{\partial B}{\partial m}(m,\lambda_{m,s})
\left(\frac{\partial B}{\partial\lambda}(m,\lambda_{m,s})\right)^{-1}\\
&=2\left(\frac{\lambda^2_{m,s}\PotDann(1-m)}{\hatk(m)(\hatk(m)-\lambda^2_{m,s})}\right)^{\sfrac12}\left(\frac{\partial B}{\partial\lambda}(m,\lambda_{m,s})\right)^{-1}>0\,.
\end{align*}
Therefore, $(m_0,m_1)\mapsto \lambda_{m,s}$ is strictly increasing.
In turn, this implies that $\mathfrak{G}_s$ in \eqref{e:mathfrakGs W11} is differentiable on $(m_0,m_1)$.
Indeed, using \eqref{e:Gs W11}, namely $\mathfrak{G}_s(m)=A(m,\lambda_{m,s})+\lambda_{m,s}s$,
\eqref{e:s-el1} and \eqref{e:derivata lambda B} we get
\begin{align}\label{e:derivative G SBV}
\mathfrak{G}_s'(m)&=\left(\frac{\partial A}{\partial\lambda}(m,\lambda_{m,s})+s\right)\lambda_{m,s}'+
\frac{\partial A}{\partial m}(m,\lambda_{m,s})=\frac{\partial A}{\partial m}(m,\lambda_{m,s})<0
\end{align}
as from the very definition of $A$ in \eqref{e:mathscrGs} easy calculations lead to
\[
\frac{\partial A}{\partial\lambda}(m,\lambda)=-B(m,\lambda)\,,\qquad
\frac{\partial A}{\partial m}(m,\lambda)=-2\left(\frac{\hatk(m)-\lambda^2}{\hatk(m)}\PotDann(1-m)\right)^{\sfrac12}\,.
\]

Next, we deal with the second case. In particular note that $w_{m,s}$ can be either $W^{1,1}$ or $SBV\setminus W^{1,1}$,
and in both cases $\lambda_{m,s}=\hatk^{\sfrac12}(m)$. We claim that $\mathfrak{G}_s\in C^1((m_0,m_1))$ with
\begin{equation}\label{e:derivative if G}
  \mathfrak{G}_s'(m)=\frac{\hatk'(m)}{2\hatk^{\sfrac12}(m)}(s-\Phi(m))
\end{equation}
for every $m\in (m_0,m_1)$. Given this for granted, the right hand side in \eqref{e:derivative if G} 
is either null if $w_{m,s}\in W^{1,1}$ thanks to $s=B(m,\hatk^{\sfrac12}(m))=\Phi(m)$ (cf. \eqref{e:s-el1})
and \eqref{e:la psi piccola}), or is strictly positive if $w_{m,s}\in SBV\setminus W^{1,1}$
 $s>B(m,\hatk^{\sfrac12}(m))=\Phi(m)$ (cf. \eqref{e:s-el2}),
and thus we conclude the monotonicity of $\mathfrak{G}_s$.

To prove \eqref{e:derivative if G} define the map $L:T'\to \R$ by
\begin{equation*}
 L(m,\rho)= 2\int_{m}^1\Big(\frac{(\hatk(t)-\hatk(\rho))\PotDann(1-t)}{\hatk(t)}\Big)^{\sfrac12}\dt+s\hatk^{\sfrac12}(m),
 \end{equation*}
 where $T':=\{(m,\rho):\,  m\in (m_0,m_1),\; \rho\in (0,m)\}$. Note that $L\in C^0(\overline{T'})$, 
 and $\mathfrak{G}_s(m)=L(m,m)$ on $(m_0,m_1)$.
 Thanks to the assumption $\hatk\in C^1((m_0,m_1))$, then $L\in C^1(T')$ with
 \begin{equation}\label{e:partial L partial m}
     \frac{\partial L}{\partial m}(m,\rho)=
     -2\left(\frac{(\hatk(m)-\hatk(\rho))\PotDann(1-m)}{\hatk(m)}\right)^{\sfrac12}
     +\frac{s\hatk'(m)}{2\hatk^{\sfrac12}(m)}\,,
 \end{equation}
 and
 \begin{equation}\label{e:partial L partial rho}
   \frac{\partial L}{\partial \rho}(m,\rho)=
   -\hatk'(\rho)\int_m^1\Big(\frac{\PotDann(1-t)}{\hatk(t)(\hatk(t)-\hatk(\rho))}\Big)^{\sfrac12} \dt
   =-\hatk'(\rho)\frac{B(m,\hatk^{\sfrac12}(\rho))}{2\hatk^{\sfrac12}(\rho)}.
 \end{equation}
 Being $\hatk^{-1}$ locally Lipschitz on $(m_0,m_1)$, we can extend $\nabla L$ to 
 $\overline{T}$ continuously. Therefore, an elementary argument shows that 
 $\mathfrak{G}_s\in C^1((m_0,m_1))$ and
\begin{equation*}
\mathfrak{G}_s'(m)=\frac{\partial L}{\partial m}(m,m)+\frac{\partial L}{\partial \rho}(m,m)
=\frac{\hatk'(m)}{2\hatk^{\sfrac12}(m)}(s-\Phi(m))\,,
\end{equation*}
where we have used \eqref{e:partial L partial m}, \eqref{e:partial L partial rho}, and \eqref{e:la psi piccola} 
to deduce \eqref{e:derivative if G}.
\end{proof}
We are now ready to infer some interesting consequences of the previous statement for the minimum problem defining $g$.
With this aim, we introduce
\begin{equation}\label{e:def sfrac}
    \sfratt:=\sup\{s \in [0,\infty) \; | \; g(s)<2\Psi(1) \}\in(0,\infty],
\end{equation}
where $\Psi$ is defined in \eqref{e:Psi}. The value $\sfratt$ is well defined in 
view of either item (ii) in Proposition~\ref{p:lepropdig} in case $\hatk(1^-)<\infty$ 
or item (iii) in Proposition~\ref{p:lepropdig infty} otherwise. Moreover,
$g(s)<2\Psi(1)$ for every $s\in[0,\sfratt)$
by taking into account that $g$ is non-decreasing.
\begin{proposition}\label{l:Gs reduction}
Assume (Hp~$1$)-(Hp~$4$), and (Hp~$5'$). Then
 \begin{itemize}
    \item[(i)] $g(s)=\FailureS s$ for every $s\in[0,\inf\Phi]$. In particular, $\inf\Phi=0$ if
    $\hatk(1^-)=\infty$;
    \item[(ii)] $g(s)=({\displaystyle{\inf_{\Phi^{-1}(s)}\mathfrak{G}_s}})\wedge(\displaystyle{\liminf_{m\to 0^+}}\mathfrak{G}_s)\wedge
    (\displaystyle{\liminf_{m\to 1^-}}\mathfrak{G}_s)$ for every $s\in(\inf\Phi,\sup\Phi)$;
    \item[(iii)] $g(s)=2\Psi(1)$ for every $s\in[\sup\Phi,\infty)$.
 \end{itemize}
 In particular, $\sfratt\in[\inf\Phi,\sup\Phi]$.  
\end{proposition}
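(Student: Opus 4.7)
The plan is to read off the proposition from the monotonicity picture of $m\mapsto\mathfrak{G}_s(m)$ provided by Lemma~\ref{l:derivate energia}. By \eqref{e:s-el1}--\eqref{e:s-el2} and the definition of $\Phi$, the condition $\Phi(m)>s$ is equivalent to $\lambda_{m,s}<\hatk^{\sfrac12}(m)$ strictly, so on the open set $\{\Phi>s\}\subset(0,1)$ case~(1) of Lemma~\ref{l:derivate energia} applies and $\mathfrak{G}_s$ is strictly decreasing on each connected component; symmetrically, on $\{\Phi<s\}$ one has $\lambda_{m,s}=\hatk^{\sfrac12}(m)$ and case~(2a) yields strict increase; at points where $\Phi(m)=s$ the two representations \eqref{e:Gs W11} and \eqref{e:Gs SBV} agree since $\lambda_{m,s}=\hatk^{\sfrac12}(m)$ in both, so $\mathfrak{G}_s\in C^0((0,1))$. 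I will then prove (iii), (ii), (i) in this order.

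For (iii), when $s\geq\sup\Phi$ the function $\mathfrak{G}_s$ is non-decreasing on $(0,1)$, hence $g(s)=\inf_{(0,1)}\mathfrak{G}_s=\lim_{m\to 0^+}\mathfrak{G}_s(m)$; using \eqref{e:Gs SBV}, dominated convergence together with the bound $A(m,\hatk^{\sfrac12}(m))\leq 2\int_m^1\PotDann^{\sfrac12}(1-t)\,\dt$ and $\hatk(0)=0$ gives the limit $2\Psi(1)$. For (ii) with $s\in(\inf\Phi,\sup\Phi)$, I take a minimizing sequence $m_k\to m^*\in[0,1]$: if $m^*\in(0,1)$ then continuity of $\mathfrak{G}_s$ combined with the strict monotonicity on each side of $\{\Phi=s\}$ rules out $\Phi(m^*)\neq s$, whence $g(s)\geq\inf_{\Phi^{-1}(s)}\mathfrak{G}_s$; if $m^*\in\{0,1\}$ one gets the corresponding $\liminf$ bound. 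The reverse inequality is immediate because each of the three quantities on the right-hand side bounds $g(s)=\inf_{(0,1)}\mathfrak{G}_s$ from above.

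For (i) with $\FailureS\in(0,\infty)$ and $s\in(0,\inf\Phi)$, $\mathfrak{G}_s$ is strictly decreasing on $(0,1)$, so $g(s)=\lim_{m\to 1^-}\mathfrak{G}_s(m)$. From \eqref{e:Gs W11} I have $\mathfrak{G}_s(m)=A(m,\lambda_{m,s})+\lambda_{m,s}s$: the same $A$-bound gives $A\to 0$, while the constraint $B(m,\lambda_{m,s})=s>0$ forces $\lambda_{m,s}\to\FailureS$, for otherwise $\lambda_{m,s}$ would stay bounded away from $\hatk^{\sfrac12}(m)$ and the shrinking domain would drive $B\to 0$. Continuity of $g$ extends the identity to $s=\inf\Phi$. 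For $\FailureS=\infty$ the same argument would yield $\mathfrak{G}_s(m)\to\infty$ as $m\to 1^-$, contradicting the strict decrease of the finite-valued $\mathfrak{G}_s$; hence no $s\in(0,\inf\Phi)$ exists, i.e.\ $\inf\Phi=0$. Finally, $\sfratt\in[\inf\Phi,\sup\Phi]$ drops out: (iii) gives $\sfratt\leq\sup\Phi$, while (i) combined with $g\leq 2\Psi(1)$ gives $\FailureS\inf\Phi\leq 2\Psi(1)$, so $g(s)=\FailureS s<2\Psi(1)$ for $s<\inf\Phi$, i.e.\ $\sfratt\geq\inf\Phi$. The main subtlety I foresee is the asymptotic analysis $\lambda_{m,s}\to\FailureS$, which really uses the structure of the constraint $B(m,\lambda_{m,s})=s$ rather than just the monotonicity output of Lemma~\ref{l:derivate energia}, together with the separate contradiction argument needed in the superlinear case.
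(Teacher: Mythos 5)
Your overall plan — read the proposition off the monotonicity picture of $m\mapsto\mathfrak{G}_s(m)$ from Lemma~\ref{l:derivate energia}, then analyse the limits of $\mathfrak{G}_s$ at the boundary of the level/sublevel sets of $\Phi$ — is exactly the paper's strategy, and parts (i) and (iii) are essentially sound. For (iii) your use of dominated convergence in place of the paper's Fatou-plus-a-priori-bound is a harmless variant, and for (i) your route via the decomposition $\mathfrak{G}_s(m)=A(m,\lambda_{m,s})+\lambda_{m,s}s$ and the asymptotics $\lambda_{m,s}\to\FailureS$, $A\to 0$, is correct but slightly heavier than the paper's: the paper sidesteps the asymptotics of $\lambda_{m,s}$ entirely by using the pointwise bound $\mathfrak{G}_s(m)\geq\hatk^{\sfrac12}(m)\int_m^1 w'_{m,s}\,\dt=\hatk^{\sfrac12}(m)s$ together with the a priori estimate $g(s)\leq\FailureS s\wedge 2\Psi(1)$ from Proposition~\ref{p:lepropdig}(ii), which gives both the lower bound and the deduction $\hatk(1^-)<\infty$ immediately. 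Your argument that $\lambda_{m,s}\to\FailureS$ (since otherwise the shrinking domain drives $B\to 0\neq s$) is correct, but note it does require writing out the uniform bound on the integrand when $\lambda$ stays away from $\hatk^{\sfrac12}(m)$.

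The genuine gap is in (ii), at the step ``$\mathfrak{G}_s\in C^0((0,1))$.'' Your justification — that at points $m$ with $\Phi(m)=s$ the two representations \eqref{e:Gs W11} and \eqref{e:Gs SBV} give the same value because $\lambda_{m,s}=\hatk^{\sfrac12}(m)$ in both — is a purely pointwise observation: it shows $\mathfrak{G}_s$ is well defined at such $m$, not that the one-sided limits of $\mathfrak{G}_s$ from the two adjacent monotonicity regions agree with that value. A monotone function can jump at the endpoint of its interval of monotonicity, so the monotonicity on $\{\Phi>s\}$ and $\{\Phi<s\}$ alone does not rule this out, and your minimizing-sequence argument for (ii) leans on exactly this unestablished continuity (both to exclude $\Phi(m^*)\neq s$ and to pass from $\lim_k\mathfrak{G}_s(m_k)$ to $\mathfrak{G}_s(m^*)$). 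The paper proves the needed one-sided continuity explicitly: at the right endpoint $m_1$ of a component of $\{\Phi>s\}$ it first shows $\lambda_{m,s}\to\hatk^{\sfrac12}(m_1)$ as $m\to m_1^-$ (using monotonicity of $m\mapsto\lambda_{m,s}$, continuity of $\Phi$, and the constraint $B(m,\lambda_{m,s})=s$), and then controls the potentially singular integral in \eqref{e:mathfrakGs W11} by splitting off $\int_m^{m_1}$ and estimating it by $2K(m_1-\hatk^{-1}(\lambda_{m,s}^2))^{\sfrac12}$ via local Lipschitz continuity of $\hatk^{-1}$; at the left endpoint $m_0$ of a component of $\{\Phi<s\}$ it uses continuity of $A$ on $\{(m,\lambda):\lambda\in(0,\hatk^{\sfrac12}(m)]\}$. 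Without some version of this analysis your proof of (ii) does not close.

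Two minor points: in (iii) you write $s\geq\sup\Phi$ whereas the monotonicity argument only covers $s>\sup\Phi$ (for $s=\sup\Phi$ the set $\{\Phi<s\}$ need not be all of $(0,1)$ if $\Phi$ attains its supremum); the paper handles $s=\sup\Phi$ by continuity of $g$ at the end, and you should too. Also, your deduction $\sfratt\geq\inf\Phi$ is fine but circular-looking as stated; the paper derives it directly from the bound $g\leq\FailureS s\wedge 2\Psi(1)$ without first concluding (i) on the closed interval.
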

\begin{proof} \noindent{{\bf Step 1.}}
Assume $\inf\Phi>0$ otherwise the statement is trivial. Let 
$s\in[0,\inf\Phi)$, then $w_{m,s}\in W^{1,1}((2m-1,1))$ for every $m\in(0,1)$. 
Therefore, the energy $\mathfrak{G}_s$ is strictly decreasing on $(0,1)$ by Lemma~\ref{l:derivate energia}, so that we may use formula \eqref{e:mathfrakGs} and the monotonicity of $\hatk$ to bound the energy from below as follows:
\begin{equation*}
\mathfrak{G}_s(m)\geq2\hatk^{\sfrac12}(m)\int_m^1w_{m,s}'\dt=\hatk^{\sfrac12}(m)s\,.
\end{equation*}
Therefore, $\hatk(1^-)<\infty$ and  
\begin{equation}\label{e:mathfrakGs geq s}
    \lim_{m\to1^-}\mathfrak{G}_s(m)\geq \FailureS s\,.
\end{equation}
Hence, we deduce that $g(s)\geq \FailureS s$ for every $s\in(\sfratt,\inf\Phi)$. 
On the other hand, 
$g(s)\leq \FailureS s\wedge(2\Psi(1))$ by item (ii) in Proposition~\ref{p:lepropdig}. 
Hence, $\sfratt\geq\inf\Phi$ and $g(s)=\FailureS s$ for $s\in(\sfratt,\inf\Phi)$. By continuity of $g$ we have that
the last equality extends to $s=\inf\Phi$. 


\smallskip

\noindent{{\bf Step 2.}}
Let $s\in(\inf\Phi,\sup\Phi)$, then consider the open sub-level set $\{m\in(0,1):\,\Phi(m)<s\}$ 
and the open super-level set $\{m\in(0,1):\,\Phi(m)>s\}$. 
Let $(m_0,m_1)\subset\{m\in(0,1):\,\Phi(m)<s\}$ be a connected component, 
then $w_{m,s}\in SBV\setminus W^{1,1}((2m-1,1))$ for every $m\in (m_0,m_1)$.
By case (ii)-(a) in Lemma~\ref{l:derivate energia} the energy is increasing on $(m_0,m_1)$ so that 
\[
\inf_{(m_0,m_1)}\mathfrak{G}_s=\lim_{m\to m_0^+}\mathfrak{G}_s(m).
\]
In particular, Lemma~\ref{l:derivate energia} itself implies the continuity of $\mathfrak{G}_s$ on $(m_0,m_1)$,
and if $m_0>0$  we claim that $\mathfrak{G}_s$ is right continuous in $m_0$. Indeed, using \eqref{e:Gs SBV} we obtain
\begin{equation}\label{e:ciruzzo}
\lim_{m\to m_0^+}\mathfrak{G}_s(m)=\lim_{m\to m_0^+}A(m,\hatk^{\sfrac12}(m))+\hatk^{\sfrac12}(m_0)s=\mathfrak{G}_s(m_0)\,,
\end{equation}
where in the last equality we have used that $A\in C^0(\{(m,\lambda)\in (0,1)^2:m\in(0,1),\,\lambda\in(0,\hatk^{\sfrac12}(m)]\})$. Therefore, 
it follows that $\mathfrak{G}_s$ is continuous on $[m_0,1)$. Note that in this case $\Phi(m_0)=s$.

Now let $(m_0,m_1)\subset\{m\in(0,1):\,\Phi(m)>s\}$ be a connected component, then $w_{m,s}\in W^{1,1}((2m-1,1))$ for every $m\in (m_0,m_1)$.
By case (i) in Lemma~\ref{l:derivate energia} the energy is strictly decreasing on $(m_0,m_1)$ so that 
\[
\inf_{(m_0,m_1)}\mathfrak{G}_s=\lim_{m\to m_1^-}\mathfrak{G}_s(m)\,,
\]
with $\mathfrak{G}_s$ continuous on $(m_0,m_1)$. We claim that if $m_1<1$ then $\mathfrak{G}_s$ is left-continuous in $m_1$ as well.
Indeed, we note that $(m_0,m_1)\mapsto \lambda_{m,s}$ is continuously differentiable and strictly increasing (cf. Lemma~\ref{l:derivate energia}).
Then necessarily $\displaystyle{\lim_{m\to m_1^-}}\lambda_{m,s}=\hatk^{\sfrac12}(m_1)$ by \eqref{e:s-el1} as $\Phi(m_1)=s$ by continuity of $\Phi$ in 
$(0,1)$. To conclude we argue as follows:
\begin{align*}
    \mathfrak{G}_s(m)&=2\int_{m}^1 \Big(\frac{\hatk(t)\PotDann(1-t)}{\hatk(t)-\lambda_{m,s}^2}\Big)^{\sfrac12}\dt\\
    &=2\int_{m}^{m_1} \Big(\frac{\hatk(t)\PotDann(1-t)}{\hatk(t)-\lambda_{m,s}^2}\Big)^{\sfrac12}\dt
+2\int_{m_1}^1 \Big(\frac{\hatk(t)\PotDann(1-t)}{\hatk(t)-\lambda_{m,s}^2}\Big)^{\sfrac12}\dt\,.
\end{align*}
The first summand above is infinitesimal by the local Lipschitz continuity of $\hatk^{-1}$.
Indeed, if $K^2$ is the maximum value of $\frac{\hatk(t)\PotDann(1-t)}{(\hatk^{-1})'(t)}$ 
on $[\frac12\hatk^{\sfrac12}(m_1),\hatk^{\sfrac12}(m_1)]$ then for $m$ sufficiently close to $m_1$
we get
\[
\int_{m}^{m_1} \Big(\frac{\hatk(t)\PotDann(1-t)}{\hatk(t)-\lambda_{m,s}^2}\Big)^{\sfrac12}\dt\leq
K\int_{m}^{m_1} \frac1{\big(t-\hatk^{-1}(\lambda_{m,s}^2)\big)^{\sfrac12}}\dt
\leq 2K(m_1-\hatk^{-1}(\lambda_{m,s}^2))^{\sfrac12}\,,
\]
and the conclusion follows by the continuity as $\displaystyle{\lim_{m\to m_1^-}}\lambda_{m,s}=\hatk^{\sfrac12}(m_1)$.
Therefore, we have
\begin{align*}
\lim_{m\to m_1^-}\mathfrak{G}_s(m)&
=\lim_{m\to m_1^-}2\int_{m_1}^1 \Big(\frac{\hatk(t)\PotDann(1-t)}{\hatk(t)-\lambda_{m,s}^2}\Big)^{\sfrac12}\dt
\\&=2\int_{m_1}^1 \Big(\frac{\hatk(t)\PotDann(1-t)}{\hatk(t)-\hatk(m_1)}\Big)^{\sfrac12}\dt=\mathfrak{G}_s(m_1),
\end{align*}
by the monotone convergence theorem.

Therefore, we have shown that $g(s)\geq(\inf_{\Phi^{-1}(s)}\mathfrak{G}_s)\wedge(\displaystyle{\liminf_{m\to 0^+}}\mathfrak{G}_s)\wedge
(\displaystyle{\liminf_{m\to 1^-}}\mathfrak{G}_s)$. The opposite inequality is trivial as $g(s)=\inf_{(0,1)}\mathfrak{G}_s$.
\smallskip

\noindent{{\bf Step 3.}}
Next, consider any $s>\sup\Phi$, then by \eqref{e:s-el2} necessarily $w_{m,s}\in SBV\setminus W^{1,1}((2m-1,1))$ for every $m\in(0,1)$.
Therefore, $\mathfrak{G}_s$ is strictly increasing on $(0,1)$ by case 2 in Lemma~\ref{l:derivate energia}, and thus
\[
g(s)=\inf_{(0,1)}\mathfrak{G}_s=\lim_{m\to0^+}\mathfrak{G}_s(m)=\lim_{m\to 0^+}A(m,\hatk^{\sfrac12}(m))
\]
where in the last inequality we have used the explicit formula \eqref{e:Gs SBV} for $\mathfrak{G}_s$ and the fact that
the second summand in there is infinitesimal.
On the other hand, by using Fatou's lemma we conclude the continuity of 
$[0,1)\ni m\mapsto A(m,\hatk^{\sfrac12}(m))$ in $m=0$ and the thesis. Indeed, by Fatou's lemma we have
\begin{align}\label{e:limite Gs 0}
g(s)&=\lim_{m\to 0^+}A(m,\hatk^{\sfrac12}(m))\notag\\&
=2\lim_{m\to0^+}\int_0^1\Big(\frac{(\hatk(t)-\hatk(m))\vee 0}{\hatk(t)}\,\PotDann(1-t)\Big)^{\sfrac12}\dt\geq2\Psi(1)\,.
\end{align}
The conclusion for $s>\sup\Phi$ follows from the estimate $g(s)\leq \FailureS s\wedge(2\Psi(1))$ if $\hatk(1^-)<\infty$ (cf. item (ii) in Proposition~\ref{p:lepropdig}), and the estimate $g(s)\leq g_0(s)\leq 2\Psi(1)$
if $\hatk(1^-)=\infty$ (cf. item (ii) in Proposition~\ref{p:lepropdig infty}). 
By continuity of $g$ we obtain the equality $g(\sup\Phi)=2\Psi(1)$. Therefore, $\sfratt\leq\sup\Phi$.
\end{proof}

Next, we assume $\Phi$ to be strictly decreasing, as discussed for instance in Remark~\ref{r:strictly decreasing assumption}, to identify a unique minimizing value (defined in terms of $\Phi$ itself) for the problem defining $g(s)$ introduced in \eqref{e:doppio inf rilassato G}.
\begin{theorem}\label{t:teo la g esplicita}
Assume (Hp~$1$)-(Hp~$4$), and (Hp~$5'$),
and that $\Phi:(0,1)\to(0,\infty)$ in \eqref{e:la psi piccola} is strictly decreasing. Then, the following hold
\begin{itemize}
    \item[(i)] $\sfratt=\sup\Phi=\Phi(0^+)$;
    
    \item[(ii)] $g(s)=\FailureS s$ for every $s\in[0,\inf\Phi]$. Moreover, for every $s\in (\inf\Phi,\sfratt)$ 
    let $m_s:=\Phi^{-1}(s)$, then $w_{m_s,s}\in W^{1,1}((2m_s-1,1))$, $\lambda_{m_s,s}=\hatk^{\sfrac12}(m_s)$ and
    \begin{equation}\label{e:la g esplicita}
    g(s)=\mathfrak{G}_s(m_s)\,,
\end{equation}
and $m_s$ is the unique value $m$ such that $g(s)=\mathfrak{G}_s(m)$;

\item[(iii)] $g\in C^1([0,\infty))$ if $\hatk(1^-)<\infty$, and 
$g\in C^1((0,\infty))\cap C^0([0,\infty))$ if $\hatk(1^-)=\infty$, with   
\begin{equation}\label{e:la g' esplicita}
    g'(s)=\hatk^{\sfrac12}(m_s)\,,
\end{equation}
for $s\in[\inf\Phi,\sfratt]$, $g'(s)=\FailureS$ for $s\in[0,\inf\Phi]$, and $g'(s)=0$ for $s\geq\sfratt$.
In particular, $g$ is concave (actually strictly concave on $[\inf\Phi,\sfratt]$).
Moreover, $\FailureS\sfratt\geq2\Psi(1)$ if $\hatk(1^-)<\infty$.
\end{itemize}
\end{theorem}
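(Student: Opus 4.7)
The strict monotonicity of $\Phi$ combined with its continuity (Proposition~\ref{p:BCI}(i)) makes $m_s:=\Phi^{-1}(s)$ a continuous bijection between $(\inf\Phi,\sup\Phi)$ and $(0,1)$, with $\{m:\Phi(m)>s\}=(0,m_s)$ and $\{m:\Phi(m)<s\}=(m_s,1)$. The plan is to first establish (ii) by combining Proposition~\ref{l:Gs reduction} with Lemma~\ref{l:derivate energia}, then derive (iii) by a direct envelope-type differentiation of the resulting explicit formula for $g$, and finally deduce (i) and the final inequality as consequences.

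For (ii), the case $s\in[0,\inf\Phi]$ is Proposition~\ref{l:Gs reduction}(i) together with the continuity of $g$. For $s\in(\inf\Phi,\sup\Phi)$, Lemma~\ref{l:derivate energia}(1) applies on $(0,m_s)$ (where $\lambda_{m,s}<\hatk^{\sfrac12}(m)$) and yields $\mathfrak{G}_s$ strictly decreasing there, while its item (2a) applies on $(m_s,1)$ and yields $\mathfrak{G}_s$ strictly increasing. Hence $m_s$ is the unique global minimizer of $\mathfrak{G}_s$ on $(0,1)$, and Proposition~\ref{l:Gs reduction}(ii) gives $g(s)=\mathfrak{G}_s(m_s)$: the two boundary $\liminf$ terms there are automatically dominated by $\mathfrak{G}_s(m_s)$ thanks to the monotonicity on each side. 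The identity $B(m_s,\hatk^{\sfrac12}(m_s))=\Phi(m_s)=s$ is the boundary case of \eqref{e:s-el1}, so Corollary~\ref{c:regolarità minimo old} gives $w_{m_s,s}\in W^{1,1}$, $\lambda_{m_s,s}=\hatk^{\sfrac12}(m_s)$, and \eqref{e:Gs W11} produces the explicit formula
\begin{equation*}
g(s)=A(m_s,\hatk^{\sfrac12}(m_s))+\hatk^{\sfrac12}(m_s)\,s.
\end{equation*}

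For (iii), starting from the formula above I would compute $g(s+h)-g(s)$ by writing the variation of $m\mapsto A(m,\hatk^{\sfrac12}(m))$ as an integral between $m_s$ and $m_{s+h}$. Using $\partial_m A(m,\hatk^{\sfrac12}(m))=0$, $\partial_\lambda A(m,\lambda)=-B(m,\lambda)$ and $B(m,\hatk^{\sfrac12}(m))=\Phi(m)$ (all derived in the proof of Lemma~\ref{l:derivate energia}), the difference reduces to
\begin{equation*}
g(s+h)-g(s)=\int_{m_s}^{m_{s+h}}(s-\Phi(m))(\hatk^{\sfrac12})'(m)\,dm+h\,\hatk^{\sfrac12}(m_{s+h}).
\end{equation*}
Since $|s-\Phi(m)|\leq|h|$ on the integration interval and $m_{s+h}\to m_s$ by continuity of $\Phi^{-1}$, the first term is $o(|h|)$ and the second equals $h\,\hatk^{\sfrac12}(m_s)+o(|h|)$, yielding $g'(s)=\hatk^{\sfrac12}(m_s)$. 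The matching at the endpoints is immediate: $g'(\inf\Phi^+)=\hatk^{\sfrac12}(1^-)=\FailureS$ matches $g'\equiv\FailureS$ on $[0,\inf\Phi]$, and $g'(\sup\Phi^-)=\hatk^{\sfrac12}(0^+)=0$ matches $g'\equiv 0$ on $[\sfratt,\infty)$, giving $g\in C^1([0,\infty))$ when $\hatk(1^-)<\infty$ and $g\in C^1((0,\infty))\cap C^0([0,\infty))$ when $\hatk(1^-)=\infty$ (since then $\FailureS=\infty$). Strict concavity on $[\inf\Phi,\sfratt]$ follows because $\Phi^{-1}$ is strictly decreasing and $\hatk^{\sfrac12}$ strictly increasing, so $g'$ is strictly decreasing there.

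For (i) and the final estimate, the positivity of $g'$ on $(\inf\Phi,\sup\Phi)$ makes $g$ strictly increasing on $[0,\sup\Phi]$; dominated convergence applied to $A(m_s,\hatk^{\sfrac12}(m_s))$ as $m_s\to 0^+$ (the integrand $(1-\hatk(m_s)/\hatk(t))^{\sfrac12}\PotDann^{\sfrac12}(1-t)$ converges pointwise to $\PotDann^{\sfrac12}(1-t)$ under the dominating bound $\PotDann^{\sfrac12}(1-t)$) yields $g(\sup\Phi^-)=2\Psi(1)$, which combined with Proposition~\ref{l:Gs reduction}(iii) forces $\sfratt=\sup\Phi=\Phi(0^+)$; and $\FailureS\sfratt\geq 2\Psi(1)$ when $\hatk(1^-)<\infty$ is the upper bound $g(\sfratt)\leq\FailureS\sfratt$ of Proposition~\ref{p:lepropdig}(ii). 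The main obstacle is the envelope-type differentiation: the cancellation producing $g'(s)=\hatk^{\sfrac12}(m_s)$ without any regularity on $m_s$ beyond continuity crucially exploits $\partial_m A(m,\hatk^{\sfrac12}(m))=0$, itself the Euler-Lagrange condition at $\lambda=\hatk^{\sfrac12}(m)$, together with the bound $|s-\Phi(m)|\leq|h|$ that absorbs the first-order contribution from the $m$-variation.
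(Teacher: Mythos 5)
Your proof is correct, and for items (i) and (ii) it follows essentially the paper's own route: Proposition~\ref{l:Gs reduction} combined with the two monotonicity regimes of Lemma~\ref{l:derivate energia} on $(0,m_s)$ and $(m_s,1)$, and the boundary case of \eqref{e:s-el1} to get $\lambda_{m_s,s}=\hatk^{\sfrac12}(m_s)$ and $w_{m_s,s}\in W^{1,1}$. Where you genuinely diverge is the differentiability step in (iii). The paper proves $g'(s)=\hatk^{\sfrac12}(m_s)$ by a competitor-scaling argument: it tests $sc^-G_{m_2,s_1}$ with $w=\frac{s_1}{s_2}w_{s_2}$ and squeezes the difference quotients of $g$ from both sides, using only \eqref{e:el} and the continuity of $\Phi^{-1}$, so that no differentiation of $m\mapsto A(m,\hatk^{\sfrac12}(m))$ is ever required. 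You instead differentiate the explicit envelope formula $g(s)=A(m_s,\hatk^{\sfrac12}(m_s))+\hatk^{\sfrac12}(m_s)s$, exploiting $\partial_m A(m,\hatk^{\sfrac12}(m))=0$, $\partial_\lambda A=-B$ and $B(m,\hatk^{\sfrac12}(m))=\Phi(m)$, with the key bound $|s-\Phi(m)|\leq|h|$ on the interval between $m_s$ and $m_{s+h}$; this is clean and arguably shorter, but it does rely on the absolute continuity of $m\mapsto A(m,\hatk^{\sfrac12}(m))$ along the curve $\lambda=\hatk^{\sfrac12}(m)$, which is the boundary of the domain of $A$ where the naive chain rule is not automatic — that is precisely what the auxiliary function $L(m,\rho)$ in the proof of Lemma~\ref{l:derivate energia} supplies, so your citation is legitimate, but this is the point you should make explicit. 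In exchange, your route produces $g'$ in one stroke from quantities already computed in that lemma and handles $\sup\Phi=\infty$ transparently, while the paper's route works directly with the minimizers and needs no regularity of $A$ along that boundary curve. Your treatment of (i), of concavity, and of $\FailureS\sfratt\geq2\Psi(1)$ (via $g(\sfratt)\leq\FailureS\sfratt$ from Proposition~\ref{p:lepropdig}(ii), where the paper equivalently invokes concavity together with $g'(0)=\FailureS$) is fine.
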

\begin{proof} Being $\Phi$ strictly decreasing $\inf\Phi=\Phi(1^-)$ and $\sup\Phi=\Phi(0^+)$,
then Proposition~\ref{l:Gs reduction} yields that $\sfratt\in[\Phi(1^-),\Phi(0^+)]$, and
$\Phi((0,1))=(\Phi(1^-),\Phi(0^+))$ by continuity of $\Phi$ (cf. Proposition~\ref{p:BCI}).

{\bf Step 1: minimality of the relaxed energy at $m=m_s$.}
Fix $s\in (\Phi(1^-),\Phi(0^+))$, and let $m_s:=\Phi^{-1}(s)\in(0,1)$. 
Then $w_s:=w_{m_s,s}\in W^{1,1}((2m_s-1,1))$
by \eqref{e:s-el1}, with $\lambda_{m_s,s}=\hatk^{\sfrac12}(m_s)$,
$w_{s}(2m_s-1)=0$, $w_{s}(1)=s$, and moreover in view of \eqref{e:el}
\begin{equation*}
    w_s(t)=\frac{s}{2}+\int_{m_s}^t \Big(\frac{\hatk(m_s)\PotDann(1-\tau)}{\hatk(\tau)(\hatk(\tau)-\hatk(m_s))}\Big)^{\sfrac12}\dd\tau\,.
\end{equation*}
In addition, the following claims hold due to the strict monotonicity of $\Phi$, using that
$B(m,\cdot)$ is strictly increasing for all $m$, and $B(\cdot,\lambda)$ is strictly decreasing for all $\lambda$
\begin{itemize}
    \item[(i)] $w_{m,s}\in W^{1,1}((2m-1,1))$ and $\lambda_{m,s}\in(0,\hatk^{\sfrac12}(m))$ for every $m\in (0,m_s)$,
    \item[(ii)] $w_{m,s}\in SBV\setminus W^{1,1}((2m-1,1))$ for every $m\in (m_s,1)$.
\end{itemize}
To conclude \eqref{e:la g esplicita} we apply case (ii) in Proposition~\ref{l:Gs reduction} noting that
$\mathfrak{G}_s(m_s)<(\displaystyle{\liminf_{m\to 0^+}}\mathfrak{G}_s)\wedge
    (\displaystyle{\liminf_{m\to 1^-}}\mathfrak{G}_s)$ (the latter inferior limits are actually limits being $\Phi$ monotone)
    thanks to Lemma~\ref{l:derivate energia} and items (i) and (ii) above. 

The equality $g(s)=\FailureS s$ for $s\in[0,\Phi(1^-)]$ follows directly from case (i) in Proposition~\ref{l:Gs reduction}.
\medskip

\noindent{\bf Step 2. Differentiability of $g$.}
To establish the differentiability of $g$ and prove \eqref{e:la g' esplicita}, fix $s_1,s_2 \in (\Phi(1^-),\Phi(0^+))$. 
Thanks to \eqref{e:la g esplicita}, using the competitor $w=\frac{s_1}{s_2}w_{s_2}\in 
W^{1,1}((2m_2-1,1))$, where $m_2:=m_{s_2}$ and $w_{s_2}=w_{s_2,m_2}$, we obtain that
\begin{align}\label{e:stima gs1s2}
    g(s_1)&=\inf_{m\in(0,1)}\inf_{L^1((2m-1,1))}sc^{-}G_{m,s_1}\leq sc^{-}G_{m_2,s_1}(w)\notag\\
    &=2\int_{m_2}^1\Big(\hatk(t)|w'|^2+\PotDann(1-t)\Big)^{\sfrac12}\dt= g(s_2)\notag\\
    &+2\left(\frac{s_1^2}{s_2^2}-1\right)\int_{m_{2}}^1\frac{\hatk(t)|w_{s_2}'|^2}{\big((\frac{s_1}{s_2})^2\hatk(t)|w_{s_2}'|^2+\PotDann(1-t)\big)^{\sfrac12}+\big(\hatk(t)|w_{s_2}'|^2+\PotDann(1-t)\big)^{\sfrac12}}\dt.
\end{align}
In particular, if $s_1<s_2$ we get
\begin{align}\label{e: rapporto incrementale 1}
 & \frac{g(s_1)-g(s_2)}{s_1-s_2}\notag \\ & \geq 2\frac{s_2+s_1}{s_2^2} \int_{m_{2}}^1\frac{\hatk(t)|w_{s_2}'|^2}{\big((\frac{s_1}{s_2})^2\hatk(t)|w_{s_2}'|^2+\PotDann(1-t)\big)^{\sfrac12}+\big(\hatk(t)|w_{s_2}'|^2+\PotDann(1-t)\big)^{\sfrac12}} \dt,
\end{align}
therefore using \eqref{e:el} (see also \eqref{e:lambda ms}), we infer
\begin{align*}
    \liminf_{s_1\to s_2^-} \frac{g(s_1)-g(s_2)}{s_1-s_2}&\geq \frac{2}{s_2}\int_{m_2}^1 \frac{\hatk(t)|w_{s_2}'|^2}{\big(\hatk(t)|w_{s_2}'|^2+ \PotDann(1-t)\big)^{\sfrac12}}\dt\\
    &=\frac{2}{s_2}\int_{m_2}^1\hatk^{\sfrac12}(m_2)w_2' \dt=\hatk^{\sfrac12}(m_2).
\end{align*}
Using \eqref{e:stima gs1s2} with $s_1>s_2$ we obtain 
\begin{align}\label{e: rapporto incrementale 2}
 &\limsup_{s_2\to s_1^-}\frac{g(s_2)-g(s_1)}{s_2-s_1} 
 \leq\limsup_{s_2\to s_1^-} \frac{s_1+s_2}{s^2_2}\int_{m_2}^1 \frac{\hatk(t)|w_{s_2}'|^2}{\big(\hatk(t)|w_{s_2}'|^2+ \PotDann(1-t)\big)^{\sfrac12}}\dt\notag \\
&=\limsup_{s_2\to s_1^-}\frac{s_2+s_1}{s^2_2}\int_{m_2}^1\hatk^{\sfrac12}(m_2)w_2' \dt=\limsup_{s_2\to s_1^-}
\frac{s_2+s_1}{2s_2}\hatk^{\sfrac12}(m_2)=\hatk^{\sfrac12}(m_1),
\end{align}
where in the last equality we have used the continuity of $\Phi^{-1}$.
In particular, we have shown that at each point $s\in(\Phi(1^-),\Phi(0^+))$ the left derivative 
of $g$ exists and equals $\hatk^{\sfrac12}(m_s)$. 

The conclusion for the right derivative can be deduced analogously by passing to the inferior limit as
$s_2\to s_1^+$ in \eqref{e: rapporto incrementale 1}, and to the superior limit as $s_1\to s_2^-$ in
\eqref{e: rapporto incrementale 2}.

To conclude, we need only to prove the differentiability of $g$ in $\Phi(1^-)$ if
$\hatk(1^-)<\infty$, and in $\Phi(0^+)$ in general assuming the latter finite. Indeed,
$g'(s)=\FailureS$ for $s\in[0,\Phi(1^-))$ if $\hatk(1^-)<\infty$,  
and $g'(s)=0$ if $s>\Phi(0^+)$. 
Thus, the formula for $g'$ in \eqref{e:la g' esplicita} 
and the continuity of $\hatk$ respectively in $\Phi(1^-)$ if $\hatk(1^-)<\infty$
and in $\Phi(0^+)$ in general, imply that
\[
\lim_{s\to\Phi(1^-)^+}g'(s)=\hatk^{\sfrac12}(1)=\FailureS,\qquad\lim_{s\to\Phi(0^+)^-}g'(s)=\hatk^{\sfrac12}(0)=0\,.
\]
Furthermore, we deduce that $\sfratt=\Phi(0^+)$ thanks to \eqref{e:la g' esplicita} and 
as by definition $g(s)=2\Psi(1)$ for every $s\geq\sfratt$.

Moreover, $(\Phi(1^-),\Phi(0^+))\ni s\mapsto m_s$ is non-increasing as $\Phi$ is so,
in turn implying the concavity of $g$ by \eqref{e:la g' esplicita}. Eventually, 
if $\hatk(1^-)<\infty$, necessarily $\FailureS\sfratt\geq 2\Psi(1)$ as $g'(0)=\FailureS$ and $g$ is concave.
\end{proof}

In some cases under a more restrictive set of assumptions an expression for $g$ similar to that
in \eqref{e:la g esplicita}, and the differentiability of $g$, have been first established in 
\cite[Proposition~3.3]{BI23} and \cite[Proposition~3.6]{BI23}, respectively.

\begin{remark}
 We point out that for $s\in(\inf\Phi,\sup\Phi)$ the energy $\mathfrak{G}_s$ in Theorem~\ref{t:teo la g esplicita}
 turns out to be $C^1((0,1))$ with $\mathfrak{G}_s'(m_s)=0$ thanks to item (ii) case (b) in Lemma~\ref{l:derivate energia}.
 \end{remark}

\begin{remark}
If $\Phi$ is non-increasing on $(0,1)$, one cannot expect $g$ to be of class $C^1$. Indeed,  
within the hypothesis of Proposition~\ref{l:Gs reduction}, assume that $\Phi(m)= s_0$ if 
$m\in[m_0,m_1]$, for some $0<m_0<m_1<1$, and strictly decreasing otherwise.
Then, from the condition $\Phi(m)= s_0$ for every $m\in[m_0,m_1]$ it follows that the corresponding
minimizers $w_{m,s}\in W^{1,1}$. Defining $\overline{m}:=\inf\{m\in[m_0,m_1]:\,\lambda_{m,s}=\hatk^{\sfrac12}(m)\}$, and setting $\overline{m}=m_1$ if the latter set is empty, we have that 
$g(s_0)=\mathfrak{G}_{s_0}(m)$ for every $m\in [\overline{m},m_1]$. Indeed, 
item (ii) case (b) in Lemma~\ref{l:derivate energia} yields that $\mathfrak{G}_s'(m)=0$
on the interval $(\overline{m},m_1)$, if $\overline{m}<m_1$.

Otherwise, on $(0,1)\setminus [m_0,m_1]$ we may argue as in Theorem~\ref{t:teo la g esplicita}
to obtain formula \eqref{e:la g esplicita}.
Moreover \eqref{e:la g' esplicita} hold for every $s\in (0,s_0)\cup (s_0,1)$, where $\sfratt=\Phi(0^+)$. Therefore, $g\in C^1((0,s_0)\cup(s_0,1))$, and $g$ is not differentiable in $s_0$ as
\begin{equation*}
    \lim_{s\to s_0^-} 
    g'(s)=\hatk^{\sfrac12}(m_1) \neq 
    \lim_{s\to s_0^+} 
    g'(s)=\hatk^{\sfrac12}(m_0)\,.
\end{equation*}
\end{remark}

Next, we deal with the case $\Phi$ increasing and $\hatk(1^-)<\infty$,
we show that the cohesive law $g$ always corresponds to that in the Dugdale cohesive model.
\begin{theorem}\label{t:teo la g esplicita Dugdale}
Assume (Hp~$1$)-(Hp~$4$), and (Hp~$5'$), $\hatk(1^-)<\infty$, and that
$\Phi:(0,1)\to(0,\infty)$ in \eqref{e:la psi piccola} is non-decreasing. 
 Then, $g(s)=\FailureS s\wedge (2\Psi(1))$ for every $s\geq 0$.
\end{theorem}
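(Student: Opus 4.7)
The plan is to exploit the characterization of $g$ given in Proposition~\ref{l:Gs reduction}, combined with the monotonicity analysis of the one-dimensional function $\mathfrak{G}_s$ furnished by Lemma~\ref{l:derivate energia}. Since $\Phi$ is non-decreasing, $\inf\Phi=\Phi(0^+)$ and $\sup\Phi=\Phi(1^-)$, and we treat the three ranges $[0,\Phi(0^+)]$, $(\Phi(0^+),\Phi(1^-))$ and $[\Phi(1^-),\infty)$ separately.

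For $s\in[0,\Phi(0^+)]$, Proposition~\ref{l:Gs reduction}(i) gives $g(s)=\FailureS s$. To match the claimed formula one only needs $\FailureS s\leq 2\Psi(1)$ on this range; this follows from the universal bound $g\leq 2\Psi(1)$ in Proposition~\ref{p:lepropdig}(ii), applied at $s=\Phi(0^+)$, which forces $\FailureS \Phi(0^+)\leq 2\Psi(1)$. Analogously, for $s\geq\Phi(1^-)$, item~(iii) of the same proposition yields $g(s)=2\Psi(1)$, and the continuity of $g$ together with the analysis of the middle range (below) will force $\FailureS \Phi(1^-)\geq 2\Psi(1)$, so $2\Psi(1)\leq\FailureS s$ on this range too.

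The core of the argument is the middle interval $s\in(\Phi(0^+),\Phi(1^-))$. Set $m_s:=\inf\Phi^{-1}(s)$ and $M_s:=\sup\Phi^{-1}(s)$; by monotonicity of $\Phi$, $\Phi(m)<s$ for $m\in(0,m_s)$, $\Phi(m)=s$ on $[m_s,M_s]$, and $\Phi(m)>s$ for $m\in(M_s,1)$. Applying Lemma~\ref{l:derivate energia} piecewise: on $(0,m_s)$, $w_{m,s}\in SBV\setminus W^{1,1}$ (via \eqref{e:s-el2}) and case~(ii)(a) gives $\mathfrak{G}_s$ strictly increasing; on $(m_s,M_s)$, $\lambda_{m,s}=\hatk^{\sfrac12}(m)$ and case~(ii)(b) gives $\mathfrak{G}_s'\equiv 0$; on $(M_s,1)$, $\lambda_{m,s}<\hatk^{\sfrac12}(m)$ and case~(i) gives $\mathfrak{G}_s$ strictly decreasing. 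Together with the continuity of $\mathfrak{G}_s$ across $m_s$ and $M_s$ (cf. \eqref{e:ciruzzo} and the analogous computation in the proof of Proposition~\ref{l:Gs reduction}), one concludes that the infimum of $\mathfrak{G}_s$ on $(0,1)$ is attained only as a boundary limit, so $g(s)=\mathfrak{G}_s(0^+)\wedge\mathfrak{G}_s(1^-)$.

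It remains to evaluate these two boundary limits. For $m\to 0^+$ we are in the $SBV$ regime and \eqref{e:Gs SBV} gives $\mathfrak{G}_s(m)=A(m,\hatk^{\sfrac12}(m))+\hatk^{\sfrac12}(m)s$; since $\hatk(0)=0$, the second summand vanishes, while $A(m,\hatk^{\sfrac12}(m))\to 2\Psi(1)$ by dominated convergence (or by the computation already carried out in \eqref{e:limite Gs 0}). For $m\to 1^-$ we are in the $W^{1,1}$ regime and \eqref{e:Gs W11} gives $\mathfrak{G}_s(m)=A(m,\lambda_{m,s})+\lambda_{m,s}s$; here $A(m,\lambda_{m,s})\to 0$ since the integral shrinks to an empty interval, and $\lambda_{m,s}\to\FailureS$ because $B(m,\lambda_{m,s})=s>0$ while $B(m,\lambda)\to 0$ for any fixed $\lambda<\FailureS$ (so any subsequential limit $\lambda^*<\FailureS$ would force $s=0$, a contradiction). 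Hence $\mathfrak{G}_s(1^-)=\FailureS s$ and $\mathfrak{G}_s(0^+)=2\Psi(1)$, whence $g(s)=\FailureS s\wedge 2\Psi(1)$ on the middle range. I expect the main subtlety to be precisely this last step, namely verifying that $\lambda_{m,s}\to\FailureS$ and that the boundary limits of $\mathfrak{G}_s$ can be computed without upper semicontinuity issues coming from the two different regimes meeting at $m_s$ and $M_s$; once this is settled, the conclusion extends to all $s\geq 0$ by continuity of $g$.
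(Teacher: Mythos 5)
Your proposal is correct and follows essentially the same route as the paper: reduction to Proposition~\ref{l:Gs reduction} for the two outer ranges, and on $(\Phi(0^+),\Phi(1^-))$ the same monotonicity analysis of $\mathfrak{G}_s$ via Lemma~\ref{l:derivate energia} (increasing on the sublevel set, flat on $\Phi^{-1}(s)$, strictly decreasing on the superlevel set), so that only the boundary limits at $m\to0^+$ and $m\to1^-$ matter. The only minor deviation is that you compute these limits exactly (dominated convergence at $0^+$, and $\lambda_{m,s}\to\FailureS$ at $1^-$), whereas the paper settles for the lower bounds $\geq 2\Psi(1)$ and $\geq\FailureS s$ and closes with the a priori upper bound $g\leq\FailureS s\wedge 2\Psi(1)$ from Proposition~\ref{p:lepropdig}(ii); both are fine.
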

\begin{proof} The proof is similar to that of Theorem~\ref{t:teo la g esplicita}.  
Recall that by continuity $\Phi((0,1))=(\Phi(0^+),\Phi(1^-))$.
Next, we use Proposition~\ref{l:Gs reduction} to infer that
$g(s)=\FailureS s$ for all $s\in[0,\Phi(0^+)]$ by case (i) there, and
 $g(s)=2\Psi(1)$ for all $s\geq\Phi(1^-)$ by case (iii) there.
Thus, we are left with considering the case $\Phi(0^+)<\Phi(1^-)$ and
$s\in (\Phi(0^+),\Phi(1^-))$.
Let $m\in[m_0,m_1]=\Phi^{-1}(s)\in(0,1)$ and note that $w_{m,s}\in W^{1,1}((2m-1,1))$
by \eqref{e:s-el1}, with $\lambda_{m,s}=\hatk^{\sfrac12}(m)$,
and moreover in view of \eqref{e:el}
\begin{equation*}
    w_{m,s}(t)=\frac{s}{2}+\int_{m}^t \Big(\frac{\hatk(m)\PotDann(1-\tau)}{\hatk(\tau)(\hatk(\tau)-\hatk(m))}\Big)^{\sfrac12}\dd\tau\,.
\end{equation*}
In addition, the following claims hold due to the monotonicity of $\Phi$ 
\begin{itemize}
    \item[(i)] $w_{m,s}\in SBV\setminus W^{1,1}((2m-1,1))$ for every $m\in (0,m_0)$
    \item[(ii)] $w_{m,s}\in W^{1,1}((2m-1,1))$ and $\lambda_{m,s}\in(0,\hatk^{\sfrac12}(m))$ for every $m\in (m_1,1)$.
\end{itemize}
To conclude $g(s)=\FailureS s\wedge(2\Psi(1))$ we apply case (ii) in Proposition~\ref{l:Gs reduction} noting that
$\mathfrak{G}_s(m_s)>(\displaystyle{\liminf_{m\to 0^+}}\mathfrak{G}_s)\wedge
(\displaystyle{\liminf_{m\to 1^-}}\mathfrak{G}_s)$ (the latter inferior limits are actually limits by
monotonicity) thanks to Lemma~\ref{l:derivate energia} and items (i) and (ii) above. Therefore, we have that
\[
g(s)=\inf_{(0,1)}\mathfrak{G}_s(m)=
(\lim_{m\to 0^+}\mathfrak{G}_s(m))\wedge(\lim_{m\to1^-}\mathfrak{G}_s(m))\,.
\]
The first limit is easily seen to be bigger than $2\Psi(1)$ arguing as to deduce
\eqref{e:limite Gs 0} in Proposition~\ref{l:Gs reduction}). In addition,
the second limit above is bigger than $\FailureS s$ arguing as to deduce \eqref{e:mathfrakGs geq s} in 
Proposition~\ref{l:Gs reduction}). Therefore,  $g(s)\geq \FailureS s\wedge(2\Psi(1))$ for every $s\in(\Phi(0^+),\Phi(1^-))$,
the reverse inequality is contained in item (ii) in Proposition~\ref{p:lepropdig}.
\end{proof}

\section{Reconstructing the cohesive law \texorpdfstring{$g$}{...}}
\label{s:sezione 5.3}

In this section we show how to assign a given cohesive law $g_0$, either with a linear or a superlinear behaviour for small amplitudes of the jump, by appropriately choosing the parameters of the phase-field model in \eqref{functeps}.
More precisely, either fixing the damage potential and choosing appropriately the degradation function or vice versa, we consider the corresponding functionals $\Functeps$ and show that $g_0$ equals the surface energy density of their $\Gamma$-limit according to Theorem~\ref{t:finale}.

A scaling argument shows that the assumption $\varphi'(0^+)=1$ in 
Theorems~\ref{t:hatf fixed}, \ref{t:potdann fixed}, \ref{t:potdann fixed infinito}, 
and \ref{t:hatf fixed infinito} below can be easily dispensed with. The functions in the conclusions 
of those statements would then be depending on the value $\varphi'(0^+)\neq1$.

\subsection{The linear case}

We start with reconstructing cohesive laws that behave linearly for small jump amplitudes.
More precisely, let 
\begin{itemize}
\item[(Hp~$6$)] $g_0\in C^1([0,\infty))$, $g_0^{-1}(0)=\{0\}$, $g_0$ bounded and non-decreasing;
\item[(Hp~$7$)] $g_0$ concave,  $g_0'(0)=\FailureS\in(0,\infty)$, $g'_0$ is strictly decreasing on $[0,(\sfratt)_0)$ where 
\[
(\sfratt)_0:=\sup \{s \in [0,\infty):\, g_0(s)<\lim_{s\to\infty}g_0(s)\}\in(0,\infty]\,.
\]
\end{itemize}
In addition, it is convenient to define the auxiliary function $R:[0,1]\to[0,g_0(\infty)]$ by
\begin{equation}\label{e:La R grande}
    R(t):=
    \begin{cases}
        g_0\big((g_0')^{-1}((\FailureS^2-t)^{\sfrac12})\big) \; & \textup{if $t\in [0,\FailureS^2)$} \\
        g_0((\sfratt)_0) \; & \textup{if $t=\FailureS^2$}.
     \end{cases}
\end{equation}
As a consequence of (Hp~$6$), (Hp~$7$) and the definition of $(\sfratt)_0$, 
$R$ is strictly increasing. Moreover, $g_0'(s)\to0$ for $s\to (\sfratt)_0^-$ as $g_0\in C^1([0,\infty))$ is concave and strictly decreasing, so that
$g_0(\infty)<\infty$. Therefore, $R$ is continuous. We further assume that
\begin{itemize}
\item[(Hp~$8$)] $R$ is convex on $[0,\FailureS^2]$ and $W^{1,p}((0,\FailureS^2))$ for some $p>2$.
\end{itemize}
Having introduced $R$, for every $\tau\in [0,\FailureS^2]$ consider 
\begin{equation}\label{e:la phi della R}
    \phi(\tau):=\frac1{\pi}\int^\tau_0\frac{R'(t)}{(\tau-t)^{\sfrac12}} \dt\,.
\end{equation} 
By Abel's inversion theorem \cite[Theorem~1.A.1]{Vessella} (see also formula (1.B.1ii)), 
 $\phi$ is the unique $L^1((0,1))$ function satisfying the following Abel integral equation 
\begin{equation*}
    R(t)=\int^{t}_0 \frac{\phi(\tau)}{(t-\tau)^{\sfrac12}}\dd\tau\,,
\end{equation*}
for every $t\in [0,\FailureS^2]$. This is a consequence of the smoothness of $R$ and of $R(0)=0$.
 Moreover, the regularity, the strict monotonicity and the convexity of $R$ in (Hp~$8$) 
 yield that $\phi \in C^0([0,\FailureS^2])$, $\phi^{-1}(0)=\{0\}$, and $\phi$ is strictly increasing
 (cf. \cite[Theorem~4.1.4]{Vessella}). More generally,  by changing variables
 in the very definition of $\phi$ above (i.e., $t=\tau y$), the strict monotonicity of $\phi$
 follows provided $R'(t)t^{\sfrac12}$ is non-decreasing and not constant on $(0,\FailureS^2)$.

We are now ready to reconstruct $g_0$. 
In the ensuing two results we will always choose $\PotDann=\FailureS^2 Q$, so that $\hatk=\FailureS^2\hatf$.
We start fixing $\hatk$ with $\hatk'(t)>0$ for every $t\in(0,1)$, and find the appropriate $\PotDann$.
In particular, $\hatk(t)=\FailureS^2t^2$ satisfies all the conditions below.
\begin{theorem}\label{t:hatf fixed}
Assume that $g_0$ satisfy (Hp~$6$)-(Hp~$7$), $R$ satisfy (Hp~$8$), and $\hatk_0$ is such that 
$\hatk_0(0)=0$, $\hatk_0(1^-)=\FailureS^2$, $\hatk_0^{\sfrac12}\in C^1([0,1])$ with $\hatk_0'(t)>0$ 
for every $t\in(0,1)$, and concave in a left neighborhood of $t=1$ (in particular, $\hatk_0$ satisfies
(Hp~$5'$)).

Then there exists $\PotDann_0\in C^0([0,1])$ such that $\hatf(t):=\FailureS^{-2}\hatk_0(t)$, $\FailureS^2Q(t)=\PotDann(t):=\PotDann_0(t)$ on $[0,1]$ satisfy (Hp~$1$)-(Hp~$2$), 
and for any $\varphi$ satisfying (Hp~$3$)-(Hp~$4$) with $\varphi'(0^+)=1$ the corresponding 
functionals $\Functeps$ in \eqref{functeps} $\Gamma$-converge to $F_{\FailureS}$ in \eqref{F0} with $g=g_0$.
\end{theorem}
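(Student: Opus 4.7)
The plan is to leverage Theorem~\ref{t:teo la g esplicita} by constructing $\PotDann_0$ in such a way that the auxiliary function $\Phi$ from \eqref{e:la psi piccola} associated to the chosen $\hatk_0$ and $\PotDann_0$ is strictly decreasing on $(0,1)$ and obeys
\begin{equation*}
\Phi(m)=(g_0')^{-1}\bigl(\hatk_0^{\sfrac12}(m)\bigr)\qquad\text{for every }m\in(0,1)\,.
\end{equation*}
Once this is achieved, formula \eqref{e:la g' esplicita} in Theorem~\ref{t:teo la g esplicita} yields $g'(s)=\hatk_0^{\sfrac12}(\Phi^{-1}(s))=g_0'(s)$ for $s\in[0,(\sfratt)_0]$, $g'\equiv 0$ afterwards, and $g(0)=0=g_0(0)$; hence $g\equiv g_0$, and Theorem~\ref{t:finale} delivers the claimed $\Gamma$-convergence.

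First, I would determine $\PotDann_0$ from the above target for $\Phi$ by solving an Abel integral equation. The change of variable $\lambda=\FailureS^2-\hatk_0(m)$ transforms the target relation into the form \eqref{e:Phi Abel}, namely
\[
\frac{(g_0')^{-1}\!\bigl((\FailureS^2-\lambda)^{\sfrac12}\bigr)}{2(\FailureS^2-\lambda)^{\sfrac12}}=\int_0^\lambda\frac{F(\tau)}{(\lambda-\tau)^{\sfrac12}}\,\dd\tau\,,\qquad
F(\tau)=\frac{(\Psi\circ\hatk_0^{-1})'(\FailureS^2-\tau)}{(\FailureS^2-\tau)^{\sfrac12}}\,.
\]
The auxiliary function $R$ in \eqref{e:La R grande} packages the left-hand side, and the function $\phi$ in \eqref{e:la phi della R} encodes its Abel inverse: by Abel's classical inversion theorem (\cite[Theorem~1.A.1]{Vessella}) the equation uniquely recovers $F$, and hence $\Psi\circ\hatk_0^{-1}$ up to the normalization $\Psi(0)=0$. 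I would then recover $\Psi$ by composition with $\hatk_0$, and set $\PotDann_0(x):=(\Psi'(1-x))^2$ so that $\Psi(t)=\int_0^t\PotDann_0^{\sfrac12}(1-\tau)\,\dd\tau$ in agreement with \eqref{e:Psi}.

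Next, the plan is to verify all the structural hypotheses for the triple $\hatf=\FailureS^{-2}\hatk_0$, $\PotDann=\PotDann_0$ and $Q=\FailureS^{-2}\PotDann_0$. The regularity and positivity of $\phi$ granted by \eqref{e:la phi della R} together with (Hp~$8$) and \cite[Theorem~4.1.4]{Vessella} propagate through $F\to\Psi\to\PotDann_0$ to give $\PotDann_0\in C^0([0,1])$ with $\PotDann_0^{-1}(0)=\{0\}$, so (Hp~$1$) and (Hp~$2$) follow; the limit in \eqref{e:FailureS def} equals $\FailureS$ because $\omega/Q\equiv \FailureS^2$ by construction, and the $C^1$ regularity of $\hatk_0^{\sfrac12}$ together with $\hatk_0'>0$ yields (Hp~$5'$). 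The concavity of $\hatk_0^{\sfrac12}$ in a left neighborhood of $t=1$ is what secures the monotonicity in a right neighborhood of the origin of the function $f$ in \eqref{e:f}, which in our setting reduces to $(\hatk_0(t)/\PotDann_0(1-t))^{\sfrac12}$.

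Finally, strict decreasingness of $\Phi$ is automatic by construction: $(g_0')^{-1}$ is strictly decreasing on its domain thanks to (Hp~$7$), while $\hatk_0^{\sfrac12}$ is strictly increasing, and the endpoint values $\Phi(0^+)=(\sfratt)_0$ and $\Phi(1^-)=0$ match the identification $\sfratt=(\sfratt)_0$ in Theorem~\ref{t:teo la g esplicita}. \emph{The main obstacle} is not the formal derivation but the regularity bookkeeping: propagating the $C^0$ regularity of $\phi$ (via \cite[Theorem~4.1.4]{Vessella}) along the chain $\phi\to F\to(\Psi\circ\hatk_0^{-1})'\to\Psi'\to\PotDann_0$ so that $\PotDann_0$ is continuous up to the endpoint $x=0$ (corresponding to $m\to 1^-$, where $\hatk_0$ approaches $\FailureS^2$), with $\PotDann_0^{-1}(0)=\{0\}$ and $f$ monotone near $0$. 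This is exactly where (Hp~$8$) and the local concavity of $\hatk_0^{\sfrac12}$ near $1$ intervene.
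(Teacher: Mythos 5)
Your overall route---prescribe $\Phi=(g_0')^{-1}\circ\hatk_0^{\sfrac12}$, recover $\PotDann_0$ by Abel inversion of \eqref{e:Phi Abel}, and then read $g'=g_0'$ off \eqref{e:la g' esplicita}---is viable and, after reshuffling, lands on the same construction as the paper (which defines $\PotDann_0$ directly by \eqref{e:PotDann zero} and only afterwards verifies $g\circ(g')^{-1}=g_0\circ(g_0')^{-1}$ by computing the energy in \eqref{e:Abel equation}). The genuine gap is in your decisive step, where two different Abel problems are conflated. The datum of your equation is $D(\lambda):=(g_0')^{-1}\big((\FailureS^2-\lambda)^{\sfrac12}\big)/\big(2(\FailureS^2-\lambda)^{\sfrac12}\big)$ and its unknown is $F$; the paper's $R(\lambda)=g_0\big((g_0')^{-1}((\FailureS^2-\lambda)^{\sfrac12})\big)$ is a different function, and the $\phi$ of \eqref{e:la phi della R} is the Abel inverse of $R$, not of $D$. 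So the sentence ``$R$ packages the left-hand side and $\phi$ encodes its Abel inverse'' is not correct as stated. What makes your target $\Phi$ come out of the paper's $\phi$ is the identity $F(\tau)=\phi(\tau)/\big(2(\FailureS^2-\tau)\big)$, i.e.
\begin{equation*}
\int_0^\lambda\frac{\phi(\tau)}{2(\FailureS^2-\tau)(\lambda-\tau)^{\sfrac12}}\,\dd\tau
=\frac{(g_0')^{-1}\big((\FailureS^2-\lambda)^{\sfrac12}\big)}{2(\FailureS^2-\lambda)^{\sfrac12}}\,,
\end{equation*}
which does not follow from Abel's inversion theorem alone; it is exactly where the content of the proof lies. (It can be checked by inserting \eqref{e:la phi della R}, exchanging the order of integration, and using $\int_t^\lambda\frac{\dd\tau}{(\FailureS^2-\tau)((\tau-t)(\lambda-\tau))^{\sfrac12}}=\frac{\pi}{((\FailureS^2-t)(\FailureS^2-\lambda))^{\sfrac12}}$ together with $R'(t)=(\FailureS^2-t)^{\sfrac12}\,\frac{\dd}{\dd t}\big[(g_0')^{-1}((\FailureS^2-t)^{\sfrac12})\big]$; the paper performs the equivalent computation downstream, at the level of $g$, in \eqref{e:Abel equation}.) Without this link your definition of $\PotDann_0$ is not pinned down, and the properties you call ``automatic''---strict decrease of $\Phi$, $\Phi(0^+)=(\sfratt)_0$, $\Phi(1^-)=0$---presuppose precisely the unproved identification.

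A second, smaller gap is that the structural verification is only named, not done. The paper obtains (Hp~$1$)--(Hp~$2$) from the explicit formula \eqref{e:PotDann zero}, using that $\phi$ is strictly increasing (this is where (Hp~$8$) and \cite[Theorem~4.1.4]{Vessella} enter) together with the concavity of $\hatk_0^{\sfrac12}$ near $t=1$; and the strict monotonicity of $\Phi$ is not free but is proved via item (v) of Proposition~\ref{p:BCI}, i.e.\ the convexity of $(\hatk_0\circ\Psi_0^{-1})^{\sfrac12}$, again from $\phi$ increasing. If instead you Abel-invert your own datum $D$ directly, then positivity, continuity up to the endpoints, $\PotDann_0^{-1}(0)=\{0\}$ and the monotonicity of $Q$ and $f$ required in (Hp~$1$) must be derived from properties of $D$, whereas the standing hypothesis (Hp~$8$) is a condition on $R$; relating the two again requires the identity above. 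Finally, the reduction of the case $\FailureS\neq1$ to $\FailureS=1$ (the paper's closing scaling step, which is why the statement carries $\FailureS^2Q=\PotDann$) is not addressed. All of this is repairable, but as written the proposal skips the computations that constitute the proof.
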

\begin{proof}
Let us first assume $\FailureS=1$. Let $\phi$ be defined in \eqref{e:la phi della R},
and then set for every $t\in [0,1]$
\begin{equation}\label{e:PotDann zero}
    \PotDann_0(1-t):=\big((\hatk_0^{\sfrac12}(t))'\phi(1-\hatk_0(t))\big)^2\,.
\end{equation}
Clearly, $\PotDann_0\in C^0([0,1])$ and with the choices $\hatf:=\hatk_0$ and $Q=\PotDann:=\PotDann_0$, 
(Hp~$1$) and (Hp~$2$) are satisfied as $\phi$ is strictly increasing and $\hatk_0^{\sfrac12}$ is concave in a left neighborhood of $t=1$.
Moreover, \eqref{e:FailureS def} holds with $\FailureS=1$. 
In particular, choosing any $\varphi$ that satisfies (Hp~$3$) and (Hp~$4$) with 
$\varphi'(0^+)=1$, by Theorem~\ref{t:finale} $(\Functeps)_\eps$ $\Gamma$-converges to $F_1$, where the surface energy density $g$ is given by 
\begin{equation*}
      g(s):=\inf_{(u,v)\in \calulu_s} \int_0^1 \Big(\hatk_0(v)|u'|^2+\PotDann_0(1-v)|v'|^2\Big)^{\sfrac12}\dx.
\end{equation*}
Note that $\Phi(1^-)=0$ by \eqref{e:limite Phi =0} in Proposition~\ref{p:BCI}. Indeed, 
using \eqref{e:PotDann zero} we have
\[
F(t)=\frac{(\Psi_0\circ\hatk_0^{-1})'(1-t)}{(1-t)^{\sfrac12}}
=\frac{\phi(t)}{2(1-t)}\in C^0([0,z])
\]
for every $z\in(0,1)$  being $\phi\in C^0([0,1])$. In addition, $\Phi$ is strictly decreasing as
\eqref{e:PotDann zero} implies that for every $t\in[0,1]$
\begin{equation*}
\big(\Psi_0(\hatk_0^{-1}(t^2))\big)'=2t\big(\Psi_0\circ \hatk_0^{-1}\big)'(t^2)=\phi(1-t^2)\,,
\end{equation*}
the claim thus follows from item (v) in Proposition \ref{p:BCI}, and the fact that $\phi$ 
is strictly increasing.
Therefore, we may apply Theorem~\ref{t:teo la g esplicita} to deduce for every $s\in (0,\sfratt)$
\begin{align}\label{e:Abel equation}
    g(s)&=2\int_{\hatk_0^{-1}((g'(s))^2)}^1 \Big(\frac{\hatk_0(t)\PotDann_0(1-t)}{\hatk_0(t)-(g'(s))^2}\Big)^{\sfrac12}\dt\notag\\
    &=2\int_{0}^{1-(g'(s))^2} \Big((1-r)\frac{\PotDann_0(1-\hatk_0^{-1}(1-r))}{1-(g'(s))^2-r}\Big)^{\sfrac12}\frac1{\hatk_0'(\hatk_0^{-1}(1-r))}\dd r\,,
\end{align}
having used the change of variables $r=1-\hatk_0(t)$. Therefore, on setting $\lambda=1-(g'(s))^2$ 
and using the very definition of $\PotDann_0$ we conclude for every $\lambda\in [0,1)$
\begin{align*}
g((g')^{-1})\big((1-\lambda)^{\sfrac12}\big)
=\int_0^\lambda \frac{\phi(r)}{(\lambda-r)^{\sfrac12}} \dd r= R(\lambda)
=g_0\big((g_0')^{-1}((1-\lambda)^{\sfrac12}\big)\,,
\end{align*}
where we have used that $g'$ is invertible in view of \eqref{e:la g' esplicita}.
Inverting $g\circ(g')^{-1}$ on $[0,g(\sfratt))$ and $g_0\circ(g_0')^{-1}$ on $[0,g_0((\sfratt)_0))$ we conclude that 
$(g^{-1})'=(g_0^{-1})'$ on $[0,g_0((\sfratt)_0)\wedge g(\sfratt))$.
From this, using the very definitions of $(\sfratt)_0$ and $\sfratt$,
together with $g_0'((\sfratt)_0)=g'(\sfratt)=0$, $g_0'>0$ on $(0,(\sfratt)_0)$, 
and $g'>0$ on $(0,\sfratt)$, we deduce that $(\sfratt)_0=\sfratt$. Being $g^{-1}(0)=g_0^{-1}(0)=0$, we conclude that $g(s)=g_0(s)$ for every $s\in [0,\infty)$.

If $\FailureS\neq 1$, apply the previous argument to $\widetilde{g}_0:=\FailureS^{-1}g_0$ and $\widetilde{\hatf}_0(t)=t^2$, to find $\widetilde{\PotDann}_0$ such that $\widetilde{g}_0$ is the surface energy density of the $\Gamma$-limit of the corresponding functionals $\Functeps$ with $\widetilde{Q}=\widetilde{\PotDann}:=\widetilde{\PotDann}_0$ and $\widetilde{\hatf}:=\widetilde{\hatf}_0$. The conclusion for $g_0$
then follows by taking $\hatf(t):=t^2$, $\PotDann:=
\FailureS^2\widetilde{\PotDann}_0$, and $Q:=\widetilde{\PotDann}_0$.
\end{proof}

Next, given $g_0$, we fix $\PotDann$ (and $Q=\FailureS^{-2} \PotDann$), and determine $\hatk$,  
equivalently $\hatf$.
\begin{theorem}\label{t:potdann fixed}
Let $g_0$ satisfy (Hp~$6$)-(Hp~$7$), $R$ satisfy (Hp~$8$), and $\PotDann_0\in C^0([0,1],[0,\infty))$ , 
$\PotDann_0^{-1}(\{0\})=\{0\}$, $\PotDann_0$ increasing in a right neighbourhood of $0$ and such that $2\Psi_0(1)=g_0(\infty)$, where 
$\Psi_0$ is defined as in \eqref{e:Psi} integrating $\PotDann_0$.

Then, there exists $\hatf_0\in C^0([0,1])\cap C^1((0,1))$ with $\hatf_0'>0$ on $(0,1)$, such that
$\hatf:=\hatf_0$, $\FailureS^2 Q=\PotDann:=\PotDann_0$ satisfy (Hp~$1$), (Hp~$2$), $\hatk:=\FailureS^2\hatf$ satisfies (Hp~$5'$), and for any $\varphi$ satisfying (Hp~$3$)-(Hp~$4$) with $\varphi'(0^+)=1$, the functionals $\Functeps$ in \eqref{functeps} $\Gamma$-converge to $F_{\FailureS}$ in \eqref{F0} with $g=g_0$.
\end{theorem}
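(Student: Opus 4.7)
The plan is to mirror the argument of Theorem~\ref{t:hatf fixed}, inverting the roles of the two unknown functions: fix $\PotDann_0$ and solve for $\hatf_0$. As there, I first treat $\FailureS=1$ and then recover the general case by the rescaling $\widetilde{g}_0:=\FailureS^{-1}g_0$, $\widetilde{\PotDann}_0:=\FailureS^{-2}\PotDann_0$, which preserves the identity $2\widetilde\Psi_0(1)=\widetilde g_0(\infty)$.

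For $\FailureS=1$, define $\phi$ by \eqref{e:la phi della R}, so that $\phi\in C^0([0,1])$ is strictly increasing with $\phi(0)=0$, exactly as in Theorem~\ref{t:hatf fixed}. Inspection of \eqref{e:PotDann zero} suggests that the compatibility between $\hatk_0=\hatf_0$ and $\PotDann_0$ amounts to the separable ODE
\[
\phi(1-\hatk_0(t))\,\bigl(\hatk_0^{\sfrac12}\bigr)'(t)=\PotDann_0^{\sfrac12}(1-t),\qquad \hatk_0(0)=0.
\]
Setting $H(y):=\int_0^y\phi(1-s^2)\,\dd s$, the ODE integrates to $H\bigl(\hatk_0^{\sfrac12}(t)\bigr)=\Psi_0(t)$, and I define
\[
\hatf_0(t):=\hatk_0(t):=\bigl(H^{-1}(\Psi_0(t))\bigr)^2.
\]

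To verify the hypotheses, note that $H$ is strictly increasing with $H(0)=0$ and, via the substitution $\tau=1-s^2$,
\[
H(1)=\int_0^1\phi(1-s^2)\,\dd s=\frac12\int_0^1\frac{\phi(\tau)}{(1-\tau)^{\sfrac12}}\,\dd\tau=\frac{R(1)}{2}=\frac{g_0(\infty)}{2}=\Psi_0(1),
\]
where the last equality invokes the standing assumption $2\Psi_0(1)=g_0(\infty)$. Hence $\hatf_0(0)=0$, $\hatf_0(1)=1$, and $\hatf_0\in C^0([0,1])\cap C^1((0,1))$ with $\hatf_0'>0$ on $(0,1)$, so (Hp~1), (Hp~2) and (Hp~5') all hold with $\hatf=\hatf_0$, $\PotDann=Q=\PotDann_0$. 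Moreover $H'(y)=\phi(1-y^2)$ is strictly decreasing in $y$ because $\phi$ is strictly increasing, so $H$ is concave; equivalently $H^{-1}=\bigl(\hatk_0\circ\Psi_0^{-1}\bigr)^{\sfrac12}$ is convex, and item~(v) of Proposition~\ref{p:BCI} yields that $\Phi$ is strictly decreasing.

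With all hypotheses in place, Theorem~\ref{t:finale} gives $\Gamma$-convergence of $\Functeps$ to $F_{1}$ with some surface energy density $g$, and Theorem~\ref{t:teo la g esplicita} supplies the Abel-type representation \eqref{e:Abel equation}. Substituting the defining ODE into the integrand of \eqref{e:Abel equation} and changing variable $r=1-\hatk_0(t)$ collapses it to $\phi(r)/\bigl(2(\lambda-r)^{\sfrac12}\bigr)$, so for $\lambda=1-(g'(s))^2$
\[
g(s)=\int_0^{\lambda}\frac{\phi(r)}{(\lambda-r)^{\sfrac12}}\,\dd r=R(\lambda)=g_0\bigl((g_0')^{-1}((1-\lambda)^{\sfrac12})\bigr),
\]
and inverting $g\circ(g')^{-1}$ and $g_0\circ(g_0')^{-1}$ as in the concluding lines of Theorem~\ref{t:hatf fixed} forces $(\sfratt)_0=\sfratt$ and $g\equiv g_0$. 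The main obstacle is solving the separable ODE while simultaneously securing the regularity and monotonicity needed to trigger Theorem~\ref{t:teo la g esplicita}; the Abel-inversion step then closes the argument by exactly the same uniqueness principle as in Theorem~\ref{t:hatf fixed}.
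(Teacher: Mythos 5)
Your proposal is correct and is essentially the paper's own proof: your $\hatf_0=(H^{-1}\circ\Psi_0)^2$ with $H(y)=\int_0^y\phi(1-s^2)\,\dd s$ coincides, after the substitution $\tau=1-s^2$ and the identity $H(1)=\tfrac12 R(1)=\Psi_0(1)$, with the paper's definition $\hatf_0^{-1}(1-t)=\Psi_0^{-1}\big(\tfrac{g_0(\infty)}2-\tfrac12\int_0^t\phi(\tau)(1-\tau)^{-\sfrac12}\dd\tau\big)$, and the subsequent steps (convexity of $H^{-1}=(\hatk_0\circ\Psi_0^{-1})^{\sfrac12}$, Proposition~\ref{p:BCI}(v), Theorem~\ref{t:teo la g esplicita}, the change of variable $r=1-\hatk_0(t)$ and Abel inversion, then rescaling for $\FailureS\neq1$) are the same. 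The only detail you leave implicit is the check $\Phi(1^-)=0$ via \eqref{e:F} and \eqref{e:limite Phi =0}, which in your construction is immediate since $F(t)=\phi(t)/(2(1-t))\in C^0([0,1))$.
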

\begin{proof}
Let us first assume $\FailureS=1$. With $\phi$ defined in \eqref{e:la phi della R}, 
let $\hatf_0:[0,1]\to[0,1]$ be for every $t\in [0,1]$
\begin{equation*}
    \hatf^{-1}_0(1-t):=\Psi_0^{-1}\left(\frac{g_0(\infty)}{2}-\frac{1}{2}\int_0^t \frac{\phi(\tau)}{(1-\tau)^{\sfrac12}}\dd \tau\right).
\end{equation*}
Then $\hatf_0\in C^0([0,1])$, $\hatf_0$ is strictly increasing, $\hatf_0(0)=0$ as $R(1)=g_0(\infty)$, $\hatf_0(1)=1$ as $2\Psi_0(1)=g_{0}(\infty)$, 
$\hatf_0\in C^1((0,1))$ with $\hatf_0'>0$ in $(0,1)$, $\hatf_0^{-1}\in W^{1,1}((0,1))$ and for every $t\in (0,1)$
\begin{equation}\label{e:derivative hatf0-1}
  2\big((1-t)\PotDann_0(1-\hatf_0^{-1}(1-t))\big)^{\sfrac12}(\hatf_0^{-1})'(1-t)=\phi(t).
\end{equation}
We observe that setting $\hatf=\hatk:=\hatf_0$ and $Q=\PotDann:=\PotDann_0$, (Hp~$1$) and (Hp~$2$) are satisfied. 
In particular choosing any $\varphi$ that satisfies (Hp~$3$) and (Hp~$4$) with $\varphi'(0^+)=1$, Theorem~\ref{t:finale} 
implies that $\Functeps$ in \eqref{functeps} $\Gamma$-converge to $F_1$ where $g$ is given by 
\begin{equation}
      g(s)=\inf_{(u,v)\in \calulu_s} \int_0^1 \Big(\hatf_0(v)|u'|^2+\PotDann_0(1-v)|v'|^2\Big)^{\sfrac12}\dx.
\end{equation}
Moreover $(\hatk\circ \Psi_0^{-1})^{\sfrac12}$ is convex, because it is non-decreasing and its inverse $\Psi_0(\hatk^{-1}(t^2))$ is concave. Indeed, \eqref{e:derivative hatf0-1} yields that the derivative 
of the latter function is given by
\begin{equation*}
2t\PotDann_0^{\sfrac12}(1-\hatf_0^{-1}(t^2))(\hatf_0^{-1})'(t^2)=\phi(1-t^2)
\end{equation*}
for every $t\in (0,1)$. Hence, the corresponding function $\Phi$ is strictly decreasing, and $\Phi(1^-)=0$ by \eqref{e:limite Phi =0} in Proposition~\ref{p:BCI}. Moreover, 
the condition in \eqref{e:F} is satisfied as $F(t)=\frac{\phi(t)}{2(1-t)}\in C^0([0,1))$, so that $\Phi(1^-)=0$. Hence, Theorem~\ref{t:teo la g esplicita} yields that for every $s\in (0,\sfratt)$
\begin{equation*}
    g(s)=2\int_{\hatf_0^{-1}((g'(s))^2)}^1 \Big(\frac{\hatf_0(t)\PotDann_0(1-t)}{\hatf_0(t)-(g'(s))^2}\Big)^{\sfrac12}\dt\,.
\end{equation*}
Setting $\lambda=1-(g'(s))^2$, and changing variable with $\tau=1-\hatf_0(t)$ we find
\begin{align*}
    g\big((g')^{-1}((1-\lambda)^{\sfrac12})\big)
    =\int_0^\lambda \frac{\phi(t)}{(\lambda-t)^{\sfrac12}} \dt=R(\lambda)=
    g_0\big((g_0')^{-1}((1-\lambda)^{\sfrac12})\big)\,,
\end{align*}
for every $\lambda\in [0,1)$. Arguing as in Theorem~\ref{t:hatf fixed} 
the conclusion $g=g_0$ follows at once.

If $\FailureS\neq 1$, apply the previous argument to $\widetilde{g}_0:=\FailureS^{-1}g_0$ and $\widetilde{\PotDann}_0:=\FailureS^{-2}\PotDann_0$ to find $\widetilde{\hatf_0}$ such that $\widetilde{g}_0$ is the surface energy density of the $\Gamma$-limit of the corresponding functionals $\Functeps$ with $\widetilde{Q}=\widetilde{\PotDann}:=\widetilde{\PotDann}_0$ and
$\widetilde{\hatf}:=\widetilde{\hatf}_0$. The conclusion for $g_0$
then follows by taking $\hatf:=\widetilde{\hatf}_0$, $\PotDann:=\PotDann_0$, and $Q:=\FailureS^{-2}\PotDann_0$.
\end{proof} 

\subsection{The superlinear case}
In this section we show how to reconstruct suitable cohesive laws with superlinear behaviour for small jump amplitudes. Let
\begin{itemize}
\item[(Hp~$6'$)] $g_0\in C^1((0,\infty))\cap C^0([0,\infty))$, $g_0^{-1}(0)=\{0\}$, 
$g_0$ is bounded and non-decreasing; 
\item[(Hp~$7'$)] 
$g'(0^+)=\infty$, $g'_0$ is 
convex on $(0,(\sfratt)_0]$ where 
\[
(\sfratt)_0:=\sup \{s \in [0,\infty):\, g_0(s)<\lim_{s\to\infty}g_0(s)\}\,<\infty\,,
\]
and $g_0\in C^2((0,(\sfratt)_0))$ with $g_0''((\sfratt)_0^-)<0$.
\end{itemize}
In particular, $g_0$ is concave on $[0,\infty)$, and $g'_0$ is strictly decreasing on $(0,(\sfratt)_0]$.
Next, define the auxiliary function $R:[0,\infty)\to(0,g_0(\infty)]$ by
\begin{equation}\label{e:La R grande infinito}
    R(t):= g_0((g_0')^{-1}\big(t^{\sfrac12}))
  \end{equation}
Observe that $R(0^+)=g_0(\infty)$, $R(\infty)=0$, $R$ is strictly decreasing and convex, 
$R\in C^1([0,\infty))$ with  
\begin{equation*}
    R'(t)=
    \begin{cases}
        (2g_0''((g_0')^{-1}(t^{\sfrac12})))^{-1} \; & \textup{if $t\in (0,\infty)$} \\
         (2g_0''((\sfratt)_0^-))^{-1}\; & \textup{if $t=0$}.
    \end{cases}
\end{equation*}
For every $\tau\in [0,\infty)$ then define
\begin{equation}\label{e:la phi della R infinito}
    \phi(\tau):=-\frac1{\pi}\int^\infty_\tau\frac{R'(t)}{(t-\tau)^{\sfrac12}} \dt\,.
\end{equation} 
In the following proposition we establish some basic properties of the function 
$\phi$ as a solution of an Abel equation with datum $R$. Despite being a natural outcome of the theory developed in \cite{Vessella}, we have not been able to find an explicit reference there. Hence, we give a proof below.
\begin{proposition}\label{e:phi superlinear}
Assume (Hp~$6'$) and (Hp~$7'$), and let $\phi$ be defined by \eqref{e:la phi della R infinito}. Then, $\phi\in C^0([0,\infty))$ is strictly positive, non-increasing and 
satisfies the Abel equation
\begin{equation}\label{e:Abel infinito}
    R(t)=\int_{t}^{\infty}\frac{\phi(\tau)}{(\tau-t)^{\sfrac12}}\dd \tau
\end{equation}
for every $t\in [0,\infty)$.
\end{proposition}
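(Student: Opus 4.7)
I plan to argue in three blocks: preliminary facts on $R$, well-posedness plus qualitative properties of $\phi$ (positivity, monotonicity, continuity), and finally verification of the Abel equation via a Fubini exchange.

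First, I would record the structural properties of $R$ that follow from (Hp~$6'$)--(Hp~$7'$). Since $g_0'$ is strictly decreasing on $(0,(\sfratt)_0]$ with $g_0'(0^+)=\infty$, the function $(g_0')^{-1}$ maps $(0,\infty)$ onto $(0,(\sfratt)_0)$. Therefore $R\in C^1([0,\infty))$, strictly decreases from $R(0)=g_0(\infty)$ to $R(\infty)=0$, and
$R'(t)=\bigl(2g_0''((g_0')^{-1}(t^{\sfrac12}))\bigr)^{-1}$ for $t>0$. The convexity of $g_0'$ makes $g_0''$ non-decreasing on $(0,(\sfratt)_0)$, so $g_0''\leq g_0''((\sfratt)_0^-)<0$, which yields $R'(t)\in[R'(0),0)$ and, in particular, $R'<0$ everywhere. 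Moreover, since $R$ is bounded,
\[
\int_0^\infty|R'(t)|\,\dt=R(0)-R(\infty)=g_0(\infty)<\infty.
\]
Finally, the convexity of $R$ (which follows directly from the convexity of $R'$ as a function of $t^{\sfrac12}$ and the chain rule, or more cheaply from the assumption that $g_0'$ is convex and decreasing) shows that $R'$ itself is non-decreasing, so $|R'|$ is non-increasing on $[0,\infty)$.

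Second, I would rewrite $\phi$ in a form that exhibits all the qualitative features at once. The change of variables $t=\tau+u^2$ in \eqref{e:la phi della R infinito} gives
\[
\phi(\tau)=-\frac{2}{\pi}\int_0^\infty R'(\tau+u^2)\,\dd u.
\]
Splitting the domain as $u\in(0,1)$ and $u\geq1$ and using $|R'|\leq|R'(\tau)|$ near $u=0$ together with $\int_0^\infty|R'|\,\dt<\infty$ at infinity proves that the integral converges for every $\tau\geq0$. Positivity of $\phi$ follows since $R'<0$ pointwise, and the monotonicity of $\phi$ is immediate: $R'$ non-decreasing implies that $\tau\mapsto -R'(\tau+u^2)$ is non-increasing for every $u$. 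For continuity I would apply dominated convergence: on any interval $\tau\in[0,b]$ one has $|R'(\tau+u^2)|\leq|R'(u^2)|$ because $|R'|$ is non-increasing, and the dominating function $u\mapsto|R'(u^2)|$ is integrable on $(0,\infty)$ by the estimate
\[
\int_0^\infty|R'(u^2)|\,\dd u=\frac12\int_0^\infty\frac{|R'(s)|}{s^{\sfrac12}}\,\ds\leq \frac{\|R'\|_\infty}{2}\int_0^1 s^{-\sfrac12}\,\ds+\frac12\int_1^\infty|R'(s)|\,\ds<\infty,
\]
so $\phi\in C^0([0,\infty))$.

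Third, I would derive \eqref{e:Abel infinito} by substituting the definition of $\phi$ and exchanging the order of integration. Substituting and invoking Fubini,
\[
\int_t^\infty\frac{\phi(\tau)}{(\tau-t)^{\sfrac12}}\,\dd\tau
=-\frac{1}{\pi}\int_t^\infty R'(s)\left(\int_t^s\frac{\dd\tau}{(\tau-t)^{\sfrac12}(s-\tau)^{\sfrac12}}\right)\ds.
\]
The inner integral equals $\pi$ (classical Beta-function value, seen via $\tau=t+(s-t)\sin^2\theta$), so the right-hand side collapses to $-\int_t^\infty R'(s)\,\ds=R(t)-R(\infty)=R(t)$, which is \eqref{e:Abel infinito}.

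The one step that needs genuine care, and which I view as the main obstacle, is the justification of Fubini: the integrand $|R'(s)|/((\tau-t)(s-\tau))^{\sfrac12}$ has two singular factors and an unbounded domain. I would handle it by computing the double integral with $|R'|$ first, obtaining
\[
\int_t^\infty\!\!\int_\tau^\infty\frac{|R'(s)|}{(\tau-t)^{\sfrac12}(s-\tau)^{\sfrac12}}\,\ds\,\dd\tau
=\pi\int_t^\infty|R'(s)|\,\ds=\pi R(t)<\infty,
\]
by the same Beta-integral identity applied with the roles of the outer and inner variables swapped; this absolute integrability legitimises the exchange and completes the proof.
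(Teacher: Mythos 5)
Your proof is correct and follows essentially the same strategy as the paper's: positivity from $R'<0$, monotonicity from $R'$ non-decreasing, continuity by dominated convergence, and then the Abel equation via Fubini and the Beta-integral identity $\int_{t_1}^{t_2}(\tau-t_1)^{-\sfrac12}(t_2-\tau)^{-\sfrac12}\,\dd\tau=\pi$. The one presentational difference worth noting is your substitution $t=\tau+u^2$, which converts $\phi$ into $-\tfrac{2}{\pi}\int_0^\infty R'(\tau+u^2)\,\dd u$ and thereby makes well-posedness, positivity, monotonicity, and continuity all visible from a single formula with the fixed dominating function $u\mapsto|R'(u^2)|$; the paper instead proves monotonicity by translating the domain of integration and handles continuity by splitting the integral at $t_j+\eps$, estimating the singular piece by $2|R'(t_j)|\eps^{\sfrac12}$, and sending $\eps\to0$ after the limit in $j$. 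You also state the Tonelli computation that legitimises the Fubini exchange explicitly, which the paper leaves implicit. Both routes are sound; yours is marginally more compact.
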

\begin{proof}
First we observe that $\phi$ is well defined and strictly positive being 
$R'(t)<0$ for every $t\in [0,\infty)$. Moreover, by the change of variable 
$\tau=(g_0')^{-1}(t^{\sfrac12})$
\begin{align*}
\phi(0)=-\int_0^{\infty}\frac{R'(t)}{t^{\sfrac12}}\dt = -\int_0^{\infty}\frac{1}{2g_0''((g_0')^{-1}(t^{\sfrac12}))t^{\sfrac12}}\dt
=(\sfratt)_0<\infty\,.
\end{align*}
For every $\tau_1,\tau_2\in [0,\infty)$ with $\tau_1 <\tau_2$ we have that 
\begin{align*}
    \phi(\tau_1)&=-\frac1{\pi}\int^\infty_{\tau_1}\frac{R'(t)}{(t-\tau_1)^{\sfrac12}} \dt
    =-\frac1{\pi}\int^\infty_{\tau_2}\frac{R'(t+\tau_1-\tau_2)}{(t-\tau_2)^{\sfrac12}} \dt\\&
    \geq -\frac1{\pi}\int^\infty_{\tau_2}\frac{R'(t)}{(t-\tau_2)^{\sfrac12}} \dt=\phi(\tau_2),
\end{align*}
being $R'$ non-decreasing. Therefore, $\phi(\tau)\in (0,\infty)$ for every $\tau\in [0,\infty)$. 

To prove the continuity of $\phi$ fix $t_0\in [0,\infty)$ and $(t_j)_j$ such that $t_j\to t_0$ as $j\to \infty$. For every $\eps\in (0,1)$ we have that 
\begin{equation*}
    \phi(t_j)=\int_{t_j}^{t_j+\eps}\frac{R'(t)}{(t-t_j)^{\sfrac12}} \dt
    + \int_{t_j+\eps}^{\infty}\frac{R'(t)}{(t-t_j)^{\sfrac12}} \dt.
\end{equation*}
By Lebesgue Dominated Convergence Theorem we have that 
\begin{equation*}
    \lim_{j\to \infty} \int_{t_j+\eps}^{\infty}\frac{R'(t)}{(t-t_j)^{\sfrac12}} \dt= \int_{t_0+\eps}^{\infty}\frac{R'(t)}{(t-t_0)^{\sfrac12}} \dt,
\end{equation*}
while 
\begin{equation*}
\left|\int_{t_j}^{t_j+\eps}\frac{R'(t)}{(t-t_j)^{\sfrac12}}\dt\right| \leq 2|R'(t_j)|\eps^{\sfrac12}.
\end{equation*}
Thus
\begin{equation*}
    \limsup_{j\to \infty}|\phi(t_j)-\phi(t_0)|\leq 4|R'(t_0)|\eps^{\sfrac12}.
\end{equation*}
Taking $\eps\to 0$ we infer the continuity of $\phi$.

To obtain \eqref{e:Abel infinito}, we use Fubini's Theorem and the elementary identity
\begin{equation*}
   \int_{t_1}^{t_2} \frac{1}{(\tau-t_1)^{\sfrac12}(t_2-\tau)^{\sfrac12}}\dd \tau =\pi
\end{equation*}
for every $t_1,\,t_2\in \R$ with $t_1<t_2$. Indeed
\begin{align*}
    \int_{t}^{\infty}\frac{\phi(\tau)}{(\tau-t)^{\sfrac12}}\dd \tau&=-\frac{1}{\pi}\int_t^{\infty}\frac{1}{(\tau-t)^{\sfrac12}}\int_{\tau}^{\infty}\frac{R'(r)}{(r-\tau)^{\sfrac12}}\dd r= \\ & -\frac{1}{\pi}\int_{t}^{\infty}R'(r)\int_t^{r}\frac{1}{(\tau-t)^{\sfrac12}(r-\tau)^{\sfrac12}} \dd \tau \dd r=R(t)
\end{align*}
because $R(\tau)\to 0$ as $\tau\to \infty$.
\end{proof}

Next, we fix $\PotDann$ and determine both $\hatf$ and $Q$, and thus $\hatk$.
\begin{theorem}\label{t:potdann fixed infinito}
Let $g_0$ satisfy (Hp~$6$')-(Hp~$7$'), and let $\PotDann_0\in C^0([0,1],[0,\infty))$,
$\PotDann_0^{-1}(\{0\})=\{0\}$, $2\Psi_0(1)=g_0(\infty)$, 
$\Psi_0$ defined as in \eqref{e:Psi} integrating $\PotDann_0$.

Then, there exist $\hatf_0$ and $Q_0\in C^0([0,1])$ such that $\hatf:=\hatf_0$, $Q:=Q_0$, 
$\PotDann:=\PotDann_0$ satisfy (Hp~$1$), (Hp~$2$) with $\FailureS=\infty$, 
$\hatk:=\FailureS^2\hatf$ satisfies (Hp~$5'$),
and for any $\varphi$ satisfying (Hp~$3$)-(Hp~$4$) with $\varphi'(0^+)=1$ the functionals $\Functeps$ in \eqref{functeps} 
$\Gamma$-converge to $F_{\infty}$ in \eqref{F0} with $g=g_0$.
\end{theorem}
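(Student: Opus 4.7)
The plan is to mirror the strategy of Theorem~\ref{t:potdann fixed} in the linear case, adapted to the superlinear regime $\FailureS=\infty$. The novelty is that the requirement $\hatk(1^-)=\infty$, combined with (Hp~$2$), forces $\PotDann_0/Q_0\to\infty$ at $0$, so we must construct \emph{both} $\hatf_0$ and $Q_0$ rather than a single function. First I would invoke Proposition~\ref{e:phi superlinear} to obtain $\phi\in C^0([0,\infty))$, strictly positive and non-increasing, satisfying the Abel equation with datum $R$ from \eqref{e:La R grande infinito}. I then define $\hatk:[0,1)\to[0,\infty)$ implicitly via
\[
\hatk^{-1}(\lambda):=\Psi_0^{-1}\!\left(\tfrac12\int_0^\lambda \phi(\tau)\tau^{-\sfrac12}\,\dd\tau\right),\qquad \lambda\in[0,\infty).
\]
Evaluating the Abel equation at $t=0$ yields $\int_0^\infty \phi(\tau)\tau^{-\sfrac12}\,\dd\tau=R(0)=g_0(\infty)=2\Psi_0(1)$, so $\hatk^{-1}(\infty)=1$. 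The $C^1$ regularity and strict monotonicity of $\Psi_0^{-1}$ and of $\lambda\mapsto\tfrac12\int_0^\lambda \phi/\tau^{\sfrac12}$ then give $\hatk\in C^1((0,1))$ strictly increasing with $\hatk(0)=0$, $\hatk(1^-)=\infty$, $\hatk'>0$ on $(0,1)$, verifying (Hp~$5'$).

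Next I would decompose $\hatk$ by setting
\[
\hatf_0(t):=\frac{\hatk(t)}{1+\hatk(t)}, \qquad Q_0(s):=\frac{\PotDann_0(s)}{1+\hatk(1-s)},
\]
which extend continuously to $[0,1]$ with $\hatf_0(1)=1$, $\hatf_0^{-1}(0)=Q_0^{-1}(0)=\{0\}$, and satisfy the consistency $\PotDann_0(1-t)\hatf_0(t)/Q_0(1-t)=\hatk(t)$ on $[0,1)$. Moreover $\PotDann_0(t)/Q_0(t)=1+\hatk(1-t)\to\infty$ as $t\to 0^+$, giving (Hp~$2$) with $\FailureS=\infty$. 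The monotonicity statements in (Hp~$1$) would be obtained by expanding $\hatk(t)\sim ct^2$ with $c=\PotDann_0(1)/\phi(0)^2$ (Taylor-expanding the defining identity for $\hatk^{-1}$ near the origin), so that $f^2(t)=\hatk(t)/\PotDann_0(1-t)$ is locally quadratic and dominates the continuity correction of $\PotDann_0$; the factor $1/(1+\hatk(1-s))$ is strictly increasing in $s$, which similarly controls $Q_0$. With (Hp~$1$)-(Hp~$2$) and (Hp~$5'$) in place, Theorem~\ref{t:finale} yields $\Gamma$-convergence of $\Functeps$ to $F_\infty$ with surface energy density $g$ given by \eqref{e:definizione g assegnata}.

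To identify $g$ with $g_0$, set $\zeta(t):=\Psi_0(\hatk^{-1}(t^2))$; by construction $\zeta'(t)=\phi(t^2)$ is non-increasing, so $\zeta$ is concave and $(\hatk\circ\Psi_0^{-1})^{\sfrac12}=\zeta^{-1}$ is convex. Proposition~\ref{p:BCI}(v) then gives $\Phi$ strictly decreasing, while $\phi(\infty)=0$ (forced by $R(\infty)=0$) yields $\Phi(1^-)=0$. Theorem~\ref{t:teo la g esplicita} applies and, writing $m_s=\Phi^{-1}(s)$ and $g'(s)=\hatk^{\sfrac12}(m_s)$, the change of variable $\tau=\hatk(t)$ in the explicit formula for $g(s)$, together with the identity $(\Psi_0\circ\hatk^{-1})'(\tau)=\phi(\tau)/(2\tau^{\sfrac12})$ inherent in the construction, reduces it to
\[
g(s)=\int_{\hatk(m_s)}^\infty \phi(\tau)(\tau-\hatk(m_s))^{-\sfrac12}\,\dd\tau=R(\hatk(m_s))=g_0\big((g_0')^{-1}(g'(s))\big),
\]
so $g\circ(g')^{-1}=g_0\circ(g_0')^{-1}$ on $(0,\infty)$. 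Differentiating gives $((g')^{-1})'=((g_0')^{-1})'$, and matching the common limit $0$ as $u\to\infty$ forces $(g')^{-1}=(g_0')^{-1}$, whence $\sfratt=(\sfratt)_0$ and $g=g_0$ (using $g(0)=g_0(0)=0$). The hard part is the rigorous Abel-type reduction above over the unbounded interval $[\hatk(m_s),\infty)$, which requires careful justification of integrability at both endpoints; verifying the delicate monotonicity conditions in (Hp~$1$) for the chosen decomposition, given only continuity of $\PotDann_0$, is a related subtle point.
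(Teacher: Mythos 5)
Your proof is correct and follows essentially the same strategy as the paper's: define $\hatk^{-1}$ by Abel inversion of $R$ via $\phi$, verify (Hp~$5'$) and the convexity of $(\hatk\circ\Psi_0^{-1})^{\sfrac12}$ through the identity $(\Psi_0\circ\hatk^{-1})'(\tau)=\phi(\tau)/(2\tau^{\sfrac12})$, invoke Theorem~\ref{t:teo la g esplicita}, and recover $g=g_0$ via the change of variable $\tau=\hatk(t)$ and the Abel equation. The one cosmetic difference is the split of $\hatk$ into $\hatf$ and $Q$: you take $\hatf_0=\hatk/(1+\hatk)$ and $Q_0=\PotDann_0/(1+\hatk(1-\cdot))$, while the paper truncates, $\hatf=\hatk\wedge1$ and $Q=\PotDann_0\,\hatf(1-\cdot)/\hatk(1-\cdot)$; since the $\Gamma$-limit depends only on $\hatk$ and $\PotDann_0$, and both choices give the same $f^2(t)=\hatk(t)/\PotDann_0(1-t)$, the two are interchangeable. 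The mild monotonicity issue for (Hp~$1$) near the origin that you flag at the end (with $\PotDann_0$ only continuous) is genuine but is also present, unaddressed, in the paper's proof.
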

\begin{proof}
Let $\phi$ be defined in \eqref{e:la phi della R infinito}, then let $\hatk^{-1}_0:[0,\infty)\to[0,1]$ be
\begin{equation*}
    \hatk^{-1}_0(t):=\Psi_0^{-1}\left(\frac{g_0(\infty)}{2}-\frac{1}{2}\int_t^{\infty} \frac{\phi(\tau)}{\tau^{\sfrac12}}\dd \tau\right)
\end{equation*}
Then $\hatk_0 \in C^0([0,1))\cap C^1((0,1))$ by Proposition~\ref{e:phi superlinear}. Moreover,
$\hatk_0(0)=0$ as $R(0)=g_0(\infty)$, $\hatk_0(t)\to \infty$ as $t\to 1$ as $2\Psi_0(1)=g_0(\infty)$, $\hatk_0'(t)>0$ for every $t\in (0,1)$ and 
\begin{equation*}
  2(\hatk_0^{-1})'(t)\big(t\PotDann_0(1-\hatk_0^{-1}(t))\big)^{\sfrac12}=\phi(t)
\end{equation*}
for every $t\in (0,1)$.
We observe that setting $\hatf(t):=\hatk_0(t)\wedge 1$, $Q(0):=0$,
$Q(t):=\frac{\PotDann_0(t)\hatf(1-t)}{\hatk_0(1-t)}$ if $t\in (0,1]$
and $\PotDann:=\PotDann_0$, (Hp~$1$) and (Hp~$2$) are satisfied. In particular, 
for every $\varphi$ as in the statement, Theorem~\ref{t:finale} implies that 
$\Functeps$ in \eqref{functeps} $\Gamma$-converge to $F_{\infty}$ where $g$ is given by 
\begin{equation}
      g(s)=\inf_{(u,v)\in \calulu_s} \int_0^1 \Big(\hatk_0(v)|u'|^2+\PotDann_0(1-v)|v'|^2\Big)^{\sfrac12}\dx.
\end{equation}
Moreover $(\hatk_0\circ \Psi^{-1})^{\sfrac12}$ is convex, because it is non-decreasing 
and its inverse $\Psi(\hatk_0^{-1}(t^2))$ is concave. Indeed, the derivative of the 
latter function is 
\begin{equation*}
   (\PotDann_0(1-\hatk_0^{-1}(t^2)))^{\sfrac12}(\hatk_0^{-1})'(t^2)\,2t=\phi(t^2).
\end{equation*}
Therefore, by Theorem~\ref{t:teo la g esplicita} we infer that
\begin{equation*}
    g(s)=2\int_{\hatk_0^{-1}((g'(s))^2)}^1 \Big(\frac{\hatk_0(t)\PotDann_0(1-t)}{\hatk_0(t)-(g'(s))^2}\Big)^{\sfrac12}\dt\,
\end{equation*}
and changing variable with $\hatk_0(t)=\tau$ and setting 
$\lambda=(g'(s))^2$, we find
\begin{align*}
& g\big((g')^{-1}(\lambda^{\sfrac12})\big)=2\int^{\infty}_\lambda
\Big(\frac{\tau\,\PotDann_0(1-\hatk_0^{-1}(\tau))}{\tau-\lambda}\Big)^{\sfrac12}(\hatk_0^{-1})'(\tau)\dd\tau \\
& =\int_\lambda^\infty \frac{\phi(\tau)}{(\tau-\lambda)^{\sfrac12}} \dd\tau=R(\lambda)=
    g_0\big((g_0')^{-1}(\lambda^{\sfrac12})\big)\,,
\end{align*}
for every $\lambda\in [0,\infty)$. Arguing as in Theorem~\ref{t:hatf fixed} the conclusion $g=g_0$ follows at once.
\end{proof}
Finally, with fixed $\hatk$ we determine $\hatf$, $Q$ and $\PotDann$.
\begin{theorem}\label{t:hatf fixed infinito}
Assume that $g_0$ satisfy (Hp~$6'$)-(Hp~$7'$), and $\hatk_0$ is such that $\hatk_0(0)=0$,
$\hatk_0(1^-)=\infty$, $\hatk_0^{\sfrac12}\in C^1([0,1))$ with $\hatk_0'(t)>0$ for every 
$t\in(0,1)$, concave in a left neighborhood of $t=1$, and such that
$(\hatk_0^{\sfrac12}(t))'\phi(\hatk_0(t))$ is decreasing and infinitesimal as $t\to1$, 
where $\phi$ is defined in \eqref{e:la phi della R infinito} in particular
$\hatk_0$ satisfies (Hp~$5'$).

Then there exist $\PotDann_0$ and $Q_0\in C^0([0,1])$ such that $\hatf(t):=\hatk_0(t)\wedge 1$, 
$Q:=Q_0$, $\PotDann:=\PotDann_0$ satisfy (Hp~$1$)-(Hp~$2$) with $\FailureS=\infty$; and for any 
$\varphi$ satisfying (Hp~$3$)-(Hp~$4$) with $\varphi'(0^+)=1$ the functionals $\Functeps$ in 
\eqref{functeps} $\Gamma$-converge to $F_{\infty}$ in \eqref{F0} with $g=g_0$.
\end{theorem}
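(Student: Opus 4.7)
The plan is to mirror the construction of Theorem~\ref{t:hatf fixed} for the choice of $\PotDann_0$, combined with the assignment of $Q_0$ from the proof of Theorem~\ref{t:potdann fixed infinito}. I define
\begin{equation*}
    \PotDann_0(1-t) := \bigl((\hatk_0^{\sfrac12})'(t)\,\phi(\hatk_0(t))\bigr)^2, \quad t\in[0,1),
\end{equation*}
with $\PotDann_0(0):=0$, continuity at $t=1$ being ensured by the infinitesimal hypothesis on $(\hatk_0^{\sfrac12})'(t)\phi(\hatk_0(t))$; I then set $\hatf(t):=\hatk_0(t)\wedge 1$, $Q_0(0):=0$, and $Q_0(t):=\PotDann_0(t)\hatf(1-t)/\hatk_0(1-t)$ for $t\in(0,1]$. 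With this assignment the composite $\hatk(t)=\PotDann(1-t)\hatf(t)/Q(1-t)$ in (Hp~$5$) coincides with $\hatk_0$, so (Hp~$5'$) is transferred from the hypothesis on $\hatk_0$. The decreasing assumption on $(\hatk_0^{\sfrac12})'\phi(\hatk_0)$ makes $\PotDann_0(1-\cdot)$ decreasing on $[0,1)$, and I verify directly that both $f(t)^2=\hatk_0(t)/\PotDann_0(1-t)$ and $Q_0(t)$ in a right neighbourhood of the origin are increasing, hence (Hp~$1$) holds. Moreover $\lim_{t\to 0^+}(\PotDann_0(t)/Q_0(t))^{\sfrac12}=\hatk_0^{\sfrac12}(1^-)=\infty$, so (Hp~$2$) is satisfied with $\FailureS=\infty$.

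With (Hp~$1$)--(Hp~$4$) and (Hp~$5'$) in force, Theorem~\ref{t:finale} delivers the $\Gamma$-convergence to $F_\infty$ with surface density
\begin{equation*}
    g(s)=\inf_{(u,v)\in\calulu_s}\int_0^1\bigl(\hatk_0(v)|u'|^2+\PotDann_0(1-v)|v'|^2\bigr)^{\sfrac12}\dx.
\end{equation*}
To identify $g$ via Theorem~\ref{t:teo la g esplicita} I show that $\Phi$ in \eqref{e:la psi piccola} is strictly decreasing: the chain rule combined with the definition of $\PotDann_0$ gives
\begin{equation*}
    \frac{d}{dt}\Psi_0(\hatk_0^{-1}(t^2))=2t\,\PotDann_0^{\sfrac12}(1-\hatk_0^{-1}(t^2))\,(\hatk_0^{-1})'(t^2)=\phi(t^2),
\end{equation*}
so $\Psi_0\circ\hatk_0^{-1}(t^2)$ is concave (since $\phi$ is non-increasing by Proposition~\ref{e:phi superlinear}), its inverse $(\hatk_0\circ\Psi_0^{-1})^{\sfrac12}$ is convex, and item~(v) of Proposition~\ref{p:BCI} delivers strict monotonicity of $\Phi$. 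The boundary relation $\inf\Phi=\Phi(1^-)=0$ comes for free from Proposition~\ref{l:Gs reduction}(i), as $\hatk_0(1^-)=\infty$ automatically forces $\inf\Phi=0$.

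Finally, for $s\in(0,\sfratt)$ I combine items (ii)--(iii) of Theorem~\ref{t:teo la g esplicita} with the change of variable $r=\hatk_0(t)$ in the representation of $g$: setting $\lambda:=(g'(s))^2$ and using $\PotDann_0^{\sfrac12}(1-t)=(\hatk_0^{\sfrac12})'(t)\phi(\hatk_0(t))$ the integrand collapses and yields
\begin{equation*}
    g\bigl((g')^{-1}(\lambda^{\sfrac12})\bigr)=\int_\lambda^\infty\frac{\phi(r)}{(r-\lambda)^{\sfrac12}}\dd r=R(\lambda)=g_0\bigl((g_0')^{-1}(\lambda^{\sfrac12})\bigr),
\end{equation*}
by the Abel identity \eqref{e:Abel infinito}. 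Inverting this relation gives $(g^{-1})'=(g_0^{-1})'$ on the common range; since $g^{-1}(0)=g_0^{-1}(0)=0$ and the ranges match because $g'(0^+)=g_0'(0^+)=\infty$ and $g'(\sfratt^-)=g_0'((\sfratt)_0^-)=0$, I conclude $g=g_0$ globally on $[0,\infty)$ exactly as in the final step of Theorem~\ref{t:hatf fixed}. The main technical obstacle is the verification of (Hp~$1$) near the origin: the monotonicity of $f$ and of $Q_0$ rests crucially on the \emph{global} decreasing character of $(\hatk_0^{\sfrac12})'\phi(\hatk_0)$, which is precisely why such a monotonicity assumption is built into the hypotheses of the theorem; a secondary delicate point is that item (iii) of Proposition~\ref{p:BCI} cannot be invoked to control $\Phi$ at the right endpoint, but this is circumvented by appealing to Proposition~\ref{l:Gs reduction}(i) as above.
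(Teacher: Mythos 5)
Your proposal is correct and follows essentially the same route as the paper: you define the same $\PotDann_0(1-t)=\big((\hatk_0^{\sfrac12})'(t)\phi(\hatk_0(t))\big)^2$ and the same $Q_0$, verify (Hp~$1$)--(Hp~$2$) with $\FailureS=\infty$, and then identify $g=g_0$ exactly as in Theorem~\ref{t:potdann fixed infinito}, i.e.\ via the concavity of $\Psi_0\circ\hatk_0^{-1}(t^2)$ (derivative $\phi(t^2)$ with $\phi$ non-increasing), Proposition~\ref{p:BCI}(v), Theorem~\ref{t:teo la g esplicita}, and the Abel identity \eqref{e:Abel infinito}. Your extra remarks (continuity of $\PotDann_0$ at $t=1$ from the infinitesimal hypothesis, $\inf\Phi=0$ from Proposition~\ref{l:Gs reduction}(i) since $\hatk_0(1^-)=\infty$) are consistent with, and slightly more explicit than, the paper's condensed verification.
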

\begin{proof}
Let $\phi$ be defined in \eqref{e:la phi della R infinito}, and then set for every $t\in [0,1)$
\begin{equation}\label{e:PotDann zero infinito}
    \PotDann_0(1-t):=\big((\hatk_0^{\sfrac12}(t))'\phi(\hatk_0(t))\big)^2\,,
\end{equation}
and $\PotDann_0(0):=0$. Note that $\PotDann_0\in C^0([0,1])$ by the assumptions on $\hatk_0$.
With the choices $\hatf(t):=\hatk_0(t)\wedge 1$, $Q(0):=0$ and $Q(1-t):=\frac{\PotDann_0(1-t)\hatf(t)}{\hatk_0(t)}
$ for $t\in[0,1)$, (Hp~$1$) and (Hp~$2$) are satisfied as $\phi$ is strictly increasing and $\hatk_0^{\sfrac12}$ is concave in a left neighborhood of $t=1$.
Moreover, \eqref{e:FailureS def} holds with $\FailureS=\infty$. 
In particular, choosing any $\varphi$ that satisfies (Hp~$3$) and (Hp~$4$) with $\varphi'(0^+)=1$, 
by Theorem~\ref{t:finale} $(\Functeps)_\eps$ $\Gamma$-converges to $F_\infty$, where the surface energy 
density $g$ is given by 
\begin{equation*}
      g(s):=\inf_{(u,v)\in \calulu_s} \int_0^1 \Big(\hatk_0(v)|u'|^2+\PotDann_0(1-v)|v'|^2\Big)^{\sfrac12}\dx.
\end{equation*}
The rest of the proof follows exactly as in Theorem~\ref{t:potdann fixed infinito}.
\end{proof}

\subsection{Examples}\label{ss:esempi g esplicite}
In this section, we apply Theorems~\ref{t:teo la g esplicita Dugdale}, \ref{t:hatf fixed}, \ref{t:potdann fixed} and \ref{t:potdann fixed infinito} to find closed form solutions for phase-field models whose $\Gamma$-limit have as surface energy density assigned cohesive laws $g$ frequently used in applications,~\Fig{fig_Ex}. From a numerical point of view the problem has already been discussed extensively in the papers \cite{Wu2017,Wu2018b,Feng2021,Lammen2025,Wu2025}.
We note that the reconstruction problem for the Dugdale cohesive law has been addressed in \cite[Section~7.1]{Conti2016} and \cite{Lammen2025} using a truncated degradation function of the form $\varphi(t)=1\wedge t$, as discussed in \cite[formula (1.2)]{Alessi2025a}.
In contrast, we obtain here Dugdale cohesive law by using the smooth degradation function $\varphi(t)=\frac t{1+t}$, as shown in \cite[formula (1.3)]{Alessi2025a}. Consequently, in a one-dimensional setup, the mechanical global response corresponding to the Dugdale cohesive law with the truncated degradation function is obtained only in the limit as $\eps\to 0$, since the critical stress also depends on the regularizing length $\eps$ \cite[Fig. 2]{Lammen2025}. By employing the smooth degradation function, as done in this work, the regularizing length $\eps$ only influences the phase-field profile, ensuring that the mechanical global response is immediately and correctly described. Further details about this point can be found in the application-oriented part of this work,~\cite{Alessi2025c}.

We will use the notation of section~\ref{s:sezione 5.3}.
Recall that we have assumed (Hp~$1$)-(Hp~$4$), and moreover (Hp~$5'$). To be consistent with section~\ref{s:sezione 5.3},
we suppose that the function $\varphi$ satisfies $\varphi'(0^+)=1$. For the sake of simplicity,
if $\hatk(1^-)<\infty$ we shall choose $Q=\PotDann$ so that $g'(0^+)=1$. This is clearly not a limitation
using the rescaling argument employed in the proofs of Theorems~\ref{t:hatf fixed} and \ref{t:potdann fixed}.

\newcommand{\myfig}[1]{\includegraphics[page=#1,scale=0.8,trim=3mm 0 0 1mm, clip]{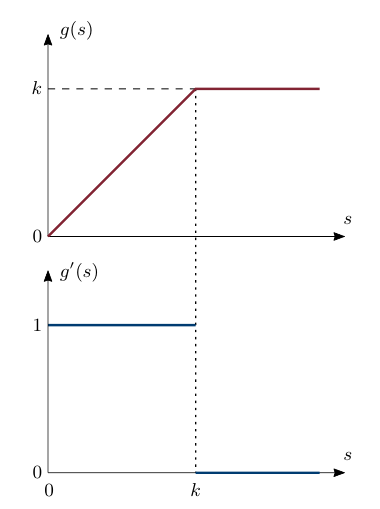}}
\newcommand{\mysubref}[1]{(\subref{#1})}

\begin{figure}[h!]
  \centering
  \begin{subfigure}{0.32\linewidth}
    \centering
      \myfig{1}
    \caption{} \label{fig_E_D} 
  \end{subfigure}
  \hfill
  \begin{subfigure}{0.32\linewidth}
    \centering
      \myfig{2}
    \caption{} \label{fig_E_L} 
  \end{subfigure}
  \hfill
  \begin{subfigure}{0.32\linewidth}
    \centering
      \myfig{3}
    \caption{} \label{fig_E_L2} 
  \end{subfigure}
  \\[1ex]
  \begin{subfigure}{0.32\linewidth}
    \centering
      \myfig{4}
    \caption{} \label{fig_E_H} 
  \end{subfigure}
  \qquad
  \begin{subfigure}{0.32\linewidth}
    \centering
      \myfig{5}
    \caption{} \label{fig_E_H2} 
  \end{subfigure}  
  \\[1ex]
  \begin{subfigure}{0.32\linewidth}
    \centering
      \myfig{6}
    \caption{} \label{fig_E_E} 
  \end{subfigure}
  \qquad
  \begin{subfigure}{0.32\linewidth}
    \centering
      \myfig{7}
    \caption{} \label{fig_E_Log} 
  \end{subfigure}  
\caption{Cohesive laws considered in the examples of section~\ref{ss:esempi g esplicite}: \mysubref{fig_E_D} Dugdale, \mysubref{fig_E_L} Linear, \mysubref{fig_E_L2} Bilinear, \mysubref{fig_E_H} hyperbolic, \mysubref{fig_E_H2} quadratic hyperbolic, \mysubref{fig_E_E} exponential and \mysubref{fig_E_Log}~logarithmic cohesive softening laws.}
\label{fig_Ex} 
\end{figure}

\FloatBarrier

\subsubsection{Dugdale's model}
Consider the Dugdale's model given by the cohesive law 
\begin{equation}
    g(s)=
    \begin{cases}
        s \; \textup{ if $s\in[0,k]$} \\
        k \; \textup{ if $s\in(k,\infty)$}
    \end{cases}
\end{equation}
where $k\in (0,\infty)$. We observe that if $\hatf$ and $\PotDann$ are such that $\hatf\in C^1((0,1))$ with $\hatf'(t)>0$ for every $t\in (0,1)$ and
\begin{equation*}
   \Psi(\hatf^{-1}(\tau^2))=\frac{k}{\pi}(\sin^{-1}(\tau)-\tau(1-\tau^2)^{\sfrac12})\,
\end{equation*}
for every $\tau\in [0,1]$, then an easy computation gives $\Phi(x)=2k\hatf^{\sfrac12}(x)$ for every 
$x\in (0,1)$, where $\Psi$ and $\Phi$ are respectively defined in \eqref{e:Psi} and \eqref{e:la psi piccola}. Therefore, if $\Functeps$ are the functionals defined in \eqref{functeps} with $Q=\omega$, then
Theorem~\ref{t:teo la g esplicita Dugdale} implies that ${\displaystyle{\Gamma\textup{-} \lim_{\eps \to 0}\Functeps}}$ is a functional as 
in \eqref{F0} with $g$ as surface energy density. In particular we obtain the following
\begin{itemize}
    \item[$\cdot$] if $\omega(1-t)=k^2(1-t)^2$ then $\hatf^{-1}(t)=1-\left[1-\frac{2}{\pi}\left(\sin^{-1}(t^{\frac{1}{2}})-(t-t^2)^{\frac{1}{2}}\right)\right]^{\frac{1}{2}}$
    \item[$\cdot$] if $\omega(1-t)=\frac{9k^2}{16}(1-t)$ then $\hatf^{-1}(t)=1-\left[1-\frac{2}{\pi}\left(\sin^{-1}(t^{\frac{1}{2}})-(t-t^2)^{\frac{1}{2}}\right)\right]^{\frac{2}{3}}$
\end{itemize}

\subsubsection{Linear softening}
Let $k\in (0,\infty)$, and consider softening laws of the form
 \begin{equation*}
        g'(s)=
        \begin{cases}
            1-ks \; & \textup{ if $s\in[0,\frac{1}{k}]$} \\
            0 \; & \textup{ if $s\in(\frac{1}{k},\infty)$}
        \end{cases}
    \end{equation*}
One can easily verify that the corresponding $g$ satisfies (Hp~$6$)-(Hp~$7$), $R(t)=\frac{t}{2k}$ fulfills (Hp~$8$), and that $\phi(t)=\frac{t^{\sfrac12}}{k\pi}$ by \eqref{e:la phi della R}. In particular, we deduce that
\begin{itemize}
        \item[$\cdot$]  if $\hatf(t)=t^2$ then $\omega(1-t)=\frac{1-t^2}{k^2\pi^2}$,
        \item[$\cdot$] if $\omega(t)=\frac{t^2}{4k^2}$ then $\hatf^{-1}(t)=1-\big(1-\frac{2}{\pi}((t-t^2)^{\sfrac12}+\cos^{-1}((1-t)^{\sfrac12}))\big)^{\sfrac12}$,
        \item[$\cdot$] if $\omega(t)=\frac{9t}{64k^2}$ then $\hatf^{-1}(t)=1-\big(1-\frac{2}{\pi}((t-t^2)^{\sfrac12}+\cos^{-1}((1-t)^{\sfrac12})\big)^{\sfrac{2}{3}}$.
    \end{itemize}
    We remark that in case $\hatf(t)=t^2$ we obtain exactly the same damage potential $\PotDann$ as in \cite[Section 4.1]{Feng2021}
\subsubsection{Bilinear softening}
In this case we fix $a,b,k_1,k_2 \in (0,\infty)$ such that $a<b$, $k_2<k_1$ and 
 \begin{equation*}
        g'(s)=
        \begin{cases}
            1-k_1s \; & \textup{ if $s\in[0,a]$} \\
            (1-k_1a+k_2a)-k_2s \; & \textup{ if $s\in(a,b]$} \\
            0 \; & \textup{ if $s\in(b,\infty)$}
        \end{cases}
  \end{equation*}
    is continuous. Therefore, the corresponding $g$ satisfies (Hp~$6$)-(Hp~$7$) and simple calculations 
    yield $R\in W^{1,\infty}((0,1))$ with
    \begin{equation*}
       R'(t)=
       \begin{cases}
           \frac{1}{2k_1} \; & \textup{if $t\in[0,1-(1-k_1 a)^2]$} \\
           \frac{1}{2k_2} \; & \textup{ if $t\in(1-(1-k_1 a)^2,1]$}.
       \end{cases}
    \end{equation*}
In particular, $R$ fulfills (Hp~$8$), and if $\hatf(t)=t^2$ then
\begin{equation*}
            \omega(1-t)=
            \begin{cases}
              \left(\frac{(1-t^2)^{\sfrac12}}{k_1\pi}+(\frac{1}{k_2\pi}-\frac{1}{k_1\pi})((1-k_1a)^2-t^2)^{\sfrac12}\right)^2 & \textup{ if $t\in[0,1-k_1a]$} \\
                  \frac{1-t^2}{k_1^2\pi^2} & \textup{ if $t\in(1-k_1a,1]$}.\\
            \end{cases}
        \end{equation*}

\subsubsection{Hyperbolic softening}
Let $k\in (0,\infty)$ and consider hyperbolic softening laws of the form
\begin{equation*}
        g'(s)=
        \begin{cases}
            \frac{2}{1+ks}-1 \; & \textup{if $s\in[0,\frac{1}{k}]$} \\
            0 \; & \textup{if $s\in(\frac{1}{k},\infty)$}.
        \end{cases}
    \end{equation*}
Then 
\begin{equation*}
        g(s)=
        \begin{cases}
            \frac{2}{k}\log(1+ks)-s \; & \textup{if $s\in[0,\frac{1}{k}]$} \\
            \frac{1}{k}(2\log(2)-1) \; & \textup{if $s\in(\frac{1}{k},\infty)$}
        \end{cases}
    \end{equation*}
satisfies (Hp~$6$)-(Hp~$7$) and the corresponding function $R$ fulfills (Hp~$8$) with 
\begin{equation*}
   R'(t)=\frac{1}{k(1+(1-t)^{\frac{1}{2}})^2} 
\end{equation*}
for every $t\in [0,1]$. In particular, by an easy computation and Theorem~\ref{t:hatf fixed}, we obtain, with $\hatf(t)=t^2$, that
\begin{equation*}
    \omega(1-t)=\begin{cases}
    \frac{(2t^2\log(t)+1-t^2)^2}{k^2\pi^2(1-t^2)^3} &  t\in [0,1) \\
    0 & t=1
    \end{cases}.
\end{equation*}

\subsubsection{Quadratic hyperbolic softening}
Let $k\in (0,\infty)$ and consider quadratic hyperbolic softening laws of the form $g'(s)= \frac{1}{(1+ks)^2}$.
Then $g$ satisfies (Hp~$6$)-(Hp~$7$) with
\begin{equation}
    R'(t)=\frac{4k}{(1-t)^{\frac{1}{4}}}
\end{equation}
for every $t\in [0,1)$. In particular $R'\in L^p((0,1))$ for every $p\in [1,4)$ and thus $R$ fulfills (Hp~$8$). From a simple computation and Theorem \ref{t:hatf fixed} we can infer that if $\hatf(t)=t^2$ then $\omega(1-t)=\phi(1-t^2)^2$ where
\begin{equation}
   \phi(t)= \frac{16k}{3\pi(1-t)^{\frac{1}{2}}}\left[(1-t)^{\frac{3}{4}} {}_{2}F_1(1/2,3/4;7/4;1)-{}_2F_1(1/2,3/4;7/4;1/(1-t))\right]
\end{equation}
and ${}_2F_1(\cdot \; ,\cdot \;;\cdot \;;\cdot)$ is the Hypergeometric function.

\subsubsection{Exponential softening}\label{s:exponential example}
Let $k\in (0,\infty)$ and consider exponential softening laws of the form $g'(s)=e^{-ks}$, namely $g(s)=\frac{1}{k}(1-e^{-ks})$
for $s\geq 0$. It is easy to verify that $g$ fulfills (Hp~$6$)-(Hp~$7$) with $R(t)=\frac{1}{k}(1-(1-t)^{\sfrac12})$. Note that $R'\notin L^p((0,1))$ for any $p>2$, and thus $R$ does not satisfy (Hp~$8$) so that we can apply neither Theorem~\ref{t:hatf fixed} nor \ref{t:potdann fixed}. Assumption (Hp~$8$) ensures the continuity of the function $\phi$ on $[0,1]$, which indeed fails in this particular case as 
$\phi(t)= \frac{1}{k\pi}\cosh^{-1}\left(\frac{1}{(1-t)^{\sfrac12}}\right)$. 
Therefore, using the constructions in Theorem~\ref{t:hatf fixed} and \ref{t:potdann fixed} we would get that either $\hatf$ or $\PotDann$ is not in $C^0([0,1])$. On the other hand, the continuity of both $\hatf$ and $\PotDann$ is a crucial assumption 
in Theorem~\ref{t:finale} (cf. (Hp~$1$)-(Hp~$2$)). 

To overcome this problem, we slightly modify $g$ as follows:
let $\delta>0$ and $s_{\delta}\to \infty$ as $\delta\to 0^+$, we 
linearize $g'$ in $(s_{\delta},\infty)$ to obtain a new cohesive law $g_{\delta}$ given by
\begin{equation*}
g'_{\delta}(s):=
\begin{cases}
   e^{-ks} \; &\textup{if $s\in[0,s_{\delta}]$} \\
    (e^{-ks_{\delta}}-ke^{-ks_{\delta}}(s-s_{\delta}))\lor 0 \; &\textup{if $s\in(s_{\delta},\infty)$}.
\end{cases}
\end{equation*}
For every $\delta\in (0,1)$, $g_{\delta}$ satisfies (Hp~$6$)-(Hp~$7$) with 
 \begin{equation*}
       R_{\delta}(t)=
       \begin{cases}
          \frac{1}{k}(1-(1-t)^{\sfrac12}) \; & \textup{if $t\in[0,\hat s_{\delta}]$} \\
          \frac{1}{k}(1-(1-\hat s_{\delta})^{\sfrac12}) \; & \textup{if $ t\in(\hat s_{\delta},1]$},
       \end{cases}
    \end{equation*}
for some $\hat s_{\delta}$ such that $\hat s_{\delta}\to 1$ as $\delta \to 0^+$. Moreover, $R_{\delta}$ fulfills (Hp~$8$) and $g_{\delta}\to g$ uniformly and monotonically on $[0,\infty)$. In particular, one may choose $\hat s_{\delta}=1-\delta$ to obtain
\begin{equation*}
  \phi_{\delta}(t)=
  \begin{cases}
      \frac{1}{k\pi}\cosh^{-1}\left(\frac{1}{(1-t)^{\sfrac12}}\right)  & \hskip-1cm\textup{if $t\in[0,1-\delta]$} \\
      \frac{1}{k\pi}\left(\cosh^{-1}\left(\frac{1}{(1-t)^{\sfrac12}}\right)-\cosh^{-1}\left((\frac{\delta}{1-t})^{\sfrac12}\right)+\Big(\frac{\delta-(1-t)}{\delta}\Big)^{\sfrac12}\right) & \\
      & \hskip-1cm\textup{if $t\in(1-\delta,1]$}\,,
  \end{cases}
\end{equation*}
and hence get from Theorem~\ref{t:hatf fixed} for $\hatf(t)=t^2$ 
\begin{equation*}
            \omega_{\delta}(1-t)=
            \begin{cases}
                \frac{1}{k^2\pi^2}\left(\cosh^{-1}(\frac{1}{t})-\cosh^{-1}(\frac{\delta^{\sfrac12}}{t})+\left(\frac{\delta-t^2}{\delta}\right)^{\sfrac12}\right)^2 & \textup{if $t\in[t, \delta^{\sfrac12}]$} \\
                 \frac{1}{k^2\pi^2}(\cosh^{-1})^2(\frac{1}{t}) & \textup{if $t\in(\delta^{\sfrac12},1]$}. \\
            \end{cases}
        \end{equation*}
Let then $\mathcal{F}_{\eps,\delta}$ be defined as in \eqref{functeps} with 
$\hatf(t)=t^2$ and $Q=\omega=\omega_{\delta}$.
Theorem~\ref{t:finale} implies that ${\displaystyle{\Gamma\textup{-} \lim_{\eps \to 0}\mathcal{F}_{\eps,\delta}}}$ is a functional as 
in \eqref{F0} with $g_\delta$ as surface energy density, and diffuse energy densities independent from $\delta$.
Noting that $g_{\delta_2}\geq g_{\delta_1}$ for $0<\delta_1<\delta_2<1$,  \cite[Proposition~5.4]{Dalmaso1993} yields that
${\displaystyle{\Gamma \textup{-}\lim_{\delta \to 0}\left( \Gamma\textup{-} \lim_{\eps \to 0}\mathcal{F}_{\eps,\delta}\right)}}$ is a functional as in \eqref{F0} with surface energy density $g(s)=\frac{1}{k}(1-e^{-ks})$, $s\geq 0$.

In conclusion, we observe that $\omega_{\delta}$ coincides with the damage potential found in \cite[Section~4.2]{Feng2021} on $[\delta^{\sfrac{1}{2}},1]$ for every $\delta \in (0,1)$, using the identity $\cosh^{-1}(1/t)=\tanh^{-1}((1-t^2)^{\sfrac12})$.

\subsubsection{Logarithmic softening}
In this section we provide an application of Theorem~\ref{t:potdann fixed infinito} in order to approximate cohesive
laws with logarithmic behaviour for small jump amplitudes. Let $k\in (0,\infty)$ and consider
\begin{equation*}
        g'(s)=
        \begin{cases}
            -\log(ks) \; & \textup{if $s\in(0,1/k]$} \\
            0 \; & \textup{if $s\in(1/k,\infty)$},
        \end{cases}
    \end{equation*}
then
\begin{equation*}
        g(s)=
        \begin{cases}
           s(1-\log(ks))\; & \textup{if $s\in[0,\frac{1}{k}]$} \\
            \frac{1}{k} \; & \textup{if $s\in(\frac{1}{k},\infty)$}
        \end{cases}
    \end{equation*}
satisfies (Hp~$6'$)-(Hp~$7'$) and the corresponding function $R$ is given by
\begin{equation*}
   R'(t)=-\frac{1}{2k}e^{-t^{\sfrac12}}
\end{equation*}
for every $t\in [0,\infty)$. In particular, by \eqref{e:la phi della R infinito}, we have that
\begin{equation*}
    \phi(t)=\frac{1}{2k\pi}\int_t^{\infty}\frac{e^{-x^{\sfrac12}}}{(x-t)^{\sfrac12}}\dx
\end{equation*}
and from Theorem \ref{t:potdann fixed infinito}, if $\omega(t)=\frac{9}{16k}t$, then we obtain 
\begin{equation*}
    f^2(t):=\frac{\hatf(t)}{Q(1-t)}=\frac{16k\hatk(t)}{9(1-t)}
\end{equation*}
where 
\begin{equation*}
    \hatk^{-1}(t)=1-\left[k\int_t^{\infty}\frac{\phi(x)}{x^{\sfrac12}}\dx\right]^{\frac{2}{3}}.
\end{equation*}

\section*{Acknowledgments}
The second and third authors thanks Sergio Conti for useful conversations on the subject of the paper.

\section*{Data availability statement}

Data sharing not applicable to this article as no datasets were generated or analysed during the current study.

\section*{Declarations}
The authors have no relevant financial or non-financial interests to disclose.

The authors have no conflicts of interest to declare that are relevant to the content of this article.

All authors certify that they have no affiliations with or involvement in any organization or entity with any financial interest or non-financial interest in the subject matter or materials discussed in this manuscript.

The authors have no financial or proprietary interests in any material discussed in this article.

\section*{Funding}
The first author has been supported by the European Union - Next Generation EU, Mission 4 Component 1 CUP G53D23001140006, codice 20229BM9EL, PRIN2022 project: “NutShell - NUmerical modelling and opTimisation of SHELL Structures Against Fracture and Fatigue with Experimental Validations”.
The first author also acknowledges the Italian National Group of Mathematical Physics INdAM-GNFM.

The second and third authors have been supported by the European Union - Next Generation EU, Mission 4 Component 1 CUP 2022J4FYNJ, PRIN2022 project ``Variational methods for stationary and evolution problems with singularities and interfaces''. The second and third authors are members of GNAMPA - INdAM.


\bibliographystyle{alpha-noname}
\bibliography{%
  ./Research-Fracture-2023-Cohesive-Focardi-Colasanto-Alessi,%
  ./cfi}

\end{document}